\newcommand{\Mdef}[2]{\newcommand{#1}{\relax \ifmmode #2 \else $#2$\fi}}
\newcommand{\supp}{\mathrm{supp}}
\newcommand{\thick}{\mathrm{Thick}}
\newcommand{\loc}{\mathrm{Loc}}
\newcommand{\sm }{\wedge}
\newcommand{\tensor}{\otimes}
\newcommand{\Ext}{\mathrm{Ext}}
\Mdef{\bhom}{\mathbf{\hat{H}om}}
\Mdef{\Mod}{\mathrm{mod}}
\newcommand{\st}{\; | \;}
\newtheorem{thm}{Theorem}[section]
\newtheorem{lemma}[thm]{Lemma}
\newtheorem{prop}[thm]{Proposition}
\newtheorem{cor}[thm]{Corollary}
\theoremstyle{definition}
\newtheorem{defn}[thm]{Definition}
\newtheorem{cond}[thm]{Condition}
\newtheorem{example}[thm]{Example}
\newtheorem{remark}[thm]{Remark}
\newcommand{\qqed}{\qed \\[1ex]}
\renewenvironment{proof}[1][\hspace*{-.8ex}]{\noindent {\bf Proof #1:\;}}{\qqed}
\Mdef{\PH} {\Phi^H}
\Mdef{\PK} {\Phi^K}
\Mdef{\PL} {\Phi^L}
\Mdef{\PT} {\Phi^{\T}}
\Mdef{\ef}{E{\cF}_+}
\Mdef{\etf}{\widetilde{E}{\cF}}
\Mdef{\eg}{E{G}_+}
\Mdef{\etg}{\tilde{E}{G}}
\newcommand{\etp}{\tilde{E}\cP}
\newcommand{\piA}{\pi^{\cA}}
\Mdef{\infl}{\mathrm{inf}}
\Mdef{\defl}{\mathrm{def}}
\Mdef{\res}{\mathrm{res}}
\Mdef{\ind}{\mathrm{ind}}
\Mdef{\coind}{\mathrm{coind}}
\Mdef{\univ}{\mathcal{U}}
\Mdef{\Fp}{\mathbb{F}_p}
\Mdef{\Zpinfty}{\Z /p^{\infty}}
\Mdef{\Zpadic}{\Z_p^{\wedge}}
\newcommand{\bi}{\begin{itemize}}
\newcommand{\be}{\begin{enumerate}}
\newcommand{\bc}{\begin{center}}
\newcommand{\bd}{\begin{description}}
\newcommand{\ei}{\end{itemize}}
\newcommand{\ee}{\end{enumerate}}
\newcommand{\ec}{\end{center}}
\newcommand{\ed}{\end{description}}
\newcommand{\lra}{\longrightarrow}
\newcommand{\Gspectra}{\mbox{$G$-{\bf spectra}}}
\newcommand{\spec}{\mathrm{Spec}}
\newcommand{\spc}{\mathrm{Spc}}
\Mdef{\we}{\mathbf{we}}
\Mdef{\fib}{\mathbf{fib}}
\Mdef{\cof}{\mathbf{cof}}
\Mdef{\BI}{\mathcal{BI}}
\newcommand{\colim}{\mathop{  \mathop{\mathrm {lim}} \limits_\rightarrow} \nolimits}
\newcommand{\hocolim}{\mathop{  \mathop{\mathrm {holim}}\limits_\rightarrow} \nolimits}
\Mdef{\B}{\mathbb{B}}
\Mdef{\C}{\mathbb{C}}
\Mdef{\D}{\mathbb{D}}
\Mdef{\E}{\mathbb{E}}
\Mdef{\T}{\mathbb{T}}
\Mdef{\F}{\mathbb{F}}
\Mdef{\G}{\mathbb{G}}
\Mdef{\I}{\mathbb{I}}
\Mdef{\N}{\mathbb{N}}
\Mdef{\Q}{\mathbb{Q}}
\Mdef{\R}{\mathbb{R}}
\Mdef{\bbS}{\mathbb{S}}
\Mdef{\Z}{\mathbb{Z}}
\Mdef{\bA}{\mathbb{A}}
\Mdef{\bB}{\mathbb{B}}
\Mdef{\bC}{\mathbb{C}}
\Mdef{\bD}{\mathbb{D}}
\Mdef{\bE}{\mathbb{E}}
\Mdef{\bF}{\mathbb{F}}
\Mdef{\bG}{\mathbb{G}}
\Mdef{\bH}{\mathbb{H}}
\Mdef{\bI}{\mathbb{I}}
\Mdef{\bJ}{\mathbb{J}}
\Mdef{\bK}{\mathbb{K}}
\Mdef{\bL}{\mathbb{L}}
\Mdef{\bM}{\mathbb{M}}
\Mdef{\bN}{\mathbb{N}}
\Mdef{\bO}{\mathbb{O}}
\Mdef{\bP}{\mathbb{P}}
\Mdef{\bQ}{\mathbb{Q}}
\Mdef{\bR}{\mathbb{R}}
\Mdef{\bS}{\mathbb{S}}
\Mdef{\bT}{\mathbb{T}}
\Mdef{\bU}{\mathbb{U}}
\Mdef{\bV}{\mathbb{V}}
\Mdef{\bW}{\mathbb{W}}
\Mdef{\bX}{\mathbb{X}}
\Mdef{\bY}{\mathbb{Y}}
\Mdef{\bZ}{\mathbb{Z}}
\Mdef{\cA}{\mathcal{A}}
\Mdef{\cB}{\mathcal{B}}
\Mdef{\cC}{\mathcal{C}}
\Mdef{\mcD}{\mathcal{D}} 
\Mdef{\cE}{\mathcal{E}}
\Mdef{\cF}{\mathcal{F}}
\Mdef{\cG}{\mathcal{G}}
\Mdef{\mcH}{\mathcal{H}} 
\Mdef{\cI}{\mathcal{I}}
\Mdef{\cJ}{\mathcal{J}}
\Mdef{\cK}{\mathcal{K}}
\Mdef{\mcL}{\mathcal{L}}
\Mdef{\cM}{\mathcal{M}}
\Mdef{\cN}{\mathcal{N}}
\Mdef{\cO}{\mathcal{O}}
\Mdef{\cP}{\mathcal{P}}
\Mdef{\cQ}{\mathcal{Q}}
\Mdef{\mcR}{\mathcal{R}}
\Mdef{\cS}{\mathcal{S}}
\Mdef{\cT}{\mathcal{T}}
\Mdef{\cU}{\mathcal{U}}
\Mdef{\cV}{\mathcal{V}}
\Mdef{\cW}{\mathcal{W}}
\Mdef{\cX}{\mathcal{X}}
\Mdef{\cY}{\mathcal{Y}}
\Mdef{\cZ}{\mathcal{Z}}
\Mdef{\ca}{\mathcal{a}}
\Mdef{\ct}{\mathcal{t}}
\Mdef{\At}{\tilde{A}}
\Mdef{\Bt}{\tilde{B}}
\Mdef{\Ct}{\tilde{C}}
\Mdef{\Et}{\tilde{E}}
\Mdef{\Ht}{\tilde{H}}
\Mdef{\Kt}{\tilde{K}}
\Mdef{\Lt}{\tilde{L}}
\Mdef{\Mt}{\tilde{M}}
\Mdef{\Nt}{\tilde{N}}
\Mdef{\Pt}{\tilde{P}}
\Mdef{\tA}{\tilde{A}}
\Mdef{\tB}{\tilde{B}}
\Mdef{\tC}{\tilde{C}}
\Mdef{\tE}{\tilde{E}}
\Mdef{\tH}{\tilde{H}}
\Mdef{\tK}{\tilde{K}}
\Mdef{\tL}{\tilde{L}}
\Mdef{\tM}{\tilde{M}}
\Mdef{\tN}{\tilde{N}}
\Mdef{\tP}{\tilde{P}}
\Mdef{\ft}{\tilde{f}}
\Mdef{\xt}{\tilde{x}}
\Mdef{\yt}{\tilde{y}}
\Mdef{\Ab}{\overline{A}}
\Mdef{\Bb}{\overline{B}}
\Mdef{\Cb}{\overline{C}}
\Mdef{\Db}{\overline{D}}
\Mdef{\Eb}{\overline{E}}
\Mdef{\Fb}{\overline{F}}
\Mdef{\Gb}{\overline{G}}
\Mdef{\Hb}{\overline{H}}
\Mdef{\Ib}{\overline{I}}
\Mdef{\Jb}{\overline{J}}
\Mdef{\Kb}{\overline{K}}
\Mdef{\Lb}{\overline{L}}
\Mdef{\Mb}{\overline{M}}
\Mdef{\Nb}{\overline{N}}
\Mdef{\Ob}{\overline{O}}
\Mdef{\Pb}{\overline{P}}
\Mdef{\Qb}{\overline{Q}}
\Mdef{\Rb}{\overline{R}}
\Mdef{\Sb}{\overline{S}}
\Mdef{\Tb}{\overline{T}}
\Mdef{\Ub}{\overline{U}}
\Mdef{\Vb}{\overline{V}}
\Mdef{\Wb}{\overline{W}}
\Mdef{\Xb}{\overline{X}}
\Mdef{\Yb}{\overline{Y}}
\Mdef{\Zb}{\overline{Z}}
\Mdef{\db}{\overline{d}}
\Mdef{\hb}{\overline{h}}
\Mdef{\qb}{\overline{q}}
\Mdef{\rb}{\overline{r}}
\Mdef{\tb}{\overline{t}}
\Mdef{\ub}{\overline{u}}
\Mdef{\vb}{\overline{v}}
\Mdef{\hc}{\hat{c}}
\Mdef{\he}{\hat{e}}
\Mdef{\hf}{\hat{f}}
\Mdef{\hA}{\hat{A}}
\Mdef{\hH}{\hat{H}}
\Mdef{\hJ}{\hat{J}}
\Mdef{\hM}{\hat{M}}
\Mdef{\hP}{\hat{P}}
\Mdef{\hQ}{\hat{Q}}
\Mdef{\thetab}{\overline{\theta}}
\Mdef{\phib}{\overline{\phi}}
\Mdef{\uA}{\underline{A}}
\Mdef{\uB}{\underline{B}}
\Mdef{\uC}{\underline{C}}
\Mdef{\uD}{\underline{D}}
\Mdef{\bolda}{\mathbf{a}}
\Mdef{\boldb}{\mathbf{b}}
\Mdef{\bfD}{\mathbf{D}}
\Mdef{\fm}{\frak{m}}
\Mdef{\fp}{\frak{p}}
\newcommand{\fX}{\mathfrak{X}}
\Mdef{\eps}{\epsilon}
\newcommand{\Gspec}{\mbox{$\mathbf{G}${\bf -spectra}}}
\newcommand{\Gspecc}{\mbox{$\mathbf{G}${\bf -spectra}}^c}
\newcommand{\GspecK}{\mbox{$\mathbf{G}${\bf -spectra}}\mathbf{\langle   K\rangle}}
\newcommand{\NspecK}{\mbox{$\mathbf{N}${\bf -spectra}}\mathbf{\langle K\rangle}}
\newcommand{\freeGspec}{\mbox{{\bf free-}$G${\bf -spectra}}}
\newcommand{\freeWspec}{\mbox{{\bf free-}$\mathbf{W}${\bf -spectra}}}
\newcommand{\loct}{\mathrm{Loc}_{\otimes}}
\newcommand{\thickt}{\mathrm{Thick}_{\otimes}}
\newcommand{\Ig}{\mathcal{I}_g}
\newcommand{\Iun}{\mathcal{I}_{un}}
\newcommand{\elr}[1]{E\langle #1 \rangle}
\newcommand{\sub}{\mathrm{Sub}}
\newcommand{\Lcl}{\Lambda_{cl}}
\newcommand{\Lct}{\Lambda_{ct}}
\newcommand{\torsHBGWmod}{\mbox{tors-$H^*(BG_e)[G_d]$-mod}}
\newcommand{\ctmax}{\mathrm{max}_{ct}}
\begin{document}
\title{The Balmer spectrum of rational
  equivariant cohomology theories}

\author{J.P.C.Greenlees}
\address{School of Mathematics and Statistics, Hicks Building, 
Sheffield S3 7RH. UK.}
\email{j.greenlees@sheffield.ac.uk}
\date{}

\begin{abstract}
The category of rational $G$-equivariant cohomology theories for a
compact Lie group $G$ is the homotopy category of rational $G$-spectra
and therefore tensor-triangulated. We show that its Balmer spectrum is
the set of conjugacy classes of closed subgroups of $G$, with the topology
corresponding to the topological poset  of \cite{ratmack}. This is used to
classify  the collections of subgroups arising as the geometric
isotropy of finite $G$-spectra. The ingredients for this
classification are (i) the algebraic
model of free spectra of the author and B.Shipley \cite{gfreeq2}, (ii) the Localization
Theorem of Borel-Hsiang-Quillen and (iii) tom Dieck's calculation of
the rational Burnside ring \cite{tD}. 
\end{abstract}

\thanks{I am grateful to P.Balmer and B.Sanders for
  conversations about this project, and especially to H.Krause, whose
  five-minute summary of the theory of support at the end of the
  2017 Copenhagen Workshop led to my enlightenment. Thanks also to 
G. Stevenson for the proof of the tensor version of the Thomason
Localization Theorem and to M.Kedziorek for comments on an earlier version.  }
\maketitle

\tableofcontents

\section{Introduction}
\subsection{Context}
This paper relates the general structure of the category of 
 $G$-equivariant cohomology theories for a compact Lie group $G$ to 
the structure of the Lie group $G$. To start with, the category is tensor-triangulated, since it is the
homotopy category of the monoidal model category of $G$-spectra. To see the broad
features of this category we restrict attention to cohomology theories
whose values are rational vector spaces:  a $G$-equivariant cohomology theory with
values in rational vector spaces  is represented by a $G$-spectrum
with rational homotopy groups. 

The crudest structural features of this category are reflected in the localizing
subcategories and the thick subcategories of compact objects. These in
turn are based on an understanding of the Balmer spectrum. We show
that the Balmer primes are in bijective correspondence with conjugacy
classes of  closed
subgroups of $G$, and we identify both   the containments amongst
them  and the topology. These correspond precisely to  the structures identified in
\cite{ratmack}.

It turns out that to prove these results we only need an
understanding of free spectra for various groups, as described in
\cite{gfreeq2}. Accordingly the arguments presented here are clearer
and more elementary than those presented in \cite{spctq} for tori, and
simultaneously give results for all compact Lie groups $G$. The
present paper renders \cite{spctq} obsolete.

In retrospect this result gives an intrinsic justification for the
form of the full algebraic models of \cite{s1q, tnq1, tnq2, tnqcore,
  s1q, so3q}, and suggest a revisionist approach to the general
project of giving an algebraic model for rational $G$-spectra. 

\subsection{Tensor triangulated categories}
We recall some standard terminology from the study of 
tensor-triangulated categories (tt-categories) and  the basic
definitions from \cite{Balmer1}. 

If $\C$ is a tensor triangulated category, an object $T$ is called  {\em small}, {\em finite} or {\em 
compact} if  for any set of objects $Y_i$, the natural map 
$$\bigoplus_i [T,Y_i]\stackrel{\cong}\lra [T, \bigvee_i Y_i]$$
is an isomorphism (where $[A,B]$ denotes the $\cC$-morphisms from $A$
to $B$). We write $\C^c$ for the tensor triangulated subcategory of
compact objects.  The word `compact' has become unavoidable by virtue of the 
superscript $c$ that it engendered, but we will often use the word 
`finite' since it is suggestive of a finite $CW$-complex. 

We say that  a full subcategory $A$ of $\C$
is {\em thick} if it is closed under completing triangles and taking retracts. 
  It is
{\em localizing } if it is closed under completing triangles and
taking arbitrary
coproducts (it is then automatically closed under retracts as
well).  We say that $A$ is an {\em ideal} if it is closed under
triangles and tensoring with an arbitrary element. 

For a general subcategory $B$ we write $\thick(B)$ for the
thick subcategory generated by $B$  and $\thickt(B)$ for the
thick tensor ideal generated by $B$. The latter depends on the ambient
category, and we will only write $\thickt(B)$ in the category $\C^c$ of
compact objects (so $B$ is compact, and only tensor products with
compact objects are permitted).  We write $\loc(B)$ for the 
localizing subcategory generated by $B$, and
$\loct(B)$ for the  localizing tensor ideal generated by $B$; because
an infinite coproduct of compact objects will usually not be compact, 
 this only makes sense for the full category $\C$ and tensor products with 
arbitarary objects of $\C$ are permitted. 
 
We will generally be interested in thick and localizing tensor
ideals, because without closure under tensor products  the structure
is hard to understand. We will give an example to illustrate
this in a special case in Section \ref{sec:semifree}.

\begin{defn}
A {\em prime ideal} in a tensor triangulated category is a thick
proper tensor ideal $\wp$ with the
property that $a\tensor b\in \wp$ implies that $a$ or $b$ is in $\wp$. 

The {\em Balmer spectrum} of a tensor-triangulated category $\C $ is
the set of prime tensor ideals of the triangulated category of compact objects: 
$$\spc (\C)=\{ \wp \subseteq \C^c \st \wp \mbox{  is prime } \}.$$ 

We may use this to define the {\em support} of a compact object:
$$\supp (X)=\{ \wp \in \spc(\C)\st X\not\in \wp\}. $$
This in turn lets us define the  Zariski topology on $\spc (\C)$ as 
generated by the closed sets $\supp (X)$ as  $X$ runs through compact objects of $\C$. 
\end{defn}

\begin{example}
The motivating example is that if $\C =D(R)$ is the derived category of a
commutative Noetherian ring $R$ then there is a natural homeomorphism
$$\spec (R)\stackrel{\cong}\lra \spc (D(R))$$
where the classical algebraic prime $\wp_a$ corresponds to the Balmer
prime $\wp_b=\{ M \st M_{\wp_a}\simeq 0\}$. To avoid disorientation it
is essential to emphasize that   this is order-reversing, so that maximal algebraic primes
correspond to minimal Balmer primes; either way these are the closed
points. 
\end{example}

\subsection{Transformation groups}

Our classification is in effect  in terms of traditional
invariants of transformation groups, namely fixed points and Borel
cohomology.  

If $A$ is a based $G$-space and $K$ is a subgroup of $G$, the fixed point
space $A^K$ admits an action of the Weyl group $W_G(K)=N_G(K)/K$ of $K$.  
We will make constant use of the extension of this functor to
$G$-spectra, which is the $K$-geometric fixed point functor
$\Phi^K$. It is an extension in the sense that 
$\Phi^K(\Sigma^{\infty}A)\simeq \Sigma^{\infty}(A^K)$. We will
generally  omit notation for the suspension spectrum, and accordingly 
write $\Phi^KA$ for the fixed point space as well as the associated
suspension spectrum. 

The functor $\Phi^K$ has other familiar properties in that it is a
tensor triangulated functor: it preserves triangles and 
$\Phi^K(X\sm Y)\simeq \Phi^KX\sm \Phi^KY$.

This is used to define the {\em geometric isotropy}\footnote{
In \cite{assiet} and the author's subsequent work this was called {\em
 stable isotropy} to distinguish it from the usual unstable notion. The corresponding notion for
categorical fixed points does not seem to be useful, so this caused no
confusion. 

The name of `geometric isotropy' from \cite{HHR} seems to have
acquired currency, and the symmetry between `stable' and `unstable'
does not seem sufficient to overturn this advantage. 
}
of a $G$-spectrum $X$:
$$\Ig (X)=\{ K\st \Phi^KX\not \simeq_1 0\} $$
is the collection of closed subgroups $K$ for which the geometric fixed 
points $\Phi^KX$ are non-equivariantly essential.

The geometric isotropy is an excellent way to organize our
understanding of $G$-spectra. In particular,  a (homotopically) {\em free}
$G$-spectrum is one which is either contractible or has geometric isotropy 
$\{ 1\}$.

We are especially interested in the category of rational equivariant
cohomology theories. Each rational equivariant cohomology theory 
$E_G^*(\cdot)$ is represented in the sense that there is a rational
$G$-spectrum $E$ so that for any based
$G$-space $X$, 
$$E_G^*(X)=[X,E]_G^*. $$
 More precisely, there is a stable
symmetric monoidal model category of rational $G$-spectra.  Its homotopy
category $\Gspec$ is tensor triangulated, and equivalent to the
category of rational equivariant cohomology theories and  stable natural
transformations. We will work throughout at the level of tensor
triangulated categories. 

\subsection{The Balmer spectrum}

There are some obvious primes in the category of $G$-spectra: for any
closed subgroup $K$ of $G$, we take
$$\wp_K=\{ X\st \Phi^KX\simeq_1 0 \}. $$
To see this is prime we note that $0$ is a prime in homotopy category
of finite rational spectra
(since that is equivalent to the derived category of $\Q$-modules) and 
$$\wp_K=(\Phi^K)^*((0)) \mbox{ where } \Phi^K: \Gspec \lra
\mbox{\bf spectra}. $$
This gives a prime for each closed subgroups $K$ of $G$, and 
conjugate subgroups give the same primes. In fact this gives all
primes, and we may describe the containments between them.

 We say that $L$ is
{\em cotoral} in $K$ if $L$ is a normal subgroup of $K$ and $K/L$ is a
torus. 

\begin{thm} 
\label{thm:Balmerposet}
The Balmer spectrum of prime thick tensor ideals  in the category of
finite rational $G$-spectra  is in bijective correspondence
to the closed subgroups of $G$. Containment corresponds to cotoral
inclusion:
$$\wp_K\subseteq \wp_H \mbox{ if and only if } K \mbox{ is conjugate
  to a subgroup cotoral in } H.$$
The Zariski topology of $\spc (\Gspec)$ is the Zariski topology on the $f$-topology from \cite{ratmack}. 
\end{thm}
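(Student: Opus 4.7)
The plan is to combine three inputs via Balmer's formalism: tom Dieck's calculation of the rational Burnside ring $A(G)_{\Q}$, the Borel--Hsiang--Quillen localization theorem, and the algebraic model of free $G$-spectra from \cite{gfreeq2}.

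I would first handle the easier directions.  Distinctness of the $\wp_K$ for non-conjugate $K$ follows from tom Dieck's idempotent decomposition of $A(G)_{\Q}$: it produces finite rational $G$-spectra with geometric isotropy concentrated at any prescribed conjugacy class.  For cotoral containment, if $L$ is normal in $K$ with $K/L$ a torus then $\PK X\simeq \Phi^{K/L}(\PL X)$ (with $K/L\subseteq W_G(L)$ acting on $\PL X$), so $\wp_L\subseteq \wp_K$ immediately.

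The reverse containment uses Borel--Hsiang--Quillen.  Given $L$ \emph{not} conjugate to any subgroup cotoral in $K$, I need to produce a finite $X$ with $\PK X\not\simeq 0$ and $\PL X\simeq 0$.  Rationally, Borel--Hsiang--Quillen forces a finite $T$-spectrum (with $T$ a torus) whose geometric $T$-fixed points are nonzero to have nonzero geometric fixed points at every cotoral subgroup of $T$; dualising this and combining with family spectra $\etf$ and induction from the relevant subgroup of $G$ gives the required separating spectrum.

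The main step is to show that every Balmer prime has the form $\wp_K$.  The plan is to factor the analysis through the Balmer comparison map
$$\rho\colon \spc(\Gspecc)\lra \spec\bigl(\pi_0^G\, S^0\bigr)=\spec(A(G)_{\Q}).$$
By tom Dieck, the target is, as a set, the collection of conjugacy classes of closed subgroups of $G$, with Zariski topology precisely the $f$-topology of \cite{ratmack}.  The family $\{\wp_K\}$ surjects onto this target, so it is injectivity of $\rho$ that remains.  To analyse the fibre over $(K)$, I would invert a tom Dieck idempotent isolating an $f$-neighbourhood of $(K)$ and then pass to $K$-geometric fixed points; this localises the fibre to the Balmer spectrum of the tt-category of $W_G(K)$-free spectra, which the algebraic model of \cite{gfreeq2} controls tightly enough to compute as a single point.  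For the topology one then checks in both directions: supports of finite $G$-spectra are $f$-closed by the cotoral specialisation already established, and the tom Dieck idempotents realise every $f$-clopen subset as the support of a finite $G$-spectrum.

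The main obstacle is the injectivity of $\rho$ over a given conjugacy class $(K)$: the reduction to $W_G(K)$-free spectra is formal, but one must then extract the Balmer spectrum from the algebraic model of \cite{gfreeq2}, which is a genuinely nontrivial use of that input and is where the hypotheses on compact Lie groups pay their way.
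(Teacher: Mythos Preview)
There are two genuine gaps.

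First, and most seriously, your identification of $\spec(A(G)_{\Q})$ is wrong. tom Dieck's theorem gives $A(G)_{\Q}\cong C(\Phi G,\Q)$, where $\Phi G$ is the space of conjugacy classes of subgroups with \emph{finite} Weyl group, so $\spec(A(G)_{\Q})\cong \Phi G$ and not all of $\sub(G)/G$. Already for $G=\T$ the circle one has $\Phi\T=\{\T\}$, a single point, whereas $\spc(\Gspecc)$ must contain $\wp_K$ for every closed $K\subseteq\T$. The comparison map $\rho$ therefore collapses all of these to one point and cannot be injective; your ``fibre over $(K)$'' is not well-posed, since the actual fibre over a point of $\Phi G$ contains the primes for an entire cotoral chain, not a single conjugacy class. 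The paper avoids $\rho$ altogether: it first shows each $\GspecK$ is a \emph{minimal} localizing tensor ideal (via the free-spectra model of \cite{gfreeq2}), classifies all localizing tensor ideals by geometric isotropy, and then descends to thick tensor ideals and primes using Thomason's Localization Theorem.

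Second, you have the two containment directions reversed. The factorisation $\Phi^K X\simeq \Phi^{K/L}(\Phi^L X)$ does \emph{not} make $\wp_L\subseteq\wp_K$ immediate: the hypothesis $\Phi^L X\simeq_1 0$ is only non-equivariant and says nothing about geometric $K/L$-fixed points of the $K/L$-spectrum $\Phi^L X$ (take $X=\tilde{E}\T$, $K=\T$, $L=1$: then $\Phi^1 X\simeq_1 0$ but $\Phi^{\T}X\simeq S^0$). For \emph{finite} $X$ this implication is precisely what Borel--Hsiang--Quillen supplies, and the paper uses it that way. Conversely, producing a finite $X$ with $K\in\Ig(X)$ and $L\notin\Ig(X)$ when $L$ is not cotorally below $K$ is not a Localization argument at all; it is achieved with the tom Dieck idempotents, and the paper's basic cells $\sigma_{K,U}$ are constructed for exactly this purpose.
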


The $f$-topology and the Zariski topology it generates will be
explained in Section \ref{sec:top}, where the proof will also be completed. For now we just remark
that  the fact that $\wp_K\subseteq \wp_H$ if $K$ is cotoral in $H$ comes 
from the classical Borel-Hsiang-Quillen Localization Theorem. The
reverse implication comes from tom Dieck's calculation of the rational 
Burnside ring. 
It  is a remarkable vindication of the Balmer spectrum that  it  captures the space of subgroups
and  cotoral inclusions,  and even  the $f$-topology of \cite{ratmack}. This can be put 
down to the fact that both the Balmer spectrum and  the analysis of
rational $G$-spectra are  principally based on the Localization theorem and the
calculation of the rational Burnside ring. 

\begin{remark}
In the light of Theorem \ref{thm:Balmerposet}, for any finite spectrum $X$, the support in the sense of Balmer for
this set of primes coincides with the geometric isotropy:
$$\supp (X)=\{ H \st X\not \in \wp_H \}=\{ H \st \Phi^HX \not  \simeq_1
0\}=\Ig (X). $$
\end{remark}

\subsection{Classification of thick tensor ideals}

Continuing with finite spectra,  we classify the
finitely generated thick tensor ideals in $\Gspectra$.

\begin{thm} 
(i) If $X$ is a finite rational $G$-spectrum then 
then $\Ig (X)$ is closed under passage to cotoral
subgroups and its space of cotorally maximal elements is open and compact in
the $f$-topology. 

(ii) Any set of subgroups which is closed under cotoral
specialization and whose set of cotorally maximal elements is open and
compact in the $f$-topology
occurs as $\Ig (X)$ for some finite rational $G$-spectrum $X$.

(iii) If $X$ and $Y$ are finite rational $G$-spectra with
$\Ig (Y)\subseteq \Ig (Y)$ then $Y$ is in the thick tensor ideal
generated by $X$. 
\end{thm}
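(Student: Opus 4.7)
All three parts fit into a single framework: apply the Balmer--Thomason classification of (radical) thick tensor ideals to the Balmer spectrum identified in Theorem \ref{thm:Balmerposet}. Parts (i) and (iii) fall out formally from Balmer's machinery, while (ii) houses the only real construction.

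For (i): The Remark immediately following Theorem \ref{thm:Balmerposet} gives $\Ig(X)=\supp(X)$. Every Balmer support is closed under specialization in $\spc(\C)$, and by Theorem \ref{thm:Balmerposet} specialization in $\spc(\Gspec)$ is exactly passage to a cotoral subgroup: $\wp_L\subseteq \wp_K$ iff $L$ is cotoral in $K$. This gives the cotoral closure of $\Ig(X)$. Since $X$ is compact, Balmer further guarantees that $\spc(\Gspec)\setminus\supp(X)$ is a quasi-compact open in the Zariski topology. Using the identification of this Zariski topology with the one associated to the $f$-topology of \cite{ratmack} in Theorem \ref{thm:Balmerposet}, and noting that the cotorally maximal elements of $\Ig(X)$ are precisely the maximal primes of $\spc(\Gspec)$ lying in $\supp(X)$, the quasi-compact-open condition unwinds to the claimed $f$-open and $f$-compact condition on the cotorally maximal elements.

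For (ii): I must realize any admissible $V$ as $\Ig(X)$ for some finite $X$. By $f$-compactness of the maximal part, $V$ is determined by a finite cover of its cotorally-maximal locus by basic $f$-open-compact pieces; hence, using cotoral closure, it suffices to realize the cotoral closure of a single basic piece and then wedge. For a single conjugacy class $(K)$ the realizer should be assembled from $G/N_G(K)_+$ together with smash products of cofibers of Euler classes $e(V_i)\colon S^0\to S^{V_i}$, where the representations $V_i$ of $N_G(K)$ are chosen so that $\Phi^K$ remains non-trivial while $\Phi^L$ vanishes for every proper cotoral extension $L$ of $K$. The tool enabling this engineering is the algebraic model for free spectra over the Weyl group $W_G(K)$ from \cite{gfreeq2}, which controls exactly what geometric-fixed-point behaviour one is allowed to prescribe once the ambient cotoral extensions have been killed. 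I expect this representation-theoretic bookkeeping --- hitting a prescribed open-compact piece of cotorally-maximal subgroups and no more --- to be the principal obstacle.

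For (iii): With (i) and (ii) in hand, Balmer's classification provides an order-preserving bijection between thick tensor ideals of $(\Gspec)^c$ and the admissible subsets $V$, given by $\cI\mapsto\bigcup_{Z\in\cI}\Ig(Z)$ with inverse $V\mapsto\{Z:\Ig(Z)\subseteq V\}$. Applied to $V=\Ig(X)$, the thick tensor ideal generated by $X$ equals $\{Z:\Ig(Z)\subseteq\Ig(X)\}$, which contains $Y$ whenever $\Ig(Y)\subseteq\Ig(X)$, giving (iii).
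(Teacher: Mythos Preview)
Your argument for (i) is circular in the paper's logical order. You invoke the identification of the Zariski topology on $\spc(\Gspec)$ with the $zf$-topology from Theorem \ref{thm:Balmerposet}, but in the paper that identification is proved in Section \ref{sec:top} \emph{using} Theorem \ref{thm:ctmaxcomp}, which is precisely the content of parts (i) and (ii) here. The paper instead proves openness of $\ctmax(\Ig(X))$ directly: for a finite $K$-CW-complex $Z$ and subgroups $L_i\to K$, the Montgomery--Zippin theorem forces $\Phi^{L_i}Z=\Phi^KZ$ for $i$ large. Compactness is obtained by combining the classification of localizing tensor ideals (Theorem \ref{thm:loctideals}) with smallness of $X$: one has $X\in\loct\{\sigma_{K,U_K}\}$, hence $X\in\thickt$ of finitely many $\sigma_{K_i,U_i}$, and the sets $(K_i,U_i)$ cover $\ctmax(\Ig(X))$. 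The cotoral closure itself is Proposition \ref{prop:Igctclosed}, which like your argument rests on the Borel--Hsiang--Quillen Localization Theorem.

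Your plan for (ii) is on the wrong track. The paper does not build realizers from Euler classes of representations or from the algebraic model of \cite{gfreeq2}; that model is used only to establish minimality of the localizing subcategories $\GspecK$. The realizers are the \emph{basic cells} $\sigma_{K,U}=G_+\sm_K e^K_U S^0$, where $e^K_U$ is the Burnside-ring idempotent supported on a clopen neighbourhood $U$ of $K$ in $\Phi K$ (tom Dieck). Lemma \ref{lem:gisigmaK} computes $\Ig(\sigma_{K,U})=\Lct(U_G)$ exactly, so a finite wedge of basic cells realizes any admissible set, with a small reduction to remove redundant cotoral relations among the $(K_i,U_i)$.

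Your (iii) is correct. The paper takes the equivalent but more direct route: $\Ig(Y)\subseteq\Ig(X)$ gives $Y\in\loct(X)$ by Theorem \ref{thm:loctideals}, and then Thomason's Localization Theorem (Theorem \ref{thm:TLT}) yields $Y\in\thickt(X)$, bypassing the full Balmer classification.
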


One of the things coming out of this is the importance of `basic cells'. First, we note that 
rationally the {\em natural} cells $G/K_+$ are often decomposable, and
are finite wedges of certain {\em basic} cells
(\cite{tnq1}, see Subsection \ref{subsec:basic}).  In one sense these
are embodiments of the topology on the space of conjugacy classes, and
they can be used as the basis for a theory of cell complexes and weak
equivalences. 

\subsection{Classification of localizing tensor ideals}
In fact the strategy of proof is to begin by considering infinite
spectra and then deduce how finite spectra behave,  using Thomason's Localization 
Theorem (recorded here as \ref{thm:TLT}).

\begin{thm}
\label{thm:loct}
The localizing tensor ideals of $\Gspec$ are in bijective
correspondence with arbitrary collections of conjugacy classes of closed subgroups of $G$. The localizing
tensor ideal corresponding to a collection $\mcH$ of subgroups closed
under conjugacy is 
$$\Gspec \langle \mcH\rangle =\{ X \st \Ig (X)\subseteq \mcH\}. $$
\end{thm}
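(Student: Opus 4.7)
The plan is to establish the bijection via the mutually inverse assignments $\mcH \mapsto \Gspec\langle \mcH\rangle$ and $\cL \mapsto \mcH(\cL) := \bigcup_{X \in \cL}\Ig(X)$. The first step is to verify that $\Gspec\langle \mcH\rangle$ is a localizing tensor ideal for every conjugation-closed $\mcH$. Writing it as the intersection $\bigcap_{(L)\notin\mcH}\ker(\Phi^L)$ and using that each $\Phi^L$ is triangulated, coproduct-preserving and strongly monoidal shows each kernel is a localizing tensor ideal, and intersections of such are again localizing tensor ideals.

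The second step is to manufacture, for each conjugacy class $(K)$, a basic cell $\sigma_K := G/K_+ \wedge \tilde{E}\cF[\not\supseteq K]$, where $\cF[\not\supseteq K]$ is the family of closed subgroups not containing a conjugate of $K$. The fixed-point computations---$\Phi^L(G/K_+)$ is nonzero precisely when $L$ is subconjugate to $K$, and $\Phi^L \tilde{E}\cF[\not\supseteq K]$ is nonzero precisely when $L$ contains a conjugate of $K$---combined with the compact Lie group fact that $gKg^{-1}\subseteq K$ forces equality, yield $\Ig(\sigma_K)=\{(K)\}$. Since $\sigma_K\in\Gspec\langle \mcH\rangle$ if and only if $(K)\in\mcH$, the collection $\mcH$ is recovered as $\{(K) : \sigma_K\in\Gspec\langle \mcH\rangle\}$, proving injectivity of $\mcH\mapsto\Gspec\langle \mcH\rangle$.

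The substantive step is to show $\Gspec\langle \mcH(\cL)\rangle\subseteq\cL$ for an arbitrary localizing tensor ideal $\cL$. This reduces to two subclaims: (a) for each $(K)\in\mcH(\cL)$ the basic cell $\sigma_K$ lies in $\cL$; and (b) every $X$ with $\Ig(X)\subseteq\mcH$ lies in $\loct(\sigma_K : (K)\in\mcH)$. For (a), pick $Y\in\cL$ with $\Phi^K Y\not\simeq 0$; then $\sigma_K\wedge Y\in\cL$ has geometric isotropy $\{(K)\}$, so through $\Phi^K$ it corresponds to a nonzero object in the algebraic model of free $W_G(K)$-spectra of \cite{gfreeq2}. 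The rational Burnside idempotent at $(K)$ from tom Dieck's splitting \cite{tD} then exhibits $\sigma_K$ as a retract built by tensor operations allowed inside $\cL$ from $\sigma_K\wedge Y$. For (b), stratify $X$ along the cotoral order of Theorem \ref{thm:Balmerposet}, using cofibre sequences of the form $E\cF_+\wedge X\to X\to \tilde{E}\cF\wedge X$ for shrinking families $\cF$ and the Borel--Hsiang--Quillen Localization Theorem to exhibit $X$ as assembled from basic cells $\sigma_K$ with $(K)\in\Ig(X)\subseteq\mcH$.

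I expect (a) to be the main obstacle. A localizing tensor ideal of the single-stratum subcategory $\Gspec\langle\{(K)\}\rangle$ viewed in isolation is typically much finer than a localizing tensor ideal of $\Gspec$ contained in that stratum, and one must genuinely exploit the external $\Gspec$-tensor action---together with the Burnside idempotent decomposition and the rigidity of the free-spectrum algebraic model---to force $\sigma_K$ into $\cL$ from the mere existence of some $Y\in\cL$ with $\Phi^K Y\not\simeq 0$.
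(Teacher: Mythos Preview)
Your overall strategy matches the paper's: reduce to showing each single-isotropy stratum $\GspecK$ is a \emph{minimal} localizing tensor ideal (your step (a)), and show these strata jointly generate (your step (b)). You correctly place the weight on (a) and correctly point to \cite{gfreeq2} as the engine.

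However, your stated mechanism for (a) is not right. The Burnside idempotent does not ``exhibit $\sigma_K$ as a retract built by tensor operations from $\sigma_K\sm Y$''. In the paper the role of tom Dieck's calculation is different: one uses an idempotent $e^G_K\in A(N_G(K))$ supported on a clopen neighbourhood of $K$ in $\Phi N_G(K)$ (not a primitive idempotent at $(K)$ in $A(G)$, which typically does not exist) together with geometric $K$-fixed points to identify $\GspecK$ with the category of free rational $W_G(K)$-spectra. Only then does \cite{gfreeq2} enter: the derived category of torsion $H^*(BW^e_G(K))[W^d_G(K)]$-modules is generated as a localizing tensor ideal by any nonzero object, hence so is $\GspecK$. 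That minimality is what forces $\sigma_K\in\loct(\sigma_K\sm Y)\subseteq\mcL$. Your final paragraph shows you sense this, but the sentence invoking the Burnside idempotent as producing a retract should be replaced by the minimality argument just described.

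For (b), the Borel--Hsiang--Quillen Localization Theorem concerns Borel cohomology of finite complexes and plays no role here; invoking it for arbitrary $X$ would not even be legitimate. The paper's argument is plain isotropy separation: a Zorn-type induction on families shows the objects $\elr{K}$ generate $\Gspec$ as a localizing tensor ideal, whence $X\in\loct(\elr{K}\sm X : K\in\sub(G))$, and $\elr{K}\sm X\simeq 0$ unless $K\in\Ig(X)$. No cotoral ordering or localization theorem is needed for infinite spectra; those enter only later, when one turns to finite spectra.
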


For each $K$ we may consider the localizing subcategory
$$\GspecK=\{ X\st \Ig(X)\subseteq (K)_G \}$$
of $G$-spectra `at $K$'. Theorem \ref{thm:loct} shows that $\GspecK$
is minimal in the 
sense that it is generated by any non-trivial object. This statement follows from an understanding of free
$W_G(K)$-spectra via \cite{gfreeq2}, and  is the key ingredient in proving 
Theorem \ref{thm:loct}. Indeed, it is also the main step in identifying the Balmer
spectrum as a set, and in the classification of thick tensor ideals of
finite spectra.

\subsection{Notation}
We will tend to let subgroups follow the alphabet, so that 
$$G\supseteq H\supseteq K \supseteq L.$$
and the trivial group is denoted $1$. We write $(H)_G$ for the
$G$-conjugacy class of $H$ and we write $L\subseteq_GK$ if $L$ is
$G$-conjugate to a subgroup of $K$.

We write $\sub (G)$ for the set of closed subroups of $G$, and
$\sub(G)/G$ for the set of conjugacy classes. We write $\cF G
\subseteq \sub (G)$ for the set of subgroups of finite index in their
normalizer, and $\Phi G =\cF G/G$ for the corresponding set of
conjugacy classes. 

Given a partially ordered set ({\em poset}) $X$ and a subset
$A\subseteq X$, we write $\Lambda_{\leq}(A)=\{ b\in X\st b\leq a
\mbox{ for some } a\in A\}$
for the downward closure of $A$. 

We will consider two partial
orderings on the set of  closed subgroups of a compact Lie group $G$.
We indicate containment of subgroups by $K\subseteq H$,  and refer to
this as the {\em classical} ordering. Accordingly $\Lcl (K)$ consists
of all closed subgroups of $K$. More often we consider conjugacy
classes of subgroups and the ordering induced by subconjugacy. 
The more signifcant ordering in this
paper  is
that of  {\em cotoral inclusion}. We write $K\leq H$
if $K$ is normal in $H$ and $H/K$ is a torus. The importance of this
ordering arises from the Localization Theorem. The set 
$\Lct (K)$ consists of all closed  subgroups cotoral in $K$.
More often we consider conjugacy
classes of subgroups and the ordering induced by being cotoral up to
 conjugacy.

 All homology and cohomology will be reduced and have rational
 coefficients. All spectra will be rational. 

A $G$-equivalence of $G$-spectra will be denoted $X\simeq Y$. A non-equivariant
equivalence of $G$-spectra will be denoted $X\simeq_1 Y$ for emphasis.

\part{Infinite spectra}

\section{Isotropy separation}
The purpose of this section is to recall some well known facts about
universal spaces in equivariant topology: the spaces with single
subgroup isotropy are basic building blocks. This corresponds to the
way that to each prime in commutative algebra there is an
indecomposable injective, and that all modules can be constructed from
them. 

\begin{defn}
Writing $\{ \subseteq_GK\}$ for the family of subgroups subconjugate
to $K$ and similarly for proper subgroups, for any subgroup $K$ of $G$ we may define the $G$-space
$\elr{(K)_G}$ by the cofibre sequence
$$E\{ \subset_G K\}_+\lra E\{ \subseteq_G K\}_+\lra \elr{(K)_G}$$
\end{defn}

It is clear from the definition that the geometric isotropy of
$\elr{(K)_G}$ is precisely the conjugacy class $(K)_G$ and that
$\Phi^K\elr{(K)_G}\simeq_1S^0$. Where the context makes clear the
ambient group we will write $\elr{K}=\elr{(K)_G}$.

\begin{lemma}
\label{lem:elrgen}
The $G$-spectra $\elr{K}$ as $K$ runs through closed subgroups of $G$
generate the category of $G$-spectra as a localizing tensor ideal. 
\end{lemma}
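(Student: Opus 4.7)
Since $\loct(\{\elr{K}\}_K)$ is closed under smashing with arbitrary $G$-spectra, it suffices to prove $S^0 \in \loct(\{\elr{K}\}_K)$: then every $X \simeq X \wedge S^0$ lies in it. My plan is to realize $S^0 \simeq E\mathcal{F}_{\mathrm{all},+}$, where $\mathcal{F}_{\mathrm{all}}$ is the family of all closed subgroups, by a transfinite assembly from the $\elr{K}$. First, well-order the conjugacy classes of closed subgroups compatibly with subconjugacy, so that $(H) < (K)$ whenever $H \subsetneq_G K$. Such a linear extension exists because subconjugacy on $\sub(G)$ is well-founded: any strict descending chain is bounded in length by $\dim G$ together with the orders of the relevant component groups. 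For each ordinal $\alpha$, the set $\mathcal{F}_\alpha$ of conjugacy classes at positions below $\alpha$ is closed under subconjugacy, hence a family; by construction $\mathcal{F}_0 = \emptyset$ and the $\mathcal{F}_\alpha$ eventually exhaust $\mathcal{F}_{\mathrm{all}}$.

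I show $E\mathcal{F}_{\alpha,+} \in \loct(\{\elr{K}\}_K)$ by transfinite induction on $\alpha$. Limit stages follow from $E\mathcal{F}_{\lambda,+} \simeq \hocolim_{\alpha < \lambda} E\mathcal{F}_{\alpha,+}$ (both have fixed points of the same form at each subgroup) together with closure of localizing subcategories under homotopy colimits. At a successor $\mathcal{F}_{\alpha+1} = \mathcal{F}_\alpha \cup (K_\alpha)$, the compatibility of the ordering forces $K_\alpha$ to be maximal in $\mathcal{F}_{\alpha+1}$. The family inclusions $\{\subsetneq_G K_\alpha\} \subseteq \mathcal{F}_\alpha$ and $\{\subseteq_G K_\alpha\} \subseteq \mathcal{F}_{\alpha+1}$ produce a map from the defining cofibre sequence for $\elr{K_\alpha}$ into the cofibre sequence $E\mathcal{F}_{\alpha,+} \to E\mathcal{F}_{\alpha+1,+} \to C_\alpha$, yielding a natural map $\elr{K_\alpha} \to C_\alpha$. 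Once this is shown to be a $G$-equivalence, the cofibre sequence combined with the inductive hypothesis gives $E\mathcal{F}_{\alpha+1,+} \in \loct(\{\elr{K}\}_K)$, and taking $\alpha$ to the supremum exhibits $S^0$ in the localizing tensor ideal.

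The crux is the identification $\elr{K_\alpha} \simeq C_\alpha$. At $L$-geometric fixed points with $L \notin (K_\alpha)$, both sides are trivial by construction. For $L = K_\alpha$, the maximality of $K_\alpha$ in $\mathcal{F}_{\alpha+1}$ restricts the family in $W_G(K_\alpha) = N_G(K_\alpha)/K_\alpha$ to the trivial family $\{1\}$, so both $E\mathcal{F}_{\alpha+1,+}^{K_\alpha}$ and $E\{\subseteq_G K_\alpha\}_+^{K_\alpha}$ are models for the universal free $W_G(K_\alpha)$-space $EW_G(K_\alpha)_+$, between which any map is automatically a $W_G(K_\alpha)$-equivalence. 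Concluding from this that $\elr{K_\alpha} \to C_\alpha$ is a $G$-equivalence invokes the standard detection principle that geometric fixed points (together with their Weyl group actions) detect equivalences of rational $G$-spectra. This detection principle is the main technical ingredient that must be handled carefully in the successor step; the rest of the argument is a bookkeeping transfinite induction driven by the defining cofibre sequence for $\elr{K}$.
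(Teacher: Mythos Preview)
Your proof is correct and follows essentially the same strategy as the paper: assemble $S^0\simeq E(\mathrm{All})_+$ from the $\elr{K}$ by enlarging families one conjugacy class at a time via the cofibre sequences, handling unions by homotopy colimits. The only difference is organizational---you run an explicit transfinite induction along a well-ordering compatible with subconjugacy, whereas the paper uses a Zorn's-lemma maximal-family argument (noting countability of conjugacy classes to make chains sequential); your verification that the successor cofibre is $\elr{K_\alpha}$ via geometric fixed points is more explicit than the paper's bare assertion of the cofibre sequence, but the content is the same.
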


\begin{proof}
We consider the collection $\cC \cF$ of families $\cF$ so that $E\cF_+$ lies in
$\loc (\elr{K}\st K \subseteq G)$, ordered by inclusion. We show that
$\cF =All$ is in $\cC \cF$,
so that $S^0=EAll_+$ can be built from $\elr{K}$, and hence the other
cells can be obtained by using smash products. 

Any increasing chain $\{ \cF_{\alpha} \}_{\alpha}$ has an upper
bound. Indeed, a functorial construction of the universal space
(such as the bar construction) shows there is a functor from from families to $G$-spaces, and hence
the increasing chain gives a strict diagram,  and by considering fixed
points we see
$$E\left( \bigcup_{\alpha}\cF_{\alpha}\right)_+=\hocolim_{\alpha} [
(E\cF_{\alpha})_+]. $$
Since there are countably many conjugacy classes, we may assume the
chain is sequential and hence the homotopy colimit is in the
localizing subcategory of its terms. 

Finally we suppose that $\cF$ is maximal in $\cC \cF$.  If it is not
$All$, since descending chains of subgroups are finite,  we may
find a subgroup $K$ which not in $\cF$ but with all subgroups in
$\cF$. Then we have a cofibre sequence
$$E\cF_+ \lra E\cF\cup\{(K)_G\}_+\lra \elr{(K)_G}, $$
which contradicts maximality. 
\end{proof}

\section{Free $G$-spectra}
We have defined the category of free $G$-spectra to be the localizing
subcategory of $\Gspec$ consisting of the spectra with geometric
isotropy contained in $\{ 1\}$. This coincides with the localizing
subcategory generated by $G_+$. It is also equivalent to the homotopy
category of several well known model categories.

It was proved in \cite{gfreeq2} that a model category for free
$G$-spectra   is Quillen equivalent to an algebraic model. Writing
$G_e$ for the identity component of $G$ and $G_d=G/G_e=\pi_0(G)$ for
the discrete quotient, we see that $G_d$ acts on $G_e$ and hence we
may form the skew group ring $H^*(BG_e)[G_d]$. This gives rise to a
Quillen equivalence between free $G$-spectra and an algebraic model. 
This in turn induces an equivalence 
$$\freeGspec\simeq D(\torsHBGWmod). $$
of triangulated categories. 

Note that in concrete terms it means we have an efficient method of
calculation:  for any free $G$-spectra $X$ and $Y$, there is an Adams spectral sequence
 $$E_2^{*,*}=\Ext_{H^*(BG_e)}^{*,*}(H_*^{G_e}(X),
 H_*^{G_e}(X))^{G_d}\Rightarrow [X,Y]^G_*.$$
However for our purposes we only need a structural consequence.

\begin{thm}
The category 
$$D(\torsHBGWmod). $$
is generated as a localizing tensor ideal by any non-trivial elment. 
\end{thm}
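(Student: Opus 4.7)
Set $R := H^*(BG_e)$, $\Gamma := G_d$, and let $\fm$ be the unique maximal homogeneous ideal of $R$, so $R/\fm = \Q$. Write $\mcD := D(\torsHBGWmod)$, viewed as a tt-category with tensor $-\otimes^L_R-$ carrying the diagonal $\Gamma$-action. Fix a non-trivial $X \in \mcD$; I want to show $\loct(X) = \mcD$. As $\Gamma$ is finite and $\Q$ has characteristic zero, $\Q[\Gamma]$ is semisimple, so let $S_1, \dots, S_r$ be its simple modules, each regarded as an object of $\mcD$ via $R\twoheadrightarrow R/\fm=\Q$. Every torsion $R$-module is a filtered colimit of its finitely generated submodules, each of which has finite length; equivariantly, every finitely generated object of $\torsHBGWmod$ admits a finite $\Gamma$-equivariant filtration with subquotients among $\{S_i\}$. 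Thus $\mcD = \loc(S_1,\dots,S_r)$, and it suffices to show that every $S_i$ lies in $\loct(X)$.

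I first extract one simple from $X$. Since $\Q \in \mcD$, the object $X \otimes^L_R \Q$ lies in $\loct(X)$, and I claim it is non-zero. Pick $n$ with $H^n(X) \ne 0$; this is a non-zero torsion $R$-module, whose socle $\Hom_R(\Q, H^n(X))$ is non-zero by graded Nakayama applied to a finitely generated non-zero submodule. Since $R$ is regular (so $\Q$ has finite projective dimension), this implies $\Tor^R_*(\Q, H^n(X)) \ne 0$, and hence $X \otimes^L_R \Q \ne 0$ in $D(\Q[\Gamma])$. By semisimplicity of $\Q[\Gamma]$, the complex splits as a direct sum of shifts of the $S_i$, so some $S_i$ appears as a retract; hence $S_i \in \loct(X)$.

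Now I spread this to all simples. Let $Q := \Q[\Gamma]$ carry the regular $\Gamma$-action and trivial $R$-action, so $Q \in \mcD$. Computing $S_i \otimes^L_R Q$ via the Koszul resolution of $\Q$ over $R$, and using that the $\xi_j$ act as $0$ on $Q$, yields
$$S_i \otimes^L_R Q \;\simeq\; S_i \otimes_\Q Q \otimes_\Q \Lambda(\xi_1, \dots, \xi_n),$$
with zero differential and the diagonal $\Gamma$-action. The change of basis $v \otimes h \mapsto h^{-1} v \otimes h$ identifies $S_i \otimes_\Q Q$ (diagonal action) with $\dim_\Q(S_i)$ copies of the regular representation, which contains every $S_j$ as a summand. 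So each $S_j$ is a retract of $S_i \otimes^L_R Q \in \loct(X)$, giving $S_j \in \loct(X)$ for every $j$; combined with the first paragraph this gives $\mcD = \loc(S_1, \dots, S_r) \subseteq \loct(X)$, as required.

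The main obstacle is the non-vanishing of $X \otimes^L_R \Q$ in the second paragraph: one needs the derived form of graded Nakayama, namely that the residue field detects every non-zero torsion object of $D(R)$ for $R$ a graded polynomial ring (equivalently, $\fm \in \supp(X)$ whenever $X$ is a non-zero torsion complex).
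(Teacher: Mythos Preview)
Your plan is correct and uses the same two ingredients as the paper, but assembles them in the opposite order. The paper first tensors a non-zero $M$ with the regular representation $\Q[G_d]$ and passes to $G_d$-fixed points to obtain an object $M'$ that is essentially non-equivariant; it then quotes as a black box the minimality of $D(\text{tors-}H^*(BG_e)\text{-mod})$ (Neeman's classification) to build every $N'[G_d]$, and finishes by noting that an arbitrary $N$ is a retract of such an induced object. You instead first tensor with the residue field $\Q$ to kill the $R$-action and land in $D(\Q[\Gamma])$, extract a single simple $S_i$ there by semisimplicity, and only then apply the regular-representation trick (tensoring with $\Q[\Gamma]$) to reach every $S_j$. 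Your route is more explicit about the generators $S_1,\dots,S_r$ and isolates the non-equivariant input as the sharper statement ``$\Q$ detects non-zero torsion complexes''; the paper's route is terser if one is happy to cite the full classification. Substantively, both are the same argument: residue-field detection over the polynomial ring plus the fact that $V\otimes_\Q\Q[\Gamma]$ with diagonal action is free.

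One small comment on the step you flag as the main obstacle: your in-text argument (non-zero socle gives non-zero top $\Tor$, since $\Tor^R_n(\Q,M)\cong\Hom_R(\Q,M)$ by Koszul self-duality, then run the hypercohomology spectral sequence) is complete when $X$ is bounded below, because the relevant class sits on the edge and cannot be hit. For unbounded $X$ you do need the detection statement you name at the end, which is precisely the non-equivariant minimality the paper also takes for granted.
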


\begin{proof}
The statement that the derived category of DG-modules over a graded
polynomial ring is generated by any non-trivial element is well known
as part of the classification of localizing subcategories of modules over
the polynomial ring. 

Suppose then that $M$ is a non-trivial element of
$D(\torsHBGWmod)$. We form $M[G_d]:=\Q [G_d]\tensor M$ and its fixed
point set $M':=M[G_d]^{G_d}$. From the algebraic result, this
generates all  torsion modules over $H^*(BG_e)$. Hence $M[G_d]\cong
M'[G_d]$ constructs all torsion modules of the form $N'[G_d]$ for 
a DG-torsion $H^*(BG_e)$-module $N'$. However
any torsion $H^*(BG_e)[G_d]$-module $N$ is a retract of 
$N'[G_d]$ where $N'=N[G_d]^{G_d}$.  Hence $M$ generates the whole
category as a localizing tensor ideal.
\end{proof}

\begin{cor}
\label{cor:freemin}
For any compact Lie group $G$, the category of free rational $G$-spectra
is generated as a localizing tensor ideal by any non-trivial elment. 
\end{cor}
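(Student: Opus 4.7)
The plan is to transfer the theorem from the algebraic side to the topological side using the Quillen equivalence cited from \cite{gfreeq2}. The immediately preceding theorem tells us that $D(\torsHBGWmod)$ is generated as a localizing tensor ideal by any non-trivial element, and the triangulated equivalence
$$\freeGspec \simeq D(\torsHBGWmod)$$
is already in hand. So the task reduces to checking that enough of the tensor-triangulated structure is carried across this equivalence to ensure that localizing tensor ideals on one side correspond to localizing tensor ideals on the other.

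First I would recall that the category of free rational $G$-spectra inherits a symmetric monoidal structure from $\Gspec$ (the smash product of two free spectra is again free, since $\Phi^K(X\sm Y)\simeq \Phi^KX\sm\Phi^KY$ and $\Phi^K$ vanishes on either factor if $K\ne 1$). Dually, tensoring in $D(\torsHBGWmod)$ is the derived tensor product over $H^*(BG_e)$ with its twisted $G_d$-action, which preserves torsion modules. The Quillen equivalence of \cite{gfreeq2} is monoidal, so after passage to homotopy categories the equivalence intertwines the two tensor products (up to natural isomorphism). In particular, a subcategory of $\freeGspec$ is a localizing tensor ideal if and only if its image is a localizing tensor ideal in $D(\torsHBGWmod)$.

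Given this, the argument is formal. Let $X$ be a non-trivial free rational $G$-spectrum, and let $M$ be the corresponding object of $D(\torsHBGWmod)$ under the equivalence. Since the equivalence is an equivalence of triangulated categories, $M$ is non-trivial. By the preceding theorem, $\loct(M) = D(\torsHBGWmod)$. Transporting back through the equivalence, $\loct(X)$ is all of the free rational $G$-spectra, as required.

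The main point requiring care — and the only real obstacle — is the monoidality of the equivalence of \cite{gfreeq2} at the level of homotopy categories; once this is granted, the result is an immediate translation of the algebraic theorem. Everything else (non-triviality is detected, coproducts correspond, triangles correspond) follows from the fact that the equivalence is a triangulated equivalence of coproduct-preserving categories.
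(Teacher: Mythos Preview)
You have correctly identified the crux of the matter: everything reduces to whether the equivalence of \cite{gfreeq2} respects the tensor products. However, you then simply assert that ``the Quillen equivalence of \cite{gfreeq2} is monoidal'' and proceed. This is precisely the point the paper flags as \emph{not} established: the paper's proof opens with the sentence ``Since the equivalence of \cite{gfreeq2} was not shown to be monoidal, a few words of proof are required.'' So your proposal has a genuine gap at exactly the place you yourself called ``the only real obstacle''.

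The paper's workaround is worth understanding. Rather than transporting tensor ideals across a possibly non-monoidal equivalence, it arranges matters so that only a single tensor operation is needed, and performs that operation on the topological side where the monoidal structure is unambiguous. Concretely: given a non-trivial free $G$-spectrum $X$, one first forms $(G/G_e)_+\sm X$, which lies in $\loct(X)$ by definition of tensor ideal. The algebraic image of this object has homotopy free over $\Q[G_d]$, i.e.\ is of the form $M[G_d]$. Now one revisits the proof of the preceding theorem and observes that once one has reached $M[G_d]$, the remaining steps (building every $N'[G_d]$, and then recovering $N$ as a retract) use only the \emph{triangulated} structure, not the tensor product. A merely triangulated equivalence suffices to transport these steps back to spectra, showing that every $(G/G_e)_+\sm Y$ lies in the localizing subcategory generated by $(G/G_e)_+\sm X$, hence in $\loct(X)$. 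Finally, since $G_d$ is finite and we work rationally, each $Y$ is a retract of $(G/G_e)_+\sm Y$, so $Y\in\loct(X)$ as well.
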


\begin{proof}
Since the equivalence of \cite{gfreeq2} was not shown to be monoidal, a
few words of  proof are required. 

If $X$ is a non-trivial free $G$-spectrum then so is $(G/G_e)_+\sm
X$. This has homotopy which is free over $\Q [G_d]$. Applying the
theorem we note that the corresponding
DG-torsion-$H^*(BG_e)[G_d]$-module generates all such modules as a
triangulated category. It follows that any free $G$-spectrum of the
form $(G/G_e)_+\sm Y$ is in the localizing tensor ideal generated by
$X$. Since $Y$ is a retract of $(G/G_e)_+\sm Y$, this completes the
proof. 
\end{proof}

\section{Localizing tensor ideals}
Recall that we write $\Gspec$ for the homotopy category of rational $G$-spectra
and $\GspecK$ for the category of rational $G$-spectra which are either contractible or 
have  geometric isotropy conjugate to $K$. We will show that $\GspecK$ is
generated as a localizing tensor ideal by any non-trivial element.
 
\subsection{Spectra over a normal subgroup}
\label{subsec:GspectraoverK}
It is well known that if $\Gamma$ is a compact Lie group with normal
subgroup $\Delta $ then  geometric fixed points induce an equivalence 
$$\mbox{$\Gamma$-spectra $\langle \supseteq \Delta\rangle $}\simeq \mbox{$\Gamma/\Delta$-spectra} $$
of tensor triangulated categories. The first category consists of
$\Gamma$-spectra with geometric isotropy consisting of subgroups
containing $N$ (sometimes called $\Gamma$-spectra `over $\Delta$'). 

Given a closed subgroup $K$ of $G$,  we apply this  when
$\Gamma =N_G(K)$, $\Delta=K$. In particular we have an isomorphism 
$$\Phi^K: [X,Y]^{N_G(K)}\stackrel{\cong}\lra
[\Phi^KX,\Phi^KY]^{W_G(K)} $$
whenever the geometric isotropy of $Y$ lies in $\langle \supseteq
K\rangle $.
This applies integrally. 

\subsection{Restriction}
Suppose $G\supseteq H\supseteq K$. 
If we have a $G$-spectrum $X$, we may restrict the $G$-conjugacy class
$(K)_G$  to a collection of subgroups of $H$. This may
well be bigger than $(K)_H$ and the subgroups of the  $G$-conjugacy
class $(K)_G$ which lie inside $H$ may break into several $H$-conjugacy 
classes. 

\begin{example}
 If $G=SO(3)$ then all elements  of order 2 are conjugate,
 and we may take $K$ to be generated by a half turn around the
 $z$-axis. Now take $H=O(2)$  to be the normalizer of $K$. The 
involutions in $O(2)$ fall into two conjugacy classes: the rotations 
of order 2  (actually a singleton) and the reflections (forming a
space homeomorphic to a 
circle). Accordingly, when we restrict a spectrum $X$ with geometric 
isotropy  $H$ to an $N$-spectrum, the geometric isotropy will no
longer be a single conjugacy class. 
\end{example}

The example is typical. 

\begin{lemma}
There is a finite decomposition
$$(K)_G\cap \sub(H)=\coprod_{i=0}^n (K_i)_H,  $$
where $K_i=K^{\gamma_i}$ for $\gamma_i \in G$ and $\gamma_0=e$. 
\end{lemma}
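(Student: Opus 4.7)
The plan is to convert the problem into one about orbits of a compact Lie group acting on a compact manifold, and then deduce finiteness from openness of orbits via a tangent-space calculation.

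First I would replace $(K)_G \cap \sub(H)$ by a model with a visible compact group action. The map $gH \mapsto K^g$ identifies the $K$-fixed set $(G/H)^K = \{gH \st K \cdot gH = gH\}$ with $\{K^g \st K^g \subseteq H\} = (K)_G \cap \sub(H)$, using that $K \cdot gH = gH$ iff $K^g \subseteq H$. Two cosets $g_1H, g_2H$ produce equal subgroups iff $g_2 g_1^{-1} \in N_G(K)$, and produce $H$-conjugate subgroups iff $g_2 \in N_G(K) g_1 H$. Hence $H$-conjugacy classes inside $(K)_G \cap \sub(H)$ correspond bijectively to $N_G(K)$-orbits on $(G/H)^K$.

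Next, $(G/H)^K$ is compact: it is the fixed set of the smooth action of the compact Lie group $K$ on the compact manifold $G/H$, hence a closed smooth submanifold (of possibly varying dimensions) of $G/H$. The main step is to show that each $N_G(K)$-orbit in $(G/H)^K$ is open in $(G/H)^K$, which I would carry out by a tangent-space computation at a point $gH$. The tangent space to $(G/H)^K$ at $gH$ is $(\mathfrak{g}/\mathrm{Ad}(g)\mathfrak{h})^K$, while the tangent space to the $N_G(K)$-orbit is the image of $\mathrm{Lie}(N_G(K))$ in $\mathfrak{g}/\mathrm{Ad}(g)\mathfrak{h}$. The inclusion $K \subseteq gHg^{-1}$ gives $\mathfrak{k} \subseteq \mathrm{Ad}(g)\mathfrak{h}$, and a $K$-invariant splitting $\mathfrak{g} = \mathrm{Ad}(g)\mathfrak{h} \oplus V$ (obtained from an averaged inner product) lets me represent any $K$-fixed class in the quotient by a vector in $V^K$; such a vector centralizes $K$ and hence sits inside $\mathrm{Lie}(N_G(K))$. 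The reverse inclusion of tangent spaces is immediate, so the orbits are open.

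Finally, a compact space partitioned into disjoint open subsets is a finite disjoint union, so there are only finitely many $H$-conjugacy classes in $(K)_G \cap \sub(H)$; choosing one representative $K^{\gamma_i}$ from each (with $\gamma_0 = e$ yielding $K$ itself) gives the claimed decomposition. The main obstacle in this approach is the tangent-space match, and its crux is the observation that a $K$-invariant normal direction to $G/H$ is automatically centralized by $K$, so already lies in the normalizer of $K$ rather than merely ``normalizing $K$ modulo $\mathrm{Ad}(g)\mathfrak{h}$''.
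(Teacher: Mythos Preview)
Your argument is correct, with one loose step worth flagging: the assignment $gH \mapsto K^g$ is not well-defined on cosets (replacing $g$ by $gh$ replaces $K^g$ by its $H$-conjugate $h^{-1}K^gh$), so $(G/H)^K$ is not literally identified with $(K)_G \cap \sub(H)$ as a set. What you actually need, and correctly establish in the next two sentences, is the bijection between $N_G(K)$-orbits on $(G/H)^K$ and $H$-conjugacy classes in $(K)_G \cap \sub(H)$; the tangent-space computation then goes through as written.

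The paper takes a different and much shorter route, working entirely in $\sub(H)$ with the Hausdorff metric. Each $H$-conjugacy class is closed (as a continuous image of $H$), and the Montgomery--Zippin theorem (``close means subconjugate''), applied inside $H$, shows each class is also open: a nearby member of $(K)_G \cap \sub(H)$ is $H$-subconjugate, and having the same dimension and component count as $K$ forces $H$-conjugacy. Finiteness then follows from compactness of $(K)_G \cap \sub(H)$. Your approach trades the appeal to Montgomery--Zippin for an explicit Lie-algebraic calculation; this makes the argument self-contained and exposes the geometric mechanism (a $K$-fixed normal direction to $gHg^{-1}$ already centralizes $K$), at the cost of more work. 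The paper's version is quicker but treats a substantial structural theorem as a black box.
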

\begin{proof}
It is clear that $(K)_G$ breaks into a disjoint union of $H$-conjugacy
classes, each of which is a closed subset of the space of subgroups
with the Hausdorff metric topology. By the Montgomery-Zippin Theorem (`close-means-subconjugate')
they are also open, so there are finitely many. 
\end{proof}

\begin{remark}
This corresponds to the fact that 
$$\Phi^K(G_+\sm_HY)\simeq \bigvee_{i=0}^n \gamma_i \Phi^{K_i}Y. $$
\end{remark}

Now we specialize to $H=N=N_G(K)$. 

\begin{lemma}
\label{lem:eKU}
There is an open and closed subset $U\subseteq  \Phi N$ so that 
$K\in \Lct (U)$ but $K_i\not \in \Lct (U)$ for $i\neq 0$. 
\end{lemma}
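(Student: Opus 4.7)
The plan is to take $U \subseteq \Phi N$ to be a clopen neighborhood of the conjugacy class of a distinguished cotoral ancestor of $K$, chosen to separate $K$ from the other $K_i$.

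Since $K \trianglelefteq N$, the Weyl group $W := W_G(K) = N/K$ is a compact Lie group. First I would pick a maximal torus $T_W \subseteq W$ and let $L \subseteq N$ denote its preimage under the quotient $N \twoheadrightarrow W$. Then $K \trianglelefteq L$ with $L/K \cong T_W$, so $K$ is cotoral in $L$; moreover $N_N(L)/L \cong N_W(T_W)/T_W$ is the finite Weyl group of $T_W$ in $W$, so $L \in \cF N$ and $[L]$ is a point of $\Phi N$. This already gives $K \in \Lct(\{[L]\})$.

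Next, I would take $U$ to be a clopen neighborhood of $[L]$ in the $f$-topology on $\Phi N$, tight enough that every representative of a class in $U$ is $N$-conjugate to $L$. In the $f$-topology of \cite{ratmack} (to be reviewed in Section~\ref{sec:top}), cotorally maximal classes like $[L]$ sit at the top of their stratum and admit such clopen neighborhoods; this is the point where the plan relies most heavily on properties of the $f$-topology not yet established in the text.

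Finally, I would verify $K_i \not\in \Lct(U)$ for $i \neq 0$. If $K_i$ were cotoral in some $L' = L^n$, then $N$-conjugation by $n^{-1}$ gives $K' := K_i^{n^{-1}} \trianglelefteq L$ with $L/K'$ a torus of dimension $\dim T_W$. Thus $K$ and $K'$ are both $d$-dimensional normal subgroups of $L$ whose quotients are tori of dimension $\dim T_W$, and $K' \in (K)_G$. The $G$-element $\gamma$ conjugating $K$ to $K'$ implements an isomorphism between the two cotoral decompositions of $L$; using the maximal-torus characterization of $L$ and the rigidity of maximal tori up to conjugacy in $W$, $\gamma$ must lie in $N_G(K) = N$, which forces $K_i$ to be $N$-conjugate to $K$ and contradicts $i \neq 0$.

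The main obstacle is the clopenness of $U$ in the second step, which depends on results about the $f$-topology postponed to Section~\ref{sec:top}. The algebraic separation in the last step is robust in generic situations but needs $U$ to be sufficiently tight in the $f$-topology to rule out degenerate overlaps (as can arise when $N$ has large abelian quotients and both $K$ and some $K_i$ happen to be normal in a common cotoral ancestor).
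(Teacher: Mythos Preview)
There are two genuine gaps.

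Your second step requires $[L]$ to be isolated in $\Phi N$ (``every representative of a class in $U$ is $N$-conjugate to $L$'' forces $U=\{[L]\}$). This fails in general: take $K=O(2)\times 1 \trianglelefteq N=O(2)\times\T$, so $W=\T$ and $L=N$; then the classes $[D_n\times\T]$ lie in $\Phi N$ (each has Weyl group $\Z/2$) and converge to $[L]$ in the Hausdorff metric. So $U$ is forced to contain classes other than $[L]$, and your third step---which only treats $N$-conjugates of $L$ itself---does not address those.

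The third step is also not a proof even for $H=L$. You assert that the conjugating element $\gamma$ lies in $N_G(K)$ ``by the maximal-torus characterisation of $L$'', but $\gamma$ has no reason to carry $L$ to itself: $L$ was constructed from $K$, and the analogous subgroup constructed from $K'$ (the preimage in $N$ of a maximal torus of $N/K'$) is in general a different subgroup of $N$, so rigidity of maximal tori in $W$ places no constraint on $\gamma$. The paper's argument is quite different: it aims to show that \emph{no} subgroup $H\leq N$ can have $K$ and a distinct $G$-conjugate $K'\subseteq N$ both cotoral in it. The key input---entirely absent from your sketch---is that $G$-conjugate tori inside $N$ are already $N$-conjugate \cite[13.4]{AGtoral}: applied to the maximal tori of $K$ and $K'$, this lets one assume $K\cap K'$ contains a maximal torus of $K'$, whence the image $K'/(K\cap K')$ of $K'$ in $N/K$ can contain no nontrivial torus. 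Because the conclusion is uniform in $H$, the desired $U$ is then obtained by separating the closed set of cotoral ancestors of $K$ from those of the $K_i$ inside the totally disconnected space $\Phi N$; no isolation of any particular point is needed.
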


\begin{proof}
It suffices to prove that if $K'\subseteq N$ is a $G$-conjugate of $K$
distinct from $K$, then 
there is no subgroup of $N$ in which both $K$ and $K'$ are
cotoral. It suffices to show that if the image $K'/K\cap K'$ of $K'$ in
$W=N/K$ is a torus then it is trivial. 

For this, we note that since the maximal tori of $K$ and $K'$ are subtori of $N$
which are $G$-conjugate, they are also $N$-conjugate \cite[13.4]{AGtoral},
so we may suppose that $K\cap K'$ is their common maximal
torus. Accordingly if $K'/K\cap K'$ is a torus, it must be trivial. 
\end{proof}

Now choose an open and closed subspace $U$ as in Lemma \ref{lem:eKU} and let 
$e^G_K\in A(N)$ denote the corresponding idempotent. 

\begin{lemma}
\label{lem:restonormalizer}
For $G$-spectra $X$ and $Y$ with geometric isotropy $K$, restriction to $N_G(K)$ 
induces an  isomorphism
$$[X,Y]^G\stackrel{\cong}\lra [e^G_KX, e^G_KY]^N $$
\end{lemma}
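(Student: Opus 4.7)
The plan is to construct an inverse to the natural restriction-and-idempotent map
$$r\colon [X,Y]^G \lra [e^G_K \res^G_N X,\, e^G_K \res^G_N Y]^N$$
by means of the induction-restriction adjunction $(G_+ \sm_N -)\dashv \res^G_N$. The crux is the structural claim that for any $G$-spectrum $X$ with $\Ig(X)\subseteq (K)_G$, the natural map
$$\mu_X\colon G_+ \sm_N (e^G_K \res^G_N X) \lra X,$$
obtained by smashing the idempotent inclusion $e^G_K\res X \hookrightarrow \res X$ with $G_+ \sm_N(-)$ and composing with the counit, is a $G$-equivalence. Granted this, the lemma follows from the chain
$$[X,Y]^G \iso [G_+ \sm_N e^G_K\res X,\, Y]^G \iso [e^G_K \res X,\, \res Y]^N \iso [e^G_K \res X,\, e^G_K \res Y]^N,$$
in which the first isomorphism is $\mu_X^*$, the second is the adjunction, and the third holds because $\res Y\simeq e^G_K\res Y \vee (1-e^G_K)\res Y$ and the contribution from the complementary summand vanishes: for $f\colon e^G_K\res X\to (1-e^G_K)\res Y$, the $A(N)$-action of $e^G_K$ on $[e^G_K\res X,\res Y]^N$ is the identity when realised by precomposition (since $e^G_K$ acts as $1$ on the source) but zero when realised by postcomposition (since $e^G_K$ kills the target), forcing $f=0$.

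To prove $\mu_X$ is a $G$-equivalence I would check geometric fixed points $\Phi^L$ for all closed $L\subseteq G$. If $L\notin (K)_G$ then $\Phi^L X=0$, and the decomposition
$$\Phi^L(G_+\sm_N Z)\simeq \bigvee_{i}\gamma_i\, \Phi^{L_i}Z$$
from the Remark preceding Lemma~\ref{lem:eKU} (with $L_i$ running over representatives of $(L)_G\cap\sub(N)$) shows the left-hand side vanishes too, since each $L_i\notin (K)_G$ and hence $\Phi^{L_i}\res X=0$. For $L\in (K)_G$, by conjugating one reduces to $L=K$, and the decomposition yields
$$\Phi^K(G_+\sm_N e^G_K\res X)\simeq \bigvee_{i=0}^n \gamma_i\, \Phi^{K_i}(e^G_K\res X).$$
Lemma~\ref{lem:eKU} is tailored so that $e^G_K$ acts as the identity on $\Phi^{K}(\res X)$ and as zero on $\Phi^{K_i}(\res X)$ for $i\neq 0$; so only the $i=0$ summand survives as $\Phi^K X$, and tracing the counit shows the map induced by $\mu_X$ is the identity of $\Phi^K X$.

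The main obstacle is confirming this behaviour of the Burnside-ring idempotent $e^G_K$ on the geometric fixed point functors $\Phi^{K_i}$. Under tom Dieck's rational description of $A(N)$, $e^G_K$ is a sum of primitive idempotents indexed by $U\subseteq \Phi N$, and its image in $A(W_N(K_i))$ under the ring homomorphism induced by $\Phi^{K_i}$ is $1$ precisely when the cotoral hull of $K_i$ lies in $U$. By the construction of $U$ in Lemma~\ref{lem:eKU} this holds for $K_0=K$ and fails for each $K_i$ with $i\neq 0$. Once this is in hand the rest of the argument is formal manipulation of the adjunction.
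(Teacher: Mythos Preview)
Your proof is correct and follows essentially the same route as the paper: the paper's one-line proof asserts that the projection $G_+\sm_N e^G_K S^0 \lra S^0$ is a $\Phi^K$-equivalence ``by construction'' of $e^G_K$, hence (after smashing with $X$, via the projection formula) a $G$-equivalence when $\Ig(X)\subseteq (K)_G$; your argument is a careful unpacking of exactly this, making explicit the use of the decomposition of $\Phi^K(G_+\sm_N -)$ and of Lemma~\ref{lem:eKU} to kill the $i\neq 0$ summands. The only difference is level of detail, not strategy.
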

\begin{proof}
The  restriction 
$$[X,Y]^G\stackrel{\cong}\lra [X,Y]^{N_G(K)}. $$
is induced by the projection map $G_+\sm_Ne^G_KS^0 \lra G/G_+$, 
which is an equivalence in geometric  $K$-fixed points by construction. 
\end{proof}

\subsection{From $G$-spectra at $K$}
Assembling the above information we may 
understand maps of $G$-spectra with geometric isotropy
$K$ in terms of their geometric $K$-fixed points. For brevity we 
write $N=N_G(K)$ and $W=W_G(K)$, and we consider the two maps
$$[X,Y]^G\stackrel{\res^G_N}\lra [X,Y]^N\stackrel{\Phi^K}\lra 
[X,Y]^W. $$

\begin{lemma}
\label{lem:resfix}
For $G$-spectra $X$ and $Y$ with geometric isotropy $K$, restriction to $N_G(K)$ and passage to geometric $K$-fixed points 
induce  isomorphisms 
$$[X,Y]^G\stackrel{\cong}\lra [e^G_KX, e^G_KY]^N \stackrel{\cong}\lra 
 [\Phi^KX, \Phi^KY]^{W_G(K)}$$
\end{lemma}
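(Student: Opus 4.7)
The proof plan is to chain together the two isomorphisms, each of which is already essentially available in the preceding material: the first is Lemma~\ref{lem:restonormalizer}, and the second is the isotropy-separation equivalence recorded in Subsection~\ref{subsec:GspectraoverK} applied to $\Gamma = N$ and $\Delta = K$ (so that $\Gamma/\Delta = W_G(K)$).

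The first isomorphism is immediate: since $X$ and $Y$ have $G$-geometric isotropy $K$, Lemma~\ref{lem:restonormalizer} gives $[X,Y]^G \cong [e^G_K X, e^G_K Y]^N$ directly.

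For the second isomorphism, I would argue as follows. First, compute the $N$-geometric isotropy of $e^G_K X$. As an $N$-spectrum, $X$ has geometric isotropy contained in $(K)_G \cap \sub(N) = \coprod_i (K_i)_N$, and the idempotent $e^G_K$ was chosen, using Lemma~\ref{lem:eKU}, precisely so that its Burnside-ring support $U$ contains $K$ but not the other $K_i$. Hence $e^G_K X$ has $N$-geometric isotropy equal to the single conjugacy class $(K)_N$. Since $K$ is normal in $N$, this isotropy certainly lies in $\langle \supseteq K\rangle$, so the equivalence from Subsection~\ref{subsec:GspectraoverK} applies and yields
$$[e^G_K X, e^G_K Y]^N \stackrel{\cong}\lra [\Phi^K e^G_K X, \Phi^K e^G_K Y]^{W_G(K)}.$$
It then remains to identify $\Phi^K e^G_K X \simeq \Phi^K X$ (and likewise for $Y$). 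This is where the choice of $U$ in Lemma~\ref{lem:eKU} does its second job: $K \in \Lct(U)$ and in fact $K \in U$, so $\Phi^K$ of the idempotent $e^G_K \in A(N)$ is $1 \in A(W_G(K))$, i.e.\ $e^G_K$ acts as the identity on $\Phi^K$ of any $N$-spectrum. This yields the desired equality on the right-hand side and completes the chain.

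The main obstacle, and where care is needed, is the idempotent bookkeeping in the second step: one must confirm both that $e^G_K$ kills the unwanted conjugates $(K_i)_N$ (so the isotropy of $e^G_K X$ is manageable) and that $e^G_K$ acts trivially on $\Phi^K$ (so no information is lost in passing to geometric fixed points). Both facts are guaranteed by the properties of $U$ established in Lemma~\ref{lem:eKU}; once they are in hand, the proof is a straightforward concatenation of Lemma~\ref{lem:restonormalizer} with the standard equivalence of Subsection~\ref{subsec:GspectraoverK}.
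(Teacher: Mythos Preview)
Your proposal is correct and follows essentially the same route as the paper's proof, which simply cites Lemma~\ref{lem:restonormalizer} for the first isomorphism and the equivalence of Subsection~\ref{subsec:GspectraoverK} for the second; you have just made explicit the idempotent bookkeeping that the paper leaves to the reader. One minor imprecision: you assert ``in fact $K\in U$'', but $U\subseteq \Phi N$ and $K\in\Phi N$ only when $W_G(K)$ is finite; the correct (and sufficient) statement is $K\in\Lct(U)$, which you also note, and this is exactly what guarantees $\Phi^K e^G_K\simeq \mathrm{id}$.
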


\begin{proof}
The first isomorphism is Lemma \ref{lem:restonormalizer} 
 and the second is the fact from Subsection \ref{subsec:GspectraoverK}. 
\end{proof}

\begin{cor}
\label{cor:min}
The category $\GspecK$ is a minimal localizing subcategory of the
category of $G$-spectra in the sense that it is generated by any
non-trivial element. 
\end{cor}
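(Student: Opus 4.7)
\noindent The plan is to reduce the statement to Corollary \ref{cor:freemin} applied to the Weyl group $W=W_G(K)$, by means of the $K$-geometric fixed point functor, which I claim carries $\GspecK$ faithfully into the category of free $W$-spectra.

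First, for any $Y\in \GspecK$, the $W$-spectrum $\Phi^KY$ is free. Indeed, for any $L\subseteq N:=N_G(K)$ with $L\supseteq K$, one has $\Phi^{L/K}(\Phi^KY)=\Phi^LY$, which vanishes unless $L\in (K)_G$; and if $L\supseteq K$ is $G$-conjugate to $K$, then comparing dimensions of identity components and orders of component groups forces $L=K$. In particular, $\Phi^KX$ is a non-trivial free $W$-spectrum whenever $X$ is non-trivial.

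Now fix a non-trivial $X\in \GspecK$ and an arbitrary $Y\in \GspecK$; I aim to show $Y$ lies in the localizing tensor ideal $\loct(X)\subseteq \Gspec$ generated by $X$. (Note that $\loct(X)$ is automatically contained in $\GspecK$, since smashing any object of $\GspecK$ with a $G$-spectrum preserves geometric isotropy at $(K)_G$.) Applying Corollary \ref{cor:freemin} to $W$ produces a transfinite construction of $\Phi^KY$ from $\Phi^KX$ inside $W$-spectra, using triangles, coproducts, retracts, and smash products with arbitrary $W$-spectra. The core of the plan is to lift this construction step by step to $\GspecK$. Triangles, coproducts, and retracts lift immediately, since $\Phi^K$ is fully faithful on $\GspecK$ by Lemma \ref{lem:resfix}. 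For a smash-with-$V$ step, I would realize $V\simeq \Phi^KZ$ via $Z:=G_+\sm_N \infl^N_W V$; by the formula $\Phi^K(G_+\sm_N A)\simeq \bigvee_i \gamma_i \Phi^{K_i}A$ recorded in Subsection \ref{subsec:GspectraoverK}, $\Phi^KZ$ splits as $V=\Phi^K(\infl^N_W V)$ together with contributions $\Phi^{K_i}(\infl^N_W V)$ indexed by the other $N$-conjugacy classes of $G$-conjugates $K_i$ of $K$ inside $N$. Each such $K_i$ satisfies $K_i\cap K\subsetneq K_i$ by dimension, so its image $K_iK/K$ in $W$ is non-trivial, and $\Phi^{K_i}(\infl^N_W V)=\Phi^{K_iK/K}V=0$ because $V$ is free. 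Hence $\Phi^KZ\simeq V$, and $Z\sm X\in \loct(X)$ realizes the required step on the $\GspecK$-side under $\Phi^K$.

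The principal obstacle will be justifying the vanishing of the off-diagonal contributions $\Phi^{K_i}(\infl^N_W V)$ for free $V$; this rests on the same observation as Lemma \ref{lem:eKU}, that distinct $G$-conjugates of $K$ inside $N$ have non-trivial image in $W$, and so act non-trivially on spectra inflated from $W$.
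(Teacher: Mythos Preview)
Your strategy is sound and genuinely different from the paper's, but there is one slip. In the smash-with-$V$ step of the construction of $\Phi^KY$ from $\Phi^KX$ as a localizing \emph{tensor ideal} in $W$-spectra, the factor $V$ is an \emph{arbitrary} $W$-spectrum, not a free one; so your vanishing argument for the off-diagonal contributions $\Phi^{K_i}(\infl^N_WV)\simeq\Phi^{K_iK/K}V$ does not apply as written. The fix is easy: since the running object $\Phi^KA$ is always free (being in the image of $\GspecK$ under $\Phi^K$), one has $V\sm\Phi^KA\simeq (EW_+\sm V)\sm\Phi^KA$, and $EW_+\sm V$ \emph{is} free. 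Replacing $V$ by $EW_+\sm V$ before forming $Z$ makes your off-diagonal vanishing go through verbatim, and then the lifted $\tilde Y\in\loct(X)$ satisfies $\Phi^K\tilde Y\simeq\Phi^KY$, whence $\tilde Y\simeq Y$ by Lemma~\ref{lem:resfix}.

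For comparison, the paper does not lift the construction through $\Phi^K$ step by step. It works instead on the $N$-side: the restriction $e^G_KX$ is non-trivial in $\NspecK\simeq\freeWspec$ and hence generates it; coinduction $F_N(G_+,-)$ (together with the projection formula) then shows that $F_N(G_+,e^G_KX)\in\loct(X)$ builds every $F_N(G_+,Y')$ for $Y'\in\NspecK$; and a separate argument, via the duals $DG/L_+$ and smashing with $\elr{(K)_G}$, shows that such coinduced objects already generate all of $\GspecK$. Your route is more direct in that it effectively exhibits $\Phi^K\colon\GspecK\to\freeWspec$ as an equivalence and transports minimality across it; the paper's route avoids analysing $\Phi^K$ on induced spectra and the attendant conjugates $K_i$, at the price of an additional generation argument on the $G$-side.
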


\begin{proof}
By Theorem \ref{cor:freemin} the category of $\NspecK\simeq \freeWspec$ is a
minimal localizing subcategory of $N$-spectra. 

Now if $X$ is a non-trivial $G$-spectrum with geometric isotropy $K$
then $e^G_KX$  is non-trivial as an $N$-spectrum and therefore
generates all $\NspecK$. We may coinduce this construction to see that 
$F_N(G_+, e^G_KX)$ builds any spectrum of the form $F_N(G_+,
Y)$ for $Y$ in $\NspecK$. 

If $\cF$ is any family, the collection of $\cF$-spectra is generated
as a localizing category by the cells $G/L_+$ with $L$ in $\cF$. It is
also generated as a localizing category by the duals $DG/L_+$ for
$L\in \cF$.  Accordingly, if $\cF$ consists of subconjugates of
$N$, the $\cF$-spectra are built by objects $F_N(G_+,Z)$. Now if 
$Z=Z'\sm \elr{K}_N$ we note 
$$F_N(G_+, Z)\simeq \elr{(K)_G}\sm  F_N(G_+, Z'). $$
Since any object of $\GspecK$  is of the form $\elr{(K)_G}\sm Y$ with 
$Y$ built using subconjugates of $K$, so we conclude that $\GspecK$ is generated by the objects $F_N(G_+,Y)$
for  $Y$ in $\NspecK$.
 \end{proof}

\begin{thm}
\label{thm:loctideals}
The localizing tensor ideals of $G$-spectra are precisely the unions
of the single geometric isotropy category spectra $\GspecK$. 

For any $G$-spectrum $X$,  
$$\loct (X)= \sum_{K\in \Ig (X)}\GspecK$$
\end{thm}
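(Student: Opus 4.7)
The plan is to establish the displayed formula $\loct(X) = \sum_{K \in \Ig(X)} \GspecK$, after which the general statement follows at once, because every localizing tensor ideal $\mcL$ is the sum of the ideals $\loct(X)$ over its objects, and the union of the $\Ig(X)$ is the natural index set. The two inputs I will use are Lemma \ref{lem:elrgen}, which builds $S^0$ from the cells $\elr{K}$, and Corollary \ref{cor:min}, which says $\GspecK$ is generated by any non-trivial element.

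For the inclusion $\sum_{K \in \Ig(X)} \GspecK \subseteq \loct(X)$, I would fix $K \in \Ig(X)$ and smash $X$ with $\elr{K}$. Because $\elr{K}$ has geometric isotropy exactly $\{(K)_G\}$ with $\Phi^K\elr{K} \simeq_1 S^0$, monoidality of the geometric fixed point functors gives $\Phi^K(X \sm \elr{K}) \simeq \Phi^K X$, which is non-trivial by assumption, while $\Phi^{K'}(X \sm \elr{K}) \simeq 0$ for $(K')_G \neq (K)_G$. Hence $X \sm \elr{K}$ is a non-trivial object of $\GspecK$, and by Corollary \ref{cor:min} it generates $\GspecK$ as a localizing subcategory, a fortiori as a localizing tensor ideal. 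Since $\loct(X)$ is closed under smashing with an arbitrary spectrum, $X \sm \elr{K} \in \loct(X)$, and therefore $\GspecK \subseteq \loct(X)$.

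For the reverse inclusion I would appeal to the proof of Lemma \ref{lem:elrgen}: inspection shows it in fact exhibits $S^0$ as an object of $\loc(\elr{K} \st K \subseteq G)$, using only cofibre sequences and a sequential homotopy colimit, and never invoking smash products. Smashing the entire construction with $X$ therefore yields $X \simeq X \sm S^0 \in \loc(X \sm \elr{K} \st K \subseteq G)$. The generators with $K \notin \Ig(X)$ are contractible, since all their geometric fixed points vanish, and those with $K \in \Ig(X)$ already lie in $\GspecK$ by the computation above; hence $X$ lies in $\sum_{K \in \Ig(X)} \GspecK$. The only point requiring care throughout is the observation in this last paragraph that the construction in Lemma \ref{lem:elrgen} may be carried out inside $\loc$ rather than $\loct$, so that smashing with $X$ transports it faithfully. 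Once that is verified, no further subtleties arise, and the passage from the single-generator formula to the classification of all localizing tensor ideals is purely formal.
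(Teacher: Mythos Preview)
Your proof is correct and follows essentially the same route as the paper: both directions use exactly the ingredients you name, namely that $\elr{K}\sm X$ is a non-trivial object of $\GspecK$ (whence Corollary~\ref{cor:min} gives $\GspecK\subseteq\loct(X)$), and that Lemma~\ref{lem:elrgen} lets one recover $X$ from the pieces $X\sm\elr{K}$. Your extra care about $\loc$ versus $\loct$ is harmless but not strictly needed: even from $S^0\in\loct(\elr{K}\st K)$ one gets $X\in\loct(X\sm\elr{K}\st K)$, since smashing a localizing tensor ideal with $X$ lands in the localizing tensor ideal generated by the smashed generators.
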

 
\begin{proof}
The $G$-spectrum $\elr{K}$ has geometric isotropy precisely $(K)_G$. Thus
if $K\in \Ig (X)$ we find an object $\elr{K}\sm X \in \loct(X)$ with
geometric isotropy precisely $K$. Hence by the Minimality Theorem
\ref{cor:min}, $\loct(X)$ contains $\GspecK$. 

It follows from Lemma \ref{lem:elrgen} that $X$ lies in the localizing tensor ideal
generated
by $\elr{K} \sm X$ for all $K$. 
\end{proof}

\part{Finite spectra}

\section{The Localization Theorem}
\label{sec:locthm}

The main ingredient in understanding containment of primes is the Localization Theorem.

We revisit some basic facts from transformation groups in our
language. The basic tool is Borel
cohomology, $H^*_G(X):=H^*(EG\times_G X, EG\times_G pt)=H^*(EG_+\sm_GX)$.

The idea is that for finite spectra, geometric isotropy is
determined by Borel cohomology. It then follows from the Localization
Theorem that the geometric isotropy is closed under passage to cotoral
subgroups.

\begin{lemma}
\label{lem:fpBorel}
If $K$ is connected and $X$ is a  finite $K$-CW-complex, then $X$ is non-equivariantly contractible if and
only if $H^*_K(X)=0$. 
\end{lemma}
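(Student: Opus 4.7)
I would run the Serre spectral sequence of the Borel fibration $X \to EK\times_K X \to BK$. Connectedness of $K$ makes $BK$ simply connected (so the cohomology local system on $X$ is trivial) and makes $H^*(BK;\Q)$ a polynomial algebra on generators in even degrees; these two facts do all the work.

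The implication $X \simeq_1 \ast \Rightarrow H^*_K(X) = 0$ is routine: a rationally contractible fibre makes $EK\times_K X \to BK$ a rational equivalence, so $H^*_K(X) = H^*(EK\times_K X,\, BK) = 0$.

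For the converse, assume $H^*_K(X;\Q) = 0$, which says the section $s\colon BK \to EK\times_K X$ (using the $K$-fixed basepoint) induces an isomorphism on rational cohomology. In the spectral sequence $E_2^{p,q} = H^p(BK)\otimes H^q(X) \Rightarrow H^{p+q}(EK\times_K X)$, the splitting $s^*$ forces $E_\infty^{p,0} = H^p(BK)$ with no differentials hitting this row, and then dimension comparison with the abutment gives $E_\infty^{p,q} = 0$ for every $q > 0$. Suppose now, for contradiction, that $\tilde H^*(X;\Q) \neq 0$, and let $m \geq 1$ be minimal with $\tilde H^m(X) \neq 0$. By minimality the groups $E_2^{r,\,m-r+1}$ vanish for $2 \leq r \leq m$, so the only possibly nontrivial differential out of $E_2^{0,m} = \tilde H^m(X)$ is the transgression
\[
d_{m+1} \colon \tilde H^m(X) \longrightarrow E_{m+1}^{m+1,0} = H^{m+1}(BK).
\]
Vanishing of $E_\infty^{0,m}$ forces $d_{m+1}$ to be injective, while preservation of $E_\infty^{m+1,0} = H^{m+1}(BK)$ (no incoming differential is allowed to shrink this row) forces the image of $d_{m+1}$ to vanish; these two conclusions contradict, so $\tilde H^m(X) = 0$ and therefore $\tilde H^*(X;\Q) = 0$, i.e.\ $X \simeq_1 \ast$.

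The main obstacle I expect is excluding some delicate cancellation of $\tilde H^m(X)$ by higher differentials; this is exactly what the minimum-degree choice handles, with the polynomial (even-degree) structure of $H^*(BK;\Q)$ as the decisive algebraic input --- in the case $m$ even it even gives $H^{m+1}(BK)=0$ and so $d_{m+1}=0$ automatically, yielding the contradiction more directly.
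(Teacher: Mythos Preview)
Your argument is correct, and it takes a different route from the paper for the harder implication. For $H^*_K(X)=0 \Rightarrow H^*(X)=0$, the paper invokes the Eilenberg--Moore equivalence
\[
C^*(X)\simeq C^*(EK\times_K X)\otimes^L_{C^*(BK)}\Q
\]
(valid because $K$ is connected, so $BK$ is simply connected), which gives the conclusion in one line. You stay entirely inside the Serre spectral sequence: the basepoint section makes $\pi^*$ split injective, so the edge map $E_2^{p,0}\to E_\infty^{p,0}$ is an isomorphism and no differential can hit the bottom row; the hypothesis then forces $E_\infty^{p,q}=0$ for $q>0$ by a dimension count over $\Q$; and the transgression out of the lowest nonvanishing $\tilde H^m(X)$ is trapped between being injective (to kill $E_\infty^{0,m}$) and having zero image (to preserve $E_\infty^{m+1,0}$). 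Your approach is more elementary and self-contained, needing only the Serre spectral sequence; the paper's is shorter but imports a heavier theorem.

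One small remark: your closing paragraph names the even-degree polynomial structure of $H^*(BK;\Q)$ as ``the decisive algebraic input,'' but in fact your main argument never uses it. The section argument alone (needing only the $K$-fixed basepoint) already forces all differentials into the bottom row to vanish, irrespective of parity. The observation that $H^{m+1}(BK)=0$ when $m$ is even is a pleasant shortcut in that case, but it is not load-bearing for the proof as you have written it.
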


\begin{proof}
First, if $X$ is simply connected,  the Hurewicz theorem shows $X\simeq *$ if
and only if $H_*(X)=0$. This is equivalent to $H^*(X)=0$. 

Next, we have a fibration $X\lra EK\times_K X \lra BK$ so the Serre spectral
sequence shows that if $H^*(X)=0$ then also $H_K^*(X)=0$. Conversely,
since $K$ is connected the Eilenberg-Moore theorem gives
$$C^*(X)\simeq C^*(EK\times_K X)\tensor_{C^*(BK)} \Q,   $$
(where the tensor product is derived, and the cochains are unreduced). This shows that $H^*_K(X)=0$ implies $H^*(X)=0$. 
\end{proof}

It follows that $\Ig (X)$ can be detected from Borel cohomology of
fixed points. 

\begin{cor}
\label{cor:fpBorel}
If $X$ is finite, $K\in \Ig (X)$ if and only if
$H^*_{W_G^e(K)}(\Phi^KX)\neq 0$.\qqed
\end{cor}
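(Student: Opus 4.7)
The plan is to reduce directly to Lemma \ref{lem:fpBorel} applied to the \emph{connected} group $W_G^e(K)$. By definition, $K\in \Ig(X)$ means $\Phi^K X\not\simeq_1 0$, so we need to characterize non-equivariant contractibility of $\Phi^K X$ in terms of its Borel cohomology over $W_G^e(K)$.

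First I would record that $\Phi^K X$ is a finite $W_G(K)$-spectrum: since $X$ is finite and $\Phi^K$ is a symmetric monoidal triangulated functor which sends each orbit cell $G/H_+$ to a finite wedge of $W_G(K)/L_+$ for various $L$ (only finitely many orbit types fix $K$), finiteness is preserved. Restricting the action along $W_G^e(K)\hookrightarrow W_G(K)$ then makes $\Phi^K X$ a finite $W_G^e(K)$-spectrum, and non-equivariant contractibility is unaffected by this restriction.

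Next I would apply Lemma \ref{lem:fpBorel} with the group $W_G^e(K)$, which is connected by construction. The lemma as stated is for finite $K$-CW complexes, so the one small point to check is the extension to finite spectra: writing the finite $W_G^e(K)$-spectrum $\Phi^K X$ as $\Sigma^{-n}Y$ for a finite based $W_G^e(K)$-CW complex $Y$ (choosing $n$ large enough that the connectivity hypothesis of the Hurewicz/Serre argument in Lemma \ref{lem:fpBorel} applies to $Y$), both conditions are transferred unchanged under the desuspension, since $\Phi^K X\simeq_1 0$ iff $Y\simeq_1 *$ and $H^*_{W_G^e(K)}(\Phi^K X)$ is just a shift of $H^*_{W_G^e(K)}(Y)$. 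Hence $\Phi^K X\simeq_1 0$ if and only if $H^*_{W_G^e(K)}(\Phi^K X)=0$.

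Combining the two equivalences yields the statement. The only real content beyond Lemma \ref{lem:fpBorel} is that we may pass to the identity component of the Weyl group (legitimate because both sides of the claimed equivalence are insensitive to the component group action), and the mildly fiddly stabilization from spaces to spectra; no calculation is required.
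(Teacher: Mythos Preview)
Your proposal is correct and is precisely the argument the paper has in mind: the corollary is marked with \qqed and no proof, indicating it is an immediate consequence of Lemma~\ref{lem:fpBorel} applied to the connected group $W_G^e(K)$ acting on the finite spectrum $\Phi^K X$. You have simply spelled out the routine details (finiteness of $\Phi^K X$, passage to the identity component, and desuspension to reduce to the space-level statement) that the paper suppresses.
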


For us the fundamental fact is the following consequence of the Localization Theorem. 

\begin{prop}
\label{prop:Igctclosed}
If $X$ is finite then 
$\Ig (X)$ is closed under passage to
cotoral subgroups. 
\end{prop}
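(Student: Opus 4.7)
The plan is to reduce the statement to the classical Localization Theorem for a torus acting on a finite spectrum. Suppose $K\in \Ig(X)$ and $L$ is cotoral in $K$, so that $L$ is normal in $K$ with $T:=K/L$ a torus. Set $Y:=\Phi^L X$; since $\Phi^L$ preserves finiteness, $Y$ is a finite $N_G(L)/L$-spectrum, and in particular a finite $T$-spectrum via the inclusion $T\subseteq N_G(L)/L$. Composition of geometric fixed points gives $\Phi^T Y\simeq \Phi^K X$, which is non-equivariantly essential by hypothesis. To conclude $L\in \Ig(X)$, it suffices to show that $Y$ is non-equivariantly essential. Thus the proposition reduces to the following claim: \emph{if $T$ is a torus and $Y$ is a finite $T$-spectrum with $\Phi^T Y\not\simeq_1 0$, then $Y\not\simeq_1 0$.}

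I would prove the claim by contradiction. Suppose $Y\simeq_1 0$. Since $T$ is connected, Lemma \ref{lem:fpBorel} gives $H^*_T(Y)=0$. The Borel--Hsiang--Quillen Localization Theorem, in its form for finite $T$-spectra, asserts that the comparison map
$$H^*_T(Y)\lra H^*_T(\Phi^T Y)$$
becomes an isomorphism after inverting the multiplicative set $\mathcal{E}$ generated by the Euler classes of the non-trivial one-dimensional $T$-representations. The right-hand side, however, is $H^*_T(\Phi^T Y)\cong H^*(BT)\tensor H^*(\Phi^T Y)$ since $T$ acts trivially on $\Phi^T Y$; the factor $H^*(\Phi^T Y)$ is nonzero because $\Phi^T Y\not\simeq_1 0$, and the localization $H^*(BT)[\mathcal{E}^{-1}]$ is a nonzero ring (it is a localization of a polynomial algebra). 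Hence $H^*_T(\Phi^T Y)[\mathcal{E}^{-1}]\neq 0$, contradicting $H^*_T(Y)[\mathcal{E}^{-1}]=0$. This proves the claim, hence the proposition.

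The main obstacle is invoking the correct stable formulation of the Localization Theorem: one needs that for a finite $T$-spectrum $Y$ with $T$ a torus, smashing with the standard cofibre sequence $E\mathcal{P}_+\to S^0\to \widetilde{E\mathcal{P}}$ (where $\mathcal{P}$ is the family of proper closed subgroups of $T$) becomes $\mathcal{E}$-locally split, so that $Y[\mathcal{E}^{-1}]\simeq \widetilde{E\mathcal{P}}\sm Y[\mathcal{E}^{-1}]$, and that the $T$-fixed points of the right-hand side compute $\Phi^T Y$ with a free $H^*(BT)[\mathcal{E}^{-1}]$-action on Borel cohomology. Once this is in hand, the routine finiteness ingredient that $\Phi^L$ takes finite $G$-spectra to finite $N_G(L)/L$-spectra, together with Lemma \ref{lem:fpBorel}, packages the rest of the argument.
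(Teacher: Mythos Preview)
Your proof is correct and follows essentially the same route as the paper: reduce to the case of a torus $T$ acting on the finite spectrum $Y=\Phi^LX$, identify $\Phi^TY\simeq\Phi^KX$ via iterated geometric fixed points, and then invoke the Borel--Hsiang--Quillen Localization Theorem together with Lemma~\ref{lem:fpBorel} to pass from $\Phi^TY\not\simeq_1 0$ to $Y\not\simeq_1 0$. The only cosmetic differences are that the paper argues directly rather than by contradiction, and sketches the $S^{\infty V(T)}$ mechanism behind the stable Localization Theorem where you instead describe it via the $E\mathcal{P}_+\to S^0\to\widetilde{E\mathcal{P}}$ cofibre sequence.
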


\begin{proof}
The Localization Theorem states that if $T$ is a torus and $Y$ is a
finite $T$-CW-complex then 
$$H^*_T(Y)\lra H^*_T(\Phi^T Y )=H^*(BK)\tensor H^*(\Phi^T Y)$$
becomes an isomorphism when the multiplicatively closed set $\cE_T =\{
e(W)\st W^T=0\} $ of Euler classes $e(W)\in H^{|W|}(BT)$ is
inverted. The proof uses the fact that the $T$-space
$$S^{\infty V(T)}=\bigcup_{W^T=0}S^W, $$
has $H$-fixed points $S^0$ if $H=T$ and is contractible
otherwise. Accordingly, we have a $T$-equivalence
$$Y\sm S^{\infty V(T)}\simeq \Phi^TY \sm S^{\infty V(T)}. $$ 

 It follows that if
$H^*(\Phi^TY)\neq 0$ then also $H^*_T(Y)\neq 0$. 

Now suppose $H\in \Ig(X)$. Let $Y=\Phi^KX$ and let $T=H/K$. The
hypothesis states $\Phi^TY=\Phi^{H/K}\Phi^KX\not \simeq *$ so that 
so that $H^*_T(\Phi^TY) \neq 0$. By  the localization theorem it
follows that $H^*_T(Y)\neq 0$, and  from Lemma \ref{cor:fpBorel}
$\Phi^KX=Y\neq 0$. Hence also $K \in \Ig (X)$. 
\end{proof}

The Localization Theorem explains the importance of the cotoral
ordering. The first half is as follows. 
 
\begin{cor}
If $K\leq H$ then $\wp_K\subseteq \wp_H$. \qqed
\end{cor}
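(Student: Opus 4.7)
The statement is essentially a direct translation of Proposition \ref{prop:Igctclosed} into the language of Balmer primes, so my plan is to simply unwind the definitions and invoke that proposition via the contrapositive.

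First I would recall that, by definition, $X \in \wp_K$ means $\Phi^K X \simeq_1 0$, equivalently $K \notin \Ig(X)$. Thus the containment $\wp_K \subseteq \wp_H$ is equivalent to the implication: for every finite $G$-spectrum $X$, if $H \in \Ig(X)$ then $K \in \Ig(X)$.

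Next, I would fix $X$ a finite $G$-spectrum (since the Balmer primes live in $\Gspec^c$) and assume $H \in \Ig(X)$. By hypothesis $K \leq H$, i.e.\ $K$ is normal in $H$ with $H/K$ a torus. Proposition \ref{prop:Igctclosed} states that $\Ig(X)$ is closed under passage to cotoral subgroups, so from $H \in \Ig(X)$ and $K \leq H$ we conclude $K \in \Ig(X)$. This gives exactly the required implication, and hence $\wp_K \subseteq \wp_H$.

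Since the entire content is already packaged into Proposition \ref{prop:Igctclosed}, there is no real obstacle here; the only thing to be careful about is that $\wp_K$ is defined on $\Gspec^c$, so the hypothesis of finiteness needed for the Localization Theorem argument in \ref{prop:Igctclosed} is automatically satisfied. The proof is therefore a one-line deduction.
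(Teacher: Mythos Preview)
Your proposal is correct and matches the paper's approach: the corollary is marked \qqed in the paper precisely because it is an immediate unwinding of Proposition~\ref{prop:Igctclosed}, and your argument is exactly that unwinding.
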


We will see in Lemma \ref{lem:primeordercotoralorder} below that the
reverse implication also holds.

\section{Burnside rings and basic cells}
\label{sec:basiccells}
A distinctive feature of working rationally is that there are usually  many
idempotents in the rational Burnside ring. We follow
through the implications of this for cell structures.

Integrally, the relevant cells are homogeneous spaces
$G/K_+$, and the relevant ordering of subgroups is classical containment. 
Rationally, the splitting of Burnside rings means that the
relevant cells are certain basic cells $\sigma_{K, U}$ (a retract of
$G/K_+$ introduced below) and the
relevant ordering of subgroups is cotoral inclusion. 

We begin by running through one approach to {\em classical} equivariant cell
complexes, and then introduce basic cells and  follow the same pattern to give the rational
analysis in terms of {\em basic} cells.

\subsection{Unstable classical recollections}
Classically, we are used to the idea that based $G$-spaces $P$ are formed from
cells $G/K_+$. The classical unstable  isotropy is defined by 
$$\Iun' (P)=\{ K \st \Phi^K P \neq * \}; $$
 it is not homotopy invariant, but it does have the obvious
property that it is closed under passage to subgroups. 

It is natural to move to a homotopy invariant notion 
$$\Iun (P)=\{ K \st  \Phi^K P \not \simeq  *\} . $$ 
This notion fits well with the cells we use, since
$$\Iun (G/K_+)=\{ L \st L\subseteq_G K\} =\Lcl((K)_G). $$
Note that we are linking  notions of cell and isotropy with
a partial order on subgroups.

The homotopy invariant version of unstable isotropy may not be closed under passage to subgroups, so that we only
know that $X$ is equivalent to a complex constructed from cells
$G/K_+$ with $K\in \Lcl\Iun (T)$, where $\Lcl$
indicates that we take the closure under the classical order (i.e.,
under containment). This can be proved by killing homotopy groups, or
by the method described in the next subsection for the stable
situation. 

\subsection{Stable classical recollections}
Moving to the stable world, for a $G$-spectrum $X$ we have the
 {\em geometric}  (or {\em stable}) isotropy
$$\Ig (X)=\{ K \st  \Phi^K X \not \simeq_1  0\} , $$
homotopy invariant by definition. Evidently since geometric fixed
points extend ordinary fixed points on spaces,  
$$\Iun (P) \supseteq \Ig (\Sigma^{\infty}P), $$
and if $P$ can be constructed from cells in $A$ then $\Sigma^{\infty}P
$ can be constructed from stable cells in $A$.

The attraction of geometric isotropy and geometric fixed points arises from 
the fact that their properties are familiar from the category of based spaces. 
Perhaps the most important instance is that of the Geometric Fixed Point Whitehead 
Theorem, and we state it here because it is fundamental to our approach. The result is
well known to all users of geometric fixed points, and is usually deduced
using isotropy separation to see that that an equivalence in all geometric fixed 
points is an equivalence in all categorical fixed points. 

\begin{lemma} {\em (Geometric Fixed Point Whithead Theorem)}
A map $f: X\lra Y$ of $G$-spectra is an equivalence if $\Phi^Kf: \Phi^KX\lra \Phi^KY$ 
is a non-equivariant equivalence for all closed subgroups $K$.\qqed
\end{lemma}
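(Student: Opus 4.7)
The plan is to analyse the fibre $F=\mathrm{fib}(f)$ and show $F\simeq 0$, whence the original map $f$ is an equivalence. Since $\Phi^K$ is triangulated, the hypothesis gives $\Phi^K F\simeq_1\mathrm{fib}(\Phi^K f)\simeq_1 0$ for every closed subgroup $K$, so $\Ig(F)=\emptyset$. It therefore suffices to prove the following reformulation: any $G$-spectrum $Z$ with $\Phi^K Z\simeq_1 0$ for every closed subgroup $K$ is contractible.

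By Lemma \ref{lem:elrgen}, the objects $\elr{K}$ generate $\Gspec$ as a localizing tensor ideal, so $Z\simeq S^0\sm Z$ lies in the localizing subcategory generated by the family $\{\elr{K}\sm Z\}_K$. It is therefore enough to show each $\elr{K}\sm Z$ is contractible. Fix $K$. Since $\Ig(\elr{K})=\{(K)_G\}$ and $\Phi^H$ is a tensor functor, the smash product $\elr{K}\sm Z$ lies in $\GspecK$, and
$$\Phi^K(\elr{K}\sm Z)\simeq_1 \Phi^K\elr{K}\sm \Phi^K Z\simeq_1 S^0\sm 0\simeq 0.$$

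Now suppose for contradiction that $\elr{K}\sm Z\not\simeq 0$. By the Minimality Theorem (Corollary \ref{cor:min}), $\elr{K}\sm Z$ would generate all of $\GspecK$ as a localizing subcategory, so in particular $\elr{K}\in\loc(\elr{K}\sm Z)$. Applying the triangulated, coproduct-preserving functor $\Phi^K$ sends $\loc(\elr{K}\sm Z)$ into the localizing subcategory generated by $\Phi^K(\elr{K}\sm Z)\simeq 0$, namely $\{0\}$; thus $\Phi^K\elr{K}\simeq_1 S^0$ would be contractible, an absurdity. Hence $\elr{K}\sm Z\simeq 0$ for every $K$, which yields $Z\simeq 0$.

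The main obstacle is essentially cosmetic: one needs the standard fact that $\Phi^K$ preserves arbitrary wedges as well as triangles, so that vanishing on a generator of a localizing subcategory propagates to the whole subcategory. With that in hand, the argument is a clean application of Lemma \ref{lem:elrgen} together with the minimality result Corollary \ref{cor:min}; the essential content of the Whitehead theorem has already been packaged into those two statements, so this approach is arguably preferable to the classical isotropy-separation induction over families.
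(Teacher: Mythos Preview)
Your argument is circular. The paper does not actually prove the Geometric Fixed Point Whitehead Theorem: it is placed in the ``Stable classical recollections'' subsection and stated as a well-known background fact, with the remark that it is ``usually deduced using isotropy separation to see that an equivalence in all geometric fixed points is an equivalence in all categorical fixed points.'' The results of Part~I that you invoke already depend on it.

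Concretely, look at the last paragraph of the proof of Corollary~\ref{cor:min}. It asserts that ``any object of $\GspecK$ is of the form $\elr{(K)_G}\sm Y$ with $Y$ built using subconjugates of $K$.'' Unpacking this for $X\in\GspecK$, one needs both $\tilde{E}\{\subseteq_G K\}\sm X\simeq 0$ and $E\{\subset_G K\}_+\sm X\simeq 0$; each of these spectra has empty geometric isotropy, so the conclusion is an instance of ``a $G$-spectrum with $\Phi^L$ trivial for all $L$ is contractible'' --- exactly the reformulation you set out to prove. (The same dependence appears already in Lemma~\ref{lem:restonormalizer}, where a map is declared an equivalence after checking only $\Phi^K$.) Thus your closing sentence is correct but points the wrong way: the essential content of Whitehead has indeed been packaged into Corollary~\ref{cor:min}, and that is precisely why you cannot use that corollary to establish Whitehead. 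The standard isotropy-separation induction over families, reducing geometric fixed points to categorical fixed points, is logically prior to everything in this paper and cannot be replaced by an appeal to its consequences.
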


Next, we note that
$$\Ig (G/K_+) =\Lcl((K)_G).  $$
Any $G$-spectrum $X$ can be constructed from stable cells $G/K_+$ with
$K \in \Lcl \Ig (X)$. One way to prove this is to construct a filtration
analogous to the (thickened) fixed point filtration of a space.
Simplifying this, if $\cF =\Lcl\Ig (X)$ then $X\sm
\tilde{E}\cF$ has trivial   geometric fixed points (and is thus contractible by the Geometric Fixed Point
Whitehead Theorem). Now $X \sm E\cF_+$ may be constructed from cells
$G/K_+$ for $K \in \cF$. In effect we use the result for the special
case $E\cF_+$ together with the fact that $G/H_+ \sm G/K_+$ can be
constructed from cells $G/K'_+$ with $K'\subseteq K$.

We now see how we can to take advantage of the additional flexibility
of working rationally.

\subsection{The Burnside ring}
We recall tom Dieck's determination of the rational Burnside ring \cite{tD}. 
To any self-map $f:S^0\lra S^0$ we may associate the {\em mark}
$$m(f):\sub (G)\lra \Q$$
defined by 
$$m(f)(K)=\deg (\Phi^K f:S^0\lra S^0). $$
Obviously this is constant on conjugacy classes so we can view it as a
function on $\sub(G)/G$. More significantly, 
by the Localization Theorem if $L$ is cotoral in $K$ we have
$m(f)(L)=m(f)(K)$. Accordingly $m(f)$ is determined by its restriction
to the space of cotorally maximal conjugacy classes. Since every
infinite compact Lie group has a non-trivial torus,
$\ctmax(\sub(G)/G)$ is the space $\Phi G$ of conjugacy classes of subgroups of finite
index in their normalizers. We may topologize $\Phi G$ as the quotient
space of a space of closed subgroups with the Hausdorff metric
topology, and as such $m(f)$ is continuous. The mark homomorphism
$$m: [S^0,S^0]^G\stackrel{\cong}\lra C(\Phi G, \Q)$$
is an isomorphism of rings. 

 When $G$ is the product of a torus and a finite group $Q$
 this means  $A(G)\cong \prod_{(\overline{K})}\Q$, with the product
over conjugacy classes of subgroups $\overline{K}$ of
$\overline{G}=Q$. Accordingly we obtain one primitive
idempotent $e_{\overline{L}}$ for each conjugacy class of subgroups of $Q$.

In general $\Phi G$ is compact, Hausdorff and totally disconnected. Its topology
has a basis of open and closed sets $U$. Idempotents of $A(G)$
correspond to open and closed subsets: to each $U$ we write 
 $e_U$ for the idempotent with $U$ as support. 

\subsection{Basic cells}
\label{subsec:basic}
The classical cell $G/K_+$  is $S^0$ induced up from $K$, so if the
$K$-equivariant sphere decomposes, so does $G/K_+$. 

The building blocks are thus the {\em basic cells} 
$$\sigma_{K,U}:=G_+\sm_Ke^K_US^0, $$
where $U$ is an open and closed neighbourhood of $K$ in $\Phi K$.

\begin{remark}
If $K$ is isolated in $\Phi K$, we write $\sigma_K=\sigma_{K,
  \{K\}}$.  When $G$
is a torus,  all subgroups are of this type, and in general all 
subgroups of a  maximal torus are of this type. 
\end{remark}

We  develop cell structures  based on basic cells. The advantage  is that
the geometric isotropy of $\sigma_{K, U}$  is smaller than that of $G/K_+$.
The advantage is clearest when  $K$ is isolated in $\Phi K$, but in
general we can achieve similar results by letting $U$ range over
smaller and smaller neighbourhoods of $K$.

\begin{lemma}
\label{lem:gisigmaK} 
The geometric isotropy of $\sigma_{K, U}$ consists of all subgroups
$G$-conjugate to a subgroup cotoral in an element of $U$:
$$\Ig (\sigma_{K,U})=\Lct (U_G). $$
\end{lemma}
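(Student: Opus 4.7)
The plan is to compute $\Phi^L(\sigma_{K,U})$ directly for an arbitrary closed subgroup $L \leq G$, using the induction formula for geometric fixed points together with the mark homomorphism.

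First I would apply the decomposition recorded in the Remark following the restriction lemma: with $H = K$ in the induction, we have
$$\Phi^L(\sigma_{K,U}) = \Phi^L\bigl(G_+ \sm_K e^K_U S^0\bigr) \simeq \bigvee_i \gamma_i \Phi^{L_i}(e^K_U S^0),$$
where $L_i = L^{\gamma_i}$ ranges over $K$-conjugacy-class representatives of those $G$-conjugates of $L$ that lie inside $K$. Thus $L \in \Ig(\sigma_{K,U})$ if and only if there is some $G$-conjugate $L_i \subseteq K$ with $\Phi^{L_i}(e^K_U S^0) \not\simeq_1 0$, and the problem reduces to a condition on subgroups of $K$.

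Next I would analyze $\Phi^{L_i}(e^K_U S^0)$. Since $e^K_U \in A(K)$ is an idempotent and $e^K_U S^0$ is its telescope, $\Phi^{L_i}(e^K_U S^0)$ is the telescope of the idempotent $\Phi^{L_i}(e^K_U) \colon S^0 \to S^0$, whose degree is the mark $m(e^K_U)(L_i) \in \{0,1\}$. So $\Phi^{L_i}(e^K_U S^0) \simeq S^0$ when $m(e^K_U)(L_i) = 1$ and is contractible otherwise. By tom Dieck's description, $e^K_U$ is defined precisely so that its mark is the characteristic function of $U \subseteq \Phi K$ when restricted to Weyl-finite subgroups.

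The key geometric input is now the Localization Theorem via Proposition~\ref{prop:Igctclosed}: marks are constant along cotoral specializations. Every subgroup $L_i \leq K$ is cotoral in its ``saturation'' $L_i' \in \cF K$ (where $L_i'/L_i$ is a maximal torus of $N_K(L_i)/L_i$), and $m(e^K_U)(L_i) = m(e^K_U)(L_i') = 1$ exactly when $(L_i')_K \in U$. Equivalently, $m(e^K_U)(L_i) = 1$ iff $L_i$ is cotoral in some subgroup of $K$ representing a class in $U$.

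Combining the steps, $L \in \Ig(\sigma_{K,U})$ iff some $G$-conjugate of $L$ is cotoral in an element of $U$, which is precisely the definition of $\Lct(U_G)$. The only step requiring care is the last one: I need to know that cotoral inclusion in $K$ agrees with cotoral inclusion in $G$ on the relevant subgroups (immediate from the definition, since $U$ consists of subgroups of $K$ and cotorality is intrinsic to the pair $(L, L')$), and I need to match conjugacy bookkeeping so that varying over $i$ in the wedge decomposition corresponds exactly to varying over $G$-conjugates of $L$ lying in $K$. This is the most delicate bookkeeping, but no harder than in the preceding lemmas on restriction.
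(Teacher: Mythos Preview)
Your argument is correct. The key ingredients---the mark description of $e^K_U$, the constancy of marks along cotoral inclusions (from the Localization Theorem), and the behavior of geometric fixed points on induced spectra---are exactly those the paper uses, so in that sense the two proofs are close cousins.

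The organization differs, though, and it is worth noting what each buys. You compute $\Phi^L(\sigma_{K,U})$ in one stroke via the wedge formula $\Phi^L(G_+\sm_K Y)\simeq\bigvee_i\gamma_i\Phi^{L_i}Y$ and then read off both inclusions from the single statement ``$m(e^K_U)(L_i)=1$ iff $L_i$ is cotoral in an element of $U$''. The paper instead argues the two containments separately: for $\Lct(U_G)\subseteq\Ig(\sigma_{K,U})$ it observes that $e_US^0$ is a $K$-retract of the induced spectrum and then invokes Proposition~\ref{prop:Igctclosed} to close under cotoral specialization; for the reverse inclusion it replaces $L$ by its cotoral saturation in $\cF K$, finds a clopen $V\subseteq\Phi K$ with $V\cap U=\emptyset$ containing it, and kills $\sigma_{K,U}$ by the orthogonal idempotent $e_V$. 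Your direct computation is arguably cleaner and makes the double-coset bookkeeping explicit, whereas the paper's orthogonal-idempotent trick avoids writing out the wedge but hides the same conjugacy bookkeeping inside the assertion $\Phi^L(G_+\sm_KY)\simeq\Phi^L(G_+\sm_Ke_VY)$.
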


\begin{proof}
If $L\in U$ then $\Phi^{L} e_US^0\simeq_1 S^0$, and $Y$ is a
$K$-equivariant retract of $G_+\sm_KY$. Hence $L\in
\Ig(\sigma_{K,U})$. By Proposition \ref{prop:Igctclosed}, $\Lct (U)\subseteq \Ig
(\sigma_{K,U})$.

Conversely, we show $\Ig(\sigma_{K,U})\subseteq \Lct (U_G)$. 
Certainly $\Ig(\sigma_{K,U})\subseteq \Ig(G/K_+)\subseteq
\Lcl((K)_G)$. Next, if $L$ is a subgroup of $K$ not $G$-conjugate to
a subgroup cotoral in a subgroup in $U$ then there is an idempotent
$e$ orthogonal to $e_U$. Indeed, we may suppose $L$ is of finite index
in its normalizer (or else we replace it by the inverse image of the
maximal torus in $W_G(L)$ in $N_G(L)$). By hypothesis $(L)_G\cap
U=\emptyset$, and hence there is an open and closed subset $V$
containing it and still disjoint from $U$. Now
$\Phi^L(G_+\sm_KY)\simeq \Phi^L(G_+\sm_Ke_VY)$ and $e_Ve_US^0\simeq
0$.
\end{proof}

\section{Applications of basic cells}
With a little additional work on the topology of the space of
subgroups, basic cells are extremely valuable. 

\subsection{The $f$-topology}
We recall the $f$-topology on $\sub(G)$ from \cite{ratmack}. We write 
$d$ for  the Hausdorff metric on the space of closed subgroups, and if 
 $H$ is a closed subgroup of $G$ we consider 
$$O(H, \eps)=\{ K \subseteq H\st |W_H(K)|<\infty, d(H,K)<\eps\}$$
(i.e. only considering subgroups of finite index in their 
normalizers). A base of 
neighbourhoods of a subgroup $H$ in $\sub (G)$ consists of the sets 
$$O(H,A, \eps)=\bigcup_{a\in A}O(H,\eps)^a$$
where $A$ runs through neighbourhoods of the identity in $G$ and $\eps 
>0$. This topology induces the quotient topology on the space $\sub 
(G)/G$ of conjugacy classes, which we again call the $f$-topology. 

\begin{lemma}
The image $U_H^{\eps}$ of  $O (H,\eps)$ in $\Phi H$ is open and closed. The set 
of maximal elements in $\Ig (\sigma_{H, U_H^{\eps}})$ is the image of 
$O(H, \eps)$ in $\sub (G)/G$. \qqed
\end{lemma}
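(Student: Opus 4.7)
The plan is to verify the openness/closedness and maximality assertions in turn, with the second being the more delicate.

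For openness and closedness in $\Phi H$, the set $O(H,\eps)$ is open in $\sub(H)$ (the condition $d(H,K) < \eps$ is Hausdorff-open) and lies in $\cF H$, so its image in $\Phi H = \cF H/H$ is open because the quotient by the continuous compact $H$-action is an open map. For closedness, the $f$-topology on $\sub(H)$ from \cite{ratmack} is designed so that the basic neighbourhoods $O(H,A,\eps)$ are simultaneously open and closed; the choice $A = H$ recovers $O(H,\eps)$ as an $f$-clopen set, and the clopen property descends to the quotient $\Phi H$.

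For the maximality claim, apply Lemma \ref{lem:gisigmaK} to write
\[
\Ig(\sigma_{H,U_H^\eps}) \;=\; \Lct((U_H^\eps)_G),
\]
the cotoral downward closure in $\sub(G)/G$ of the image of $O(H,\eps)$. In any cotoral downward closure $\Lct(X)$ the maximal elements coincide with the maximal elements of $X$ itself, so the task reduces to verifying that the image of $O(H,\eps)$ is a cotoral antichain in $\sub(G)/G$.

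Suppose $K, K' \in O(H,\eps)$ with $(K)_G \lneq (K')_G$ cotoral. Then for some $g \in G$ we have $K^g \lhd K'$ properly with $K'/K^g$ a nontrivial torus. Since $K^g \subseteq K' \subseteq H$, the torus $K'/K^g$ injects into $W_H(K^g)$, forcing $|W_H(K^g)| = \infty$ and hence $K^g \notin \cF H$. Using the finite decomposition $(K)_G \cap \sub(H) = \coprod_i (K_i)_H$ from Section~\ref{subsec:GspectraoverK}, $K^g$ must lie in one of the bad $H$-conjugacy classes outside $\cF H$. The key rigidity step is that for the implicit small $\eps$, the Montgomery-Zippin close-means-subconjugate theorem rules this out: for $g' \in N_G(K)$ we have $K = K^{g'}$ close to both $H$ and $H^{g'}$, so $d(H,H^{g'}) < 2\eps$, forcing $H^{g'}=H$ and hence $N_G(K) \subseteq N_G(H)$; combined with $|N_G(H)/H| < \infty$ (under the standing assumption $H \in \cF G$) and $|W_H(K)| < \infty$, this yields $K \in \cF G$, so no strict cotoral extension of $K$ in $G$ exists, contradicting the existence of $K'$.

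The main obstacle is this rigidity step: it must be shown that for the $\eps$ in the $f$-topology the image of $O(H,\eps)$ in $\sub(G)/G$ actually lies within $\Phi G$, so that strict cotoral extensions automatically vanish. The Montgomery-Zippin theorem is the essential tool, and the $f$-topology of \cite{ratmack} is engineered precisely so that the required local rigidity holds.
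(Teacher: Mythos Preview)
The paper offers no proof of this lemma (it ends with \qqed), so there is nothing to compare against directly; the paper evidently regards both assertions as immediate from the definitions and from \cite{ratmack}. Your treatment of the clopen claim is in that spirit and is fine.

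Your argument for the maximality claim, however, has a genuine gap in the ``rigidity step''. You argue that for $g'\in N_G(K)$ one has $d(H,H^{g'})<2\eps$ and then assert this forces $H^{g'}=H$, hence $N_G(K)\subseteq N_G(H)$. This inference is not valid: the $G$-conjugacy class of $H$ is the manifold $G/N_G(H)$, which is positive-dimensional whenever $\dim G>\dim N_G(H)$, so there are conjugates of $H$ arbitrarily close to $H$ yet distinct from it. Montgomery--Zippin only gives that $H^{g'}$ is conjugate to a subgroup of $H$ by an element $u$ close to the identity, yielding $g'u\in N_G(H)$; this places $N_G(K)$ in a neighbourhood of $N_G(H)$, not inside $N_G(H)$ itself. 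You also invoke ``the standing assumption $H\in\cF G$'', but no such assumption is made in the paper: the $f$-topology is defined using $O(H,\eps)$ for arbitrary closed $H$.

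More fundamentally, the antichain property you are trying to prove can fail for large $\eps$. For $G=SO(3)$ and $H=O(2)$, the set $\cF H$ contains both the reflection $D_1\cong C_2$ and $SO(2)$, and in $G$ the class $(C_2)_G$ is strictly cotoral in $(SO(2))_G$; so for $\eps$ large enough that $O(H,\eps)=\cF H$, the image in $\sub(G)/G$ is not an antichain. The lemma is therefore implicitly for $\eps$ small (as befits a neighbourhood basis), and a correct argument must use this smallness in an essential way---for instance, by showing that for sufficiently small $\eps$ every $K\in O(H,\eps)$ has the same identity component as $H$, so that a strict cotoral inclusion $K^g\lhd K'$ with $K'\in O(H,\eps)$ would force $\dim K<\dim K'=\dim H=\dim K$, a contradiction. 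Your sketch does not isolate this mechanism.
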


The following characterization of the topology may be helpful. 
\begin{lemma}
\label{lem:quottop}
The topology on $\sub(G)/G$ is the quotient topology for the maps 
$\Phi H\lra \sub(G)/G$ as $H$ runs through closed subgroups of $G$: 
a set is open in $\sub (G)/G$ if and only if its pullback to $\Phi 
H$ is open for all $H$. 
\end{lemma}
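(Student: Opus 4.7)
The plan is to verify the two inclusions between the $f$-topology and the quotient topology on $\sub(G)/G$. Write $\pi\colon \sub(G) \to \sub(G)/G$ for the orbit projection. By definition $V$ is $f$-open iff $\pi^{-1}(V)$ is $f$-open in $\sub(G)$, and because $\Phi H = \cF H/H$ carries the quotient of the Hausdorff metric topology, $V$ is quotient-open iff $\cF H \cap \pi^{-1}(V)$ is Hausdorff-open in $\cF H$ for every closed subgroup $H$.

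\emph{Easy direction (quotient-open implies $f$-open).} Let $(K)_G \in V$ and specialize the hypothesis to $H = K$. Since $W_K(K) = 1$ we have $K \in \cF K$, so the Hausdorff-open set $\cF K \cap \pi^{-1}(V)$ contains $K$, and hence an $\eps$-ball which is exactly $O(K, \eps) = \{L \subseteq K : |W_K(L)| < \infty,\ d(K, L) < \eps\}$ for some $\eps > 0$. Because $\pi^{-1}(V)$ is $G$-invariant, the basic $f$-neighbourhood $O(K, A, \eps) = \bigcup_{a \in A} O(K, \eps)^a$ lies in $\pi^{-1}(V)$ for every neighbourhood $A$ of $e$ in $G$. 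This makes $\pi^{-1}(V)$ $f$-open at $K$; $G$-invariance propagates to all points, so $V$ is $f$-open.

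\emph{Harder direction ($f$-open implies quotient-open).} Fix $H$ and $K \in \cF H$ with $(K)_G \in V$, and pick $O(K, A, \eps_0) \subseteq \pi^{-1}(V)$. I would produce $\delta > 0$ so that $\cF H \cap B_\delta(K) \subseteq \pi^{-1}(V)$. For $L \in \cF H$ with $d(K, L) < \delta$, Montgomery--Zippin applied \emph{inside $H$} (shrinking $\delta$ to match the neighbourhood $A \cap H$ of $e \in H$) yields $h \in A \cap H$ with $L^h \subseteq K$, and a triangle-inequality estimate $d(K, L^h) \le d(K, L) + d(L, L^h)$ gives $d(K, L^h) < \eps_0$ once $\delta$ and $A$ are small enough. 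Because $h \in H$, normalizers transport covariantly: $N_H(L^h) = N_H(L)^h$, so $|N_H(L^h)/L^h| = |W_H(L)| < \infty$, and the inclusion $N_K(L^h) \subseteq N_H(L^h)$ forces $|W_K(L^h)| < \infty$. Thus $L^h \in O(K, \eps_0)$ and $L \in O(K, A, \eps_0) \subseteq \pi^{-1}(V)$, as required.

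\emph{Main obstacle.} The delicate point is preserving the finite-index condition on normalizers through the Montgomery--Zippin conjugation. Subgroups close to $K$ in the Hausdorff metric need not have finite normalizer quotient — for instance dense finite cyclic subgroups of a torus — so one must exploit membership in $\cF H$ and perform the conjugation inside $H$ rather than all of $G$. That is what makes the identity $N_H(L^h) = N_H(L)^h$ available and lets the containment $N_K(L^h) \subseteq N_H(L^h)$ ferry the $\cF H$-condition on $L$ across to the $O(K, \eps_0)$-condition on $L^h$; everything else is routine manipulation of Hausdorff neighbourhoods.
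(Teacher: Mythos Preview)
Your argument is correct. Both you and the paper handle the implication ``quotient-open $\Rightarrow$ $f$-open'' in the same way: specialise to $H=K$, use $K\in\cF K$, recognise the Hausdorff $\eps$-ball in $\cF K$ as $O(K,\eps)$, and invoke $G$-invariance of $\pi^{-1}(V)$ to get the basic $f$-neighbourhood $O(K,A,\eps)$ inside $\pi^{-1}(V)$.

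The divergence is in the converse. The paper does not argue it directly: it simply cites \cite[Lemma~8.6(b)]{ratmack} for the continuity of the inclusion $\cF H\hookrightarrow(\sub(G),f)$, from which continuity of $\Phi H\to\sub(G)/G$ follows formally. You instead give a self-contained proof of that continuity. Your use of Montgomery--Zippin \emph{inside $H$} (rather than in $G$) is exactly the right move, since it makes the identity $N_H(L^h)=N_H(L)^h$ available and lets the finite-Weyl-group hypothesis on $L\in\cF H$ descend to $L^h\in\cF K$ via $N_K(L^h)\subseteq N_H(L^h)$; this is precisely the point that a naive argument in $G$ would miss. A couple of small details worth tightening: you want $h^{-1}\in A$ rather than $h\in A$ to place $L$ in $O(K,A,\eps_0)$, so take $A$ symmetric; and the bound on $d(L,L^h)$ uses uniform continuity of the conjugation action $G\times\sub(G)\to\sub(G)$, which holds since $\sub(G)$ is compact. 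With those noted, your route has the advantage of making the geometric content explicit where the paper defers it to the earlier reference.
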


\begin{proof}
Lemma 8.6 (b) of \cite{ratmack} shows that the maps $\Phi H \lra \sub 
(G)$ are continuous. It remains to show that if $U$ is a subset of 
$\sub(G)/G$ which has the property $U\cap \Phi H  $ is open for all 
$H$ then $U$ is open. If $(K)_G \in U$ then the fact that $U\cap \Phi 
K$ is open shows it contains a neighbourhood $U_K^{\eps}$ of  $(K)_H$
for some $\eps >0$. The image of this in $\sub(G)/G$ is open in the 
$f$-topology. 
\end{proof}

The collection  of all closed subgroups of $G$ is a  poset under
cotoral inclusion. 

\begin{prop}\cite[8.7, 8.8]{ratmack}
Giving $\sub (G)$ the $f$-topology and topologizing the space 
$\sub_1(G)$ of cotoral inclusions as a subspace of $\sub(G)\times \sub (G)$, we obtain 
a topological category. The source and target maps are open maps. \qqed
\end{prop}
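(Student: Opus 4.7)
The plan is to verify the topological category axioms one by one, relying on the two cited statements \cite[8.7, 8.8]{ratmack} for the core content and supplying the formal packaging here.

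First, I would unpack what the claim requires. A topological category on objects $\sub(G)$ and morphisms $\sub_1(G)$ needs: continuity of the source and target maps $s, t : \sub_1(G) \to \sub(G)$, continuity of the identity inclusion $\sub(G) \to \sub_1(G)$ sending $H$ to the cotoral inclusion $H \leq H$, and continuity of composition over the fibre product. Since $\sub_1(G)$ carries the subspace topology from $\sub(G) \times \sub(G)$, the maps $s$ and $t$ are restrictions of the two projections and are immediately continuous, and the identity section $H \mapsto (H,H)$ is continuous because the diagonal of $\sub(G)$ is continuous into the product. For composition, the only nontrivial point is that it is well-defined, i.e.\ that cotoral inclusion is transitive: if $K \trianglelefteq L$ with $L/K$ a torus and $L \trianglelefteq H$ with $H/L$ a torus, then $K$ is normal in $H$ (since the $H$-conjugation action on $L$ preserves its maximal torus) and $H/K$ is an extension of tori, hence a torus. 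Given well-definedness, composition is continuous as the restriction of the projection $(K,L,L,H) \mapsto (K,H)$ from the fibre product to $\sub(G)\times\sub(G)$.

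The substantive content — and the step I would flag as the main obstacle — is openness of $s$ and $t$. For this I would appeal to \cite[8.7, 8.8]{ratmack}, where it is shown that near any cotoral inclusion $K \leq H$ one has controlled deformations: given a small $f$-neighbourhood $O(H,A,\eps)$ of $H$, every nearby $H'$ still contains a conjugate of $K$ cotorally, and conversely small perturbations of $K$ inside $H$ remain cotoral. Concretely, one uses that the quotient $H/K$ is a torus together with the Montgomery--Zippin fact that close subgroups are subconjugate, to produce, for any neighbourhood $\cN$ of $(K,H)$ in $\sub_1(G)$, matching neighbourhoods of $s(K,H)=K$ and $t(K,H)=H$ whose preimages under $s$ and $t$ respectively sit inside $\cN$. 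This is exactly the local lifting property that delivers openness.

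Finally, I would observe that the characterization of the $f$-topology via Lemma~\ref{lem:quottop} is convenient here: to check openness of $s$ and $t$ it suffices to check that the pullback of each image set under every map $\Phi H \to \sub(G)/G$ is open, and this reduces, via the basic cells $\sigma_{H,U_H^{\eps}}$, to the already-established openness of the neighbourhoods $O(H,\eps)$ inside $\Phi H$. With these pieces in place, the category axioms and the openness of source and target follow; the only genuinely new ingredient over \cite[8.7, 8.8]{ratmack} is the packaging, and the only delicate input is the Montgomery--Zippin-based local stability of cotoral inclusions.
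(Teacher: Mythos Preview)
The paper gives no proof of this proposition: it is stated with the \qqed marker and attributed wholesale to \cite[8.7, 8.8]{ratmack}. Your proposal, after the formal packaging, likewise defers the only substantive step---openness of $s$ and $t$---to that same citation, so there is nothing to compare: both the paper and your sketch treat the result as an import from \cite{ratmack}.

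One small remark on your packaging: the transitivity argument you give for cotoral inclusion (``the $H$-conjugation action on $L$ preserves its maximal torus'') does not obviously yield normality of $K$ in $H$, since $K$ need not be related to the maximal torus of $L$. The cleaner route is that $H/L$ is a torus, hence abelian, so $H$ acts on $L$ through $H/L$ factoring through inner automorphisms up to the centre issue---but in fact one typically argues via the identity component, or simply notes that the connected group $H/K$ (if it exists) sitting in an extension of a torus by a torus is itself a torus. The actual verification that $K \normal H$ requires a short argument (e.g.\ $H$-conjugates of $K$ are cotoral in $L$ with the same quotient dimension, and the identity component of $L/K$ is preserved). This is standard but not quite what you wrote.
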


Note that for any set $A$ of subgroups,  $\Lct (A)=s(t^{-1}(A))$, so
that if $A$ is open so is $\Lct (A)$.

\subsection{Basic cells and primes}

The geometric isotropy of the basic cells determines the cotoral order.

\begin{cor}
\label{lem:primeordercotoralorder}
If $\wp_L\subseteq \wp_K$ then $L$ is conjugate to a subgroup cotoral
in $K$. 
\end{cor}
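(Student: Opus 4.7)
The plan is to prove the contrapositive using the basic cells $\sigma_{K, U}$ of Section \ref{sec:basiccells} as detecting objects. For any clopen neighborhood $U$ of $K$ in $\Phi K$, Lemma \ref{lem:gisigmaK} gives $K \in \Ig(\sigma_{K, U}) = \Lct(U_G)$, so $\sigma_{K, U} \notin \wp_K$; the hypothesis $\wp_L \subseteq \wp_K$ therefore forces $\sigma_{K, U} \notin \wp_L$ as well, i.e.\ $L \in \Lct(U_G)$. Since $\Phi K$ is compact Hausdorff and totally disconnected, I would fix a shrinking sequence $U_1 \supseteq U_2 \supseteq \cdots$ of clopen neighborhoods of $K$ in $\Phi K$ with the property that every subgroup $H \subseteq K$ representing an element of $U_n$ satisfies $d(H, K) < 1/n$. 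This produces, for each $n$, a subgroup $H_n \subseteq K$ with $d(H_n, K) \to 0$ and an element $g_n \in G$ such that $L^{g_n} \triangleleft H_n$ with quotient $H_n / L^{g_n}$ a torus.

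Next I would extract a limit. By compactness of $G$, pass to a subsequence so that $g_n \to g \in G$; continuity of conjugation then gives $L^{g_n} \to L^g$ in the Hausdorff metric on closed subsets of $G$. I claim $L^g$ is cotoral in $K$. The inclusion $L^g \subseteq K$ is immediate from $L^{g_n} \subseteq H_n \to K$. Normality $L^g \triangleleft K$ and commutativity of $K / L^g$ follow from Hausdorff-limits of the corresponding relations for $L^{g_n} \triangleleft H_n$ and $[H_n, H_n] \subseteq L^{g_n}$: given $k, k_1, k_2 \in K$ and $l \in L^g$, lift them to convergent sequences in $H_n$ and $L^{g_n}$ respectively, apply the relations at each level, and take the limit in the closed subset $L^g$.

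The main obstacle is verifying connectedness of $K/L^g$, which together with commutativity and compactness then yields a torus structure. For this I would exploit that each $H_n / L^{g_n}$, being a connected torus, forces $H_n = H_n^0 L^{g_n}$, while $H_n \subseteq K$ gives $H_n^0 \subseteq K^0$. Given $k \in K$, select $k_n \in H_n$ with $k_n \to k$ and decompose $k_n = h_n l_n$ with $h_n \in H_n^0 \subseteq K^0$ and $l_n \in L^{g_n}$; by compactness of $K^0$ pass to a further subsequence with $h_n \to h \in K^0$, whence $l_n = h_n^{-1} k_n \to h^{-1} k$, which must lie in $L^g$ by the Hausdorff convergence $L^{g_n} \to L^g$. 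Hence $k = h(h^{-1} k) \in K^0 L^g$, so $K = K^0 L^g$ and $K / L^g$ is a continuous image of the connected group $K^0$. The quotient is therefore a compact connected abelian Lie group, i.e.\ a torus, so $L^g$ is cotoral in $K$ and $L$ is $G$-conjugate to a subgroup cotoral in $K$, as required.
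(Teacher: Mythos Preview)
Your argument is correct but takes a genuinely different route from the paper's. The paper proves the contrapositive in one stroke: assuming no $G$-conjugate of $L$ is cotoral in $K$, it invokes the topological poset structure on cotoral inclusions (the source and target maps $s,t$ on $\sub_1(G)$, quoted from \cite{ratmack}) to argue that the set of $H\in\Phi K$ admitting a $G$-conjugate of $L$ as a cotoral subgroup is closed and omits $K$; a single clopen $U_K$ avoiding that closed set then exhibits $\sigma_{K,U_K}\in\wp_L\setminus\wp_K$. You instead argue directly from the hypothesis $\wp_L\subseteq\wp_K$: applying it to $\sigma_{K,U}$ for a shrinking sequence of clopen neighbourhoods produces witnesses $(H_n,g_n)$ with $H_n\to K$ and $L^{g_n}$ cotoral in $H_n$, after which you pass to a limit $L^g$ and verify by hand (normality, commutativity, and connectedness of the quotient via $H_n=H_n^0\,L^{g_n}$) that $L^g$ is cotoral in $K$. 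Your approach is more self-contained, trading the results imported from \cite{ratmack} for explicit compactness and Hausdorff-convergence arguments inside $G$; the paper's approach is shorter and isolates a slightly stronger intermediate fact (closedness of the obstruction set in $\Phi K$), which is in keeping with the topological analysis of Section~\ref{sec:top}.
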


\begin{proof}
If $L$ is not cotoral we will show that there is a clopen
neighbourhood $U_K$ of $K$ in $\Phi K$ so that $\sigma_{K, U_K}\in
\wp_L\setminus \wp_K$. 

First note that   $s^{-1}(L)$ is closed. Its intersection with $\{ L\} \times \Phi K$ is compact, and so
$t(s^{-1}(L) \cap \{L \} \times \Phi K)$ is closed. Since it  does
not contain $K$ there is  a neighbourhood $U_K$
of $K$ in $\Phi K$ so that $(L)_G\cap \Lct (U_G)=\emptyset$ and hence
  $\sigma_{K,U}\in \wp_L$, and since $K \in U_K$ we see
  $\sigma_{K,U_K}\not \in \wp_K$. 
\end{proof}

These basic cells show that cotorally unrelated subsets can be 
separated. We will not use the following result elsewhere (which may 
excuse the forward reference in the proof). 

\begin{cor}
\label{cor:tzeroc}
If $K_1$ and $K_2$ are two subgroups cotorally unrelated (in the sense 
that $\Lct ((K_1)_G)\cap \Lct ((K_2)_G)=\emptyset$), then there 
are finite complexes $X_1$ and $X_2$ with $K_i \in \Ig(X_i)$ and $\Ig 
(X_1)\cap \Ig(X_2)=\emptyset$
\end{cor}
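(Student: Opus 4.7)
The plan is to take $X_i = \sigma_{K_i, U_i}$ for suitably chosen clopen neighbourhoods $U_i$ of $K_i$ in $\Phi K_i$. By Lemma~\ref{lem:gisigmaK}, $\Ig(X_i) = \Lct((U_i)_G)$, so $K_i \in U_i$ gives $K_i \in \Ig(X_i)$ automatically. The task reduces to choosing the $U_i$ small enough that $\Lct((U_1)_G) \cap \Lct((U_2)_G) = \emptyset$; the hypothesis supplies this in the pointwise case $U_i = \{K_i\}$, and it must be thickened to clopen neighbourhoods.

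I would proceed in two stages mirroring the proof of Corollary~\ref{lem:primeordercotoralorder}, combined with a compactness argument. First stage: for each $L \in \Lct((K_1)_G)$, the hypothesis forces $L \notin \Lct((K_2)_G)$, so the method of Corollary~\ref{lem:primeordercotoralorder} (with $K = K_2$ and that particular $L$) produces a clopen neighbourhood $U_2^L$ of $K_2$ in $\Phi K_2$ with $L \notin \Lct((U_2^L)_G)$, and openness of the source and target maps on the topological category $\sub_1(G)$ (Proposition~8.7 of~\cite{ratmack}) yields an open neighbourhood $V_L$ of $L$ in $\sub(G)$ disjoint from $\Lct((U_2^L)_G)$. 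The set $\Lct((K_1)_G) = s(t^{-1}((K_1)_G))$ is compact, since $(K_1)_G$ is a compact image of $G$ and $s, t$ are continuous on the compact space $\sub_1(G) \subseteq \sub(G)^2$. Extracting a finite subcover $V_{L_1}, \dots, V_{L_n}$ of $\Lct((K_1)_G)$ and setting $U_2 := \bigcap_j U_2^{L_j}$ gives a clopen neighbourhood of $K_2$ with $\Lct((K_1)_G) \cap \Lct((U_2)_G) = \emptyset$.

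Second stage: with $U_2$ fixed, I repeat symmetrically. For each $L' \in \Lct((U_2)_G)$ we now have $L' \notin \Lct((K_1)_G)$ by the first stage, so the same technique with $K = K_1$ produces a clopen $U_1^{L'}$ of $K_1$ with $L' \notin \Lct((U_1^{L'})_G)$, upgraded to an open $V_{L'} \ni L'$ disjoint from $\Lct((U_1^{L'})_G)$. Compactness of $\Lct((U_2)_G)$ (by the same image argument, with $U_2$ in place of $(K_1)_G$) produces a finite subcover, and $U_1 := \bigcap_i U_1^{L'_i}$ is then a clopen neighbourhood of $K_1$ with $\Lct((U_1)_G) \cap \Lct((U_2)_G) = \emptyset$. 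Setting $X_i = \sigma_{K_i, U_i}$ completes the construction.

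The main obstacle is the topological upgrade from the pointwise separation of Corollary~\ref{lem:primeordercotoralorder} ($L \notin \Lct((U_2^L)_G)$) to a genuine open-neighbourhood separation ($V_L \cap \Lct((U_2^L)_G) = \emptyset$) that the compactness argument requires. This depends on the finer topological-category structure of $\sub_1(G)$ from~\cite{ratmack} — the openness of $s, t$ together with the closedness of cotoral incidence inside $\sub(G)^2$ — rather than only the pointwise argument that sufficed for the Balmer-spectrum identification in Corollary~\ref{lem:primeordercotoralorder}; this is the point at which the proof reaches beyond the tools used for Theorem~\ref{thm:Balmerposet}.
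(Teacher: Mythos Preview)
Your approach is correct and genuinely different from the paper's. Both proofs take $X_i=\sigma_{K_i,U_i}$ and reduce to finding cotorally unrelated $U_i$, but the paper obtains this by a forward reference to Theorem~\ref{thm:ctmaxcomp}: it considers $V=\Lct(K_1,U_1)\cap\Lct(K_2,U_2)$ for some initial choice, invokes the later result to see that $A=\ctmax(V)$ is compact, and then shrinks each $U_i$ to avoid $t(s^{-1}A\cap\Phi K_i)$. Your two-stage compactness argument in the Hausdorff-metric topology is more elementary and self-contained: it uses only the pointwise separation of Corollary~\ref{lem:primeordercotoralorder} together with compactness of $\Lct((K_1)_G)$ and $\Lct((U_2)_G)$, so it does not depend on the finiteness theorem proved later. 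What the paper's route buys is brevity once Theorem~\ref{thm:ctmaxcomp} is in hand; what yours buys is logical independence from that result.

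One point to tighten: the step producing the open neighbourhood $V_L$ of $L$ disjoint from $\Lct((U_2^L)_G)$ does not actually follow from the \emph{openness} of $s$ and $t$ in the $f$-topology (that gives $\Lct$ of an open set is open, which is the wrong direction). What you need is that $\Lct((U_2^L)_G)$ is \emph{closed} in the Hausdorff-metric topology, and this follows because $(U_2^L)_G$ is compact, $\sub_1(G)$ is closed in the compact space $\sub(G)\times\sub(G)$, so $t^{-1}((U_2^L)_G)$ is compact and its image under $s$ is compact, hence closed. The same remark applies to your compactness claims for $\Lct((K_1)_G)$ and $\Lct((U_2)_G)$: these are $h$-topology facts, not consequences of the $f$-topological category structure you cite. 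With that adjustment the argument goes through.
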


\begin{proof}
We will take $X_1=\sigma_{K_1, U_1}$ and $X=\sigma_{K_2, U_2}$. It 
remains to show  we can choose  neighbourhoods $U_1, U_2$ of  $K_1,
K_2$  that are also unrelated. 

Consider the set $V=\Ig (\sigma_{K_1, U_1}\sm \sigma_{K_2, U_2}))=\Lct
(K_1, U_1)\cap \Lct (K_2, U_2)$. We will show in Theorem
\ref{thm:ctmaxcomp}  below that $A=\ctmax{V}$ is
 open and compact. As above
$s^{-1}A$ intersects $\Phi K_i$ in a compact set, and so
$t(s^{-1}A\cap \Phi K_i)$ is closed and does not contain
$K_i$. Accordingly we can choose neighbourhoods $U_i$ of $K_i$ not
meeting it. 
\end{proof}

\subsection{Basic detection}
Basic cells play a comparable role to classical cells in that they
generate the category and detect equivalences. The smaller isotropy
means that we can make stronger statements. Note that the lemma 
holds however small the  neighbourhoods $U_K$ are. 

\begin{lemma}
\label{lem:basicbuildsclass}
If we choose an open and closed neighbourhood $U_K$ of $K$ in $\Phi K$
for each conjugacy class of closed subgroups of $G$ then 
the natural cell $G/K_+$ is built from basic cells $\sigma_{L, U_L}$ with 
$L\subseteq K$. 

Accordingly, the category of rational $G$-spectra is generated by the basic cells $\sigma_{K,U_K}$.
\end{lemma}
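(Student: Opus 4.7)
The plan is to induct on the conjugacy class $(K)_G$ in the subconjugacy poset on $\sub(G)/G$; this poset satisfies the descending chain condition for a compact Lie $G$ since $(\dim K,|\pi_0 K|)$ decreases lexicographically along any strictly descending chain. The base case $K=1$ is immediate: $\Phi 1=\{1\}$ forces $U_1=\{1\}$, so $\sigma_{1,U_1}=G_+\wedge_1 S^0=G_+=G/1_+$.

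For the inductive step, assume the statement for every $L\subsetneq_G K$. The tom Dieck isomorphism $A(K)\cong C(\Phi K,\Q)$ produces the idempotent $e^K_{U_K}$ with support $U_K\ni K$. Splitting
\[
S^0_K\simeq e^K_{U_K}S^0\vee(1-e^K_{U_K})S^0
\]
and applying $G_+\wedge_K(-)$ gives $G/K_+\simeq\sigma_{K,U_K}\vee T$ with $T:=G_+\wedge_K Y$ and $Y:=(1-e^K_{U_K})S^0_K$.

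The main sub-claim is that $T$ lies in the thick tensor ideal generated by $\{G/L_+:L\subsetneq K\}$. Since the mark of $1-e^K_{U_K}$ at $(K)\in\Phi K$ is $0$, we have $\Phi^K Y\simeq_1 0$. Smashing $Y$ with the cofibre sequence $E\cP_+\to S^0_K\to\widetilde{E}\cP$ for $\cP$ the family of proper subgroups of $K$ makes every geometric fixed point of $Y\wedge\widetilde{E}\cP$ vanish; by the Geometric Fixed Point Whitehead Theorem $Y\wedge\widetilde{E}\cP\simeq *$, so $Y\simeq Y\wedge E\cP_+$. This places $Y$ in the localizing tensor ideal generated by $\{K/L_+:L\in\cP\}$; since $Y$ is a retract of the compact $K$-spectrum $S^0$, Thomason's Localization Theorem upgrades this to the corresponding thick tensor ideal, and inducing up transfers the conclusion to $T$.

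Invoking the inductive hypothesis on each $G/L_+$ with $L\subsetneq K$ then shows $T$, and hence $G/K_+\simeq\sigma_{K,U_K}\vee T$, is built from basic cells $\sigma_{L,U_L}$ with $L\subseteq K$. The second sentence of the lemma is immediate, since the classical cells $G/K_+$ already generate $\Gspec$ as a localizing tensor ideal. The principal obstacle is the substep upgrading $Y\simeq Y\wedge E\cP_+$—which \emph{a priori} only places $Y$ in a localizing ideal—to a genuine thick-ideal statement via Thomason's Localization Theorem, together with the verification that the mark of $1-e^K_{U_K}$ at $(K)$ vanishes so the Geometric Fixed Point Whitehead argument applies.
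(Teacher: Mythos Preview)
Your proposal is correct and follows the same inductive skeleton as the paper: split $G/K_+\simeq\sigma_{K,U_K}\vee T$ via the Burnside idempotent, observe $K\notin\Ig(T)$, conclude $T$ is built from $G/L_+$ with $L\subsetneq K$, and apply the inductive hypothesis. The paper's proof simply asserts the third step (``since $K\notin\Ig(\sigma_{K,U_K'})$, we find $\sigma_{K,U_K'}$ is built from cells $G/L_+$ with $L$ proper''), treating it as a standard consequence of $Y\simeq Y\sm E\cP_+$ together with compactness, whereas you spell it out via Thomason's Localization Theorem. That is a legitimate (if slightly heavier, and forward-referencing in the paper's layout) way to make the same point.

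One small wrinkle worth tightening: your phrase ``inducing up transfers the conclusion to $T$'' is not quite automatic. The functor $G_+\sm_K(-)$ is exact and so preserves thick subcategories, but it does not commute with smash products, so it need not preserve thick \emph{tensor} ideals on the nose. The clean fix is to note that, because $\cP$ is a family, the double-coset decomposition of $K/M_+\sm K/L_+$ shows $\thick^K(K/L_+:L\in\cP)$ is already a tensor ideal, so $\thickt^K=\thick^K$ here; then induction carries $\thick^K$ to $\thick^G(G/L_+:L\subsetneq K)$ directly. Alternatively, one can run the $E\cF_+$ and Thomason argument entirely in $G$-spectra for the family of proper subconjugates of $K$, avoiding the transfer step. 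Either way the gap is minor and easily closed.
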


\begin{proof}
It suffices to show that the classical cells $G/K_+$ are built from
the basic cells. The proof is by induction on $K$ (i.e., we work with the
poset of all subgroups ordered by inclusion, and note that there are no
infinite decreasing chains). 

Since $G/1_+=\sigma_1$, the induction begins. Now suppose $K$ is
non-trivial  and that 
$G/L_+$ is built from basic cells for proper subgroups $L$ of $K$. Now
$G/K_+$ is a sum of $\sigma_{K,U_K}$ and the spectra $G_+\sm_K
e_{K,U_K'}S^0$ where $U_K'$ is the complement of $U_K$. 
Since $K\not \in \Ig (\sigma_{K, U_K'})$, we find $\sigma_{K,U_K'}$ is
built from cells $G/L_+$ where $L$ is a {\em proper} subgroup of $K$.
By induction it is thus built from a $\sigma_{L,U_L}$. 
\end{proof}

There is a useful criterion for vanishing of homotopy in terms of
geometric isotropy. 

\begin{lemma}
\label{lem:disjtsupp}
(i) If $\Lcl (K) \cap \Ig (X) =\emptyset$ then $[G/K_+,X]^G_*=0$. 

(ii) If $\Lct (K,U)\cap \Ig (X)=\emptyset$ then $[\sigma_{K,U},X]^G_*=0$
\end{lemma}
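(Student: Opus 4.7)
The plan is to reduce both parts to the Geometric Fixed Point Whitehead Theorem via Spanier--Whitehead duality. Since $G/K_+$ is compact and $\sigma_{K,U}$ (being a retract of $G/K_+$) is also compact, for $A$ either of these spectra we have a natural equivalence $F(A,X)\simeq DA\sm X$, giving
$$[A,X]^G_*=\pi_*^G(DA\sm X).$$
It therefore suffices to show $DA\sm X\simeq 0$, which by the Geometric Fixed Point Whitehead Theorem reduces to verifying $\Ig(DA\sm X)=\emptyset$.

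Because $\Phi^H$ is symmetric monoidal and preserves compact objects, it commutes with smash product and with Spanier--Whitehead duality, so $\Phi^H(DA\sm X)\simeq_1 D(\Phi^HA)\sm \Phi^HX$. A finite non-equivariant spectrum $Z$ is contractible if and only if $DZ$ is, so $\Ig(DA)=\Ig(A)$, and hence $\Ig(DA\sm X)\subseteq \Ig(A)\cap \Ig(X)$.

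For (i), we have $\Ig(G/K_+)=\Lcl((K)_G)$, and since $\Ig(X)$ is closed under conjugacy, the hypothesis $\Lcl(K)\cap \Ig(X)=\emptyset$ is equivalent to $\Ig(G/K_+)\cap \Ig(X)=\emptyset$. For (ii), Lemma \ref{lem:gisigmaK} identifies $\Ig(\sigma_{K,U})=\Lct(U_G)$, which in the notation of Corollary \ref{cor:tzeroc} is precisely $\Lct(K,U)$, so the hypothesis yields $\Ig(\sigma_{K,U})\cap \Ig(X)=\emptyset$ directly. In both cases this forces $\Ig(DA\sm X)=\emptyset$ and the conclusion follows.

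The only delicate point is the compatibility of $\Phi^H$ with Spanier--Whitehead duality on compact objects; this is a standard consequence of $\Phi^H$ being a symmetric monoidal functor between closed symmetric monoidal stable categories in which the compact objects are dualizable, and is already implicit in the proof of Lemma \ref{lem:basicbuildsclass}, where the duals $DG/L_+$ of the natural cells are freely used.
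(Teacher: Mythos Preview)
Your argument is correct and gives a clean, uniform treatment of both parts: for any compact $A$ you reduce $[A,X]^G_*=\pi^G_*(DA\sm X)$ to the vanishing of $\Ig(DA\sm X)\subseteq \Ig(A)\cap\Ig(X)$, and then invoke the Geometric Fixed Point Whitehead Theorem. The identification $\Ig(DA)=\Ig(A)$ via the monoidality of $\Phi^H$ on dualizables is the only extra input, and your justification of it is sound.

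The paper proceeds differently. For (i) it uses the change-of-group adjunction $[G/K_+,X]^G_*\cong[S^0,\res^G_KX]^K_*$ and observes that the hypothesis forces $\Ig(\res^G_KX)=\emptyset$, so $\res^G_KX\simeq_K0$. For (ii) it unwinds $[\sigma_{K,U},X]^G_*=[e_{K,U}S^0,e_{K,U}X]^K_*$ and shows that the idempotent piece $e_{K,U}X$ has empty $K$-geometric isotropy and is therefore $K$-contractible. Thus the paper stays on the ``target'' side, using restriction and idempotent splitting rather than duality; this avoids appealing to the compatibility of $\Phi^H$ with Spanier--Whitehead duals. Your approach, by contrast, works verbatim for any compact $A$ with $\Ig(A)\cap\Ig(X)=\emptyset$, which is a pleasant bonus. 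One small correction: the place in the paper where duals $DG/L_+$ actually appear is the proof of Corollary~\ref{cor:min}, not Lemma~\ref{lem:basicbuildsclass}.
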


\begin{proof}
The first statement is immediate from the Geometric Fixed Point
Whitehead Theorem, since $\Ig (\res^G_KX)=\emptyset$.

For the second, we note 
$$[\sigma_{K,U} ,X]^G_*=[e_{K,U}S^0, X]^K_*=[e_{K,U}S^0, e_{K,U}X]^K_*. $$
By hypothesis $\Ig (e_{K,U}X)=\Lct (K, U)\cap \Ig (\res^G_K X)=\emptyset$,
so that $e_{K,U}X\simeq_K 0$ by the Geometric Fixed Point Whitehead Theorem. 
\end{proof}

A slightly refined version  of  the Whitehead Theorem holds in the
rational context: to establish an equivalence we need only check on
basic $K$-homotopy groups for $K$ in the cotoral closure of the
geometric isotropy rather than for the full classical closure. 

\begin{prop}
\label{prop:basicWHT}
Suppose that $\Ig (X), \Ig (Y)\subseteq \cK$ for a set $\cK$ of
subgroups,  and that for some open neighbourhoods $U_K$,  the map 
$f:X\lra Y$ induces an isomorphism of $[\sigma_{K, U_K}, \cdot ]^G_*$ for all
$K\in \cK$. Then $f$ is an equivalence.  
\end{prop}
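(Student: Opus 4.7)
The plan is to show the cofibre $C=\cofibre(f)$ is contractible, from which $f$ is an equivalence. The cofibre sequence $X\to Y\to C$ gives $\Ig(C)\subseteq \Ig(X)\cup \Ig(Y)\subseteq \cK$, so by the Geometric Fixed Point Whitehead Theorem it suffices to verify $\Phi^K C\simeq_1 0$ for every closed subgroup $K$ of $G$. The case $K\notin \cK$ is immediate from $\Ig(C)\subseteq \cK$, so I focus on $K\in \cK$ and proceed by well-founded induction on $K$ along the classical inclusion order of closed subgroups; this order has no infinite descending chains, since the dimension and component count of a closed subgroup of a compact Lie group can decrease only through finitely many values.

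Fix $K\in \cK$ and suppose inductively that $\Phi^L C\simeq_1 0$ for every $L\subsetneq K$ up to $G$-conjugacy. The hypothesis gives $[\sigma_{K,U_K},C]^G_*=0$, and by the defining adjunction $\sigma_{K,U_K}=G_+\sm_K e^K_{U_K}S^0$ this rewrites as $[e^K_{U_K}S^0,C]^K_*=0$, with $C$ viewed as a $K$-spectrum by restriction. The heart of the proof is to identify this group with $\pi_*\Phi^K C$. Smash the isotropy-separation cofibre sequence of $K$-spectra $E\{\subset_K K\}_+\to S^0\to \elr{K}$ with $C$: by the inductive hypothesis $\Phi^L(E\{\subset_K K\}_+\sm C)\simeq_1 0$ for $L\subsetneq K$, and $\Phi^K(E\{\subset_K K\}_+\sm C)=0$ because $\Phi^K E\{\subset_K K\}_+=\ast$; hence $E\{\subset_K K\}_+\sm C\simeq 0$ and $C\to \elr{K}\sm C$ is an equivalence of $K$-spectra. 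Smashing the same cofibre sequence with $e^K_{U_K}S^0$, the spectrum $e^K_{U_K}S^0\sm E\{\subset_K K\}_+$ has trivial $K$-geometric fixed points and is therefore built from $K$-cells $K/L_+$ with $L\subsetneq K$; for each such $L$, $[K/L_+,\elr{K}\sm C]^K_*=\pi^L_*(\elr{K}\sm C)=0$, because $\elr{K}\sm C$ has geometric isotropy $\{K\}$ and so its restriction to $L$ is contractible. Chaining the resulting isomorphisms gives
$$[e^K_{U_K}S^0,C]^K_*\ \cong\ [e^K_{U_K}S^0,\elr{K}\sm C]^K_*\ \cong\ [e^K_{U_K}S^0\sm \elr{K},\,\elr{K}\sm C]^K_*.$$

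Both $e^K_{U_K}S^0\sm \elr{K}$ and $\elr{K}\sm C$ now have geometric isotropy precisely $\{K\}$ as $K$-spectra (for the source, $\Phi^K(e^K_{U_K}S^0\sm \elr{K})=S^0$ since $K\in U_K$), so Lemma \ref{lem:resfix} applied with ambient group $K$ and $W_K(K)=1$ trivial yields $[e^K_{U_K}S^0\sm \elr{K},\elr{K}\sm C]^K_*\cong [S^0,\Phi^K C]_*=\pi_*\Phi^K C$. The hypothesis forces this to vanish, so $\Phi^K C\simeq_1 0$, closing the induction. The main obstacle is the dual isotropy-separation reduction that replaces $e^K_{U_K}S^0$ by $e^K_{U_K}S^0\sm \elr{K}$ on the source side; once it is recast as a vanishing of maps from a $K$-spectrum with no $K$-geometric fixed points into one of isotropy $\{K\}$, it is a standard cell-wise computation.
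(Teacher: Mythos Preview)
Your proof is correct and follows essentially the same route as the paper: reduce to showing the cofibre $Z$ is contractible, pick a minimal $K\in\Ig(Z)$ (your well-founded induction is the same device), pass via the adjunction to $[e_{U_K}S^0,Z]^K_*=0$, and use the isotropy-separation sequence $E\cP_+\to S^0\to \etp$ together with minimality to identify this with $\pi_*\Phi^K Z$. The only cosmetic difference is that the paper invokes the ``over $K$'' equivalence of Subsection~\ref{subsec:GspectraoverK} directly to compute $[e_{U_K}S^0,\etp\sm Z]^K=[S^0,\Phi^K Z]$, whereas you first reduce the source to $e_{U_K}S^0\sm\elr{K}$ so that Lemma~\ref{lem:resfix} applies; this extra source-side step is unnecessary but harmless.
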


\begin{proof}
Taking $Z$ to be the mapping cone of $f$, it suffices to show that if 
$\Ig (Z)\subseteq \cK$ and  $[\sigma_{K, U_K} , Z]^G_*=0$ for all $K\in \cK $ then $Z\simeq 0$.
We will show that in fact $\Ig (Z)=\emptyset$.  If not, there is a
minimal counterexample, $K\in \Ig(Z)$.  We then have 
$$0=[\sigma_{K,U_K}, Z]^G=[e_{U_K}S^0, Z]^K. $$
There is a $K$-equivariant cofibre sequence
$$E\cP_+\sm Z\lra Z \lra \etp \sm Z, $$
and it suffices to argue $[e_{U_K}S^0, E\cP_+ \sm Z]^K=0$ since then 
we have
$$0= [e_{U_K}S^0,  Z]^K=[e_{U_K}S^0, \etp \sm Z]^K=[S^0, \Phi^K Z], $$
where the last equality is because $K\in U_K$. 

We have 
$$ [e_{U_K}S^0, E\cP_+ \sm Z]^K=[e_{U_K}, e_{U_K}E\cP_+ \sm Z]^K$$
and $e_{U_K}E\cP_+\sm Z\simeq 0$ by minimality of $K$. 
\end{proof}

\subsection{Basic structures}
When we work rationally, classical containment of subgroups is
replaced by cotoral inclusion. Cells $G/K_+$ are now often
decomposable, and we have basic cells $\sigma_{K, U_K}$. If $K$ is a torus
then $\sigma_K=G/K_+$, but if $K$ is not a torus then there is a base
of neighbourhoods of $K$ for any one of  which we have a proper inclusion
$$\Ig (\sigma_{K, U_K})=\Lct (K, U_K)\subset \Lcl(K)=\Ig (G/K_+). $$

\begin{lemma}
\label{lem:constructfrombasicingi}
Any $G$-spectrum $X$ can be constructed from basic cells $\sigma_{K, U_K}$
with $K$ in $\Lct (\Ig (X))$. 
\end{lemma}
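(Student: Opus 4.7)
The plan is to apply the basic Whitehead theorem (Proposition \ref{prop:basicWHT}) to a Postnikov-style cell attachment. Set $\cK=\Lct(\Ig(X))$; by transitivity of cotoral inclusion, $\cK$ is closed under cotoral specialization, and $\Ig(X)\subseteq\cK$ tautologically.

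First I would choose, for each $K\in\cK$, an open and closed neighborhood $U_K$ of $K$ in $\Phi K$ small enough that $U_K\subseteq\cK$; Lemma \ref{lem:gisigmaK} then gives $\Ig(\sigma_{K,U_K})=\Lct(U_K)_G\subseteq\cK$, using that $\cK$ is cotoral-closed. Second I would construct $Y\to X$ by iterated attachment of basic cells: start with $Y_0=\bigvee_{K,\alpha}\Sigma^{|\alpha|}\sigma_{K,U_K}$ indexed by a set of generators $\alpha$ for $[\sigma_{K,U_K},X]^G_*$ over all $K\in\cK$, take the cofibre $C_0$ of $Y_0\to X$, and iterate by repeatedly attaching further wedges of basic cells to surject onto the next $[\sigma_{K,U_K},C_n]^G_*$. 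Set $Y=\hocolim Y_n$; the map $Y\to X$ is then an isomorphism on $[\sigma_{K,U_K},-]^G_*$ for every $K\in\cK$.

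By construction $Y\in\loc(\sigma_{K,U_K}:K\in\cK)$ and $\Ig(Y)\subseteq\cK$. Combined with $\Ig(X)\subseteq\cK$, Proposition \ref{prop:basicWHT} applies to give $Y\simeq X$, which exhibits $X$ in the localizing subcategory generated by the basic cells $\sigma_{K,U_K}$ with $K\in\cK$.

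The main obstacle is the very first step: ensuring that $\cK\cap\Phi K$ is a neighborhood of $K$ in $\Phi K$, so that some open $U_K\subseteq\cK$ actually exists. For finite $X$ this is essentially automatic, since $\Ig(X)$ is itself open in the $f$-topology — it is a finite union of supports of basic cells, each of which is open by Lemma \ref{lem:gisigmaK} combined with the openness of the source and target maps from \cite{ratmack} — so $\cK$ inherits a neighborhood structure around each of its points. For a general infinite $X$ one must work harder, exploiting the fact that $\cK=\Lct(\Ig(X))$ is already cotoral-closed and using how cotoral descendants of $K$ sit inside $\Phi K$ in the $f$-topology; this is the place where all the work on the $f$-topology in Section \ref{sec:top} earns its keep.
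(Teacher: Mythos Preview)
Your strategy is the paper's: set $\cK=\Lct\Ig(X)$, iteratively attach wedges of basic cells $\sigma_{K,U_K}$ with $K\in\cK$ so as to surject on $[\sigma_{K,U_K},-]^G_*$, pass to the homotopy colimit, and invoke Proposition~\ref{prop:basicWHT}. The paper phrases it dually---forming the tower of cofibres $X=X_0\to X_1\to\cdots$, showing $X_\infty\simeq 0$, and then extracting the cell structure from the fibre tower $X^s\to X\to X_s$---but the content is identical.

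Where you differ is in your last paragraph, and here you are being \emph{more} careful than the paper. The paper simply asserts $\Ig(X_\infty)\subseteq\cK$ without comment; you correctly recognise that this amounts to $\Ig(\sigma_{K,U_K})=\Lct(U_K)_G\subseteq\cK$, i.e.\ to the existence of a clopen $U_K\subseteq\cK$ around each $K\in\cK$. However, your proposed resolution---that the $f$-topology will supply such $U_K$ for general $X$---does not work. Take $G=O(2)$ and $X=E\langle O(2)\rangle$, so $\Ig(X)=\cK=\{O(2)\}$; since the dihedral groups $D_n$ converge to $O(2)$ in $\Phi(O(2))$ and no $D_n$ lies in $\cK$, there is \emph{no} clopen neighbourhood of $O(2)$ contained in $\cK$. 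So the obstacle you flag is real and is not disposed of by the $f$-topology alone; both your sketch and the paper's literal argument leave it open. (One can repair this, for instance by enlarging $\cK$ to $\cK'=\bigcup_{K\in\cK}\Lct(U_K)_G$ and arguing with $\cK'$ in Proposition~\ref{prop:basicWHT}, but that requires separately controlling $[\sigma_{L,U_L},X_\infty]$ for the new $L\in\cK'\setminus\cK$, which neither you nor the paper does.)
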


\begin{proof}
Take $\cK =\Lct \Ig (X)$.  We may construct a map $p:P\lra X$ so that $P$ is a wedge of suspensions
of basic cells $\sigma_{K, U_K}$ for $K\in \cK$, and so that $p_*$ is surjective on $[\sigma_{K,U_K}, \cdot
]^G_*$ for all $K \in \cK$. Iterating this, we form a diagram 
$$\diagram
X \ar@{=}[r]&X_0\rto &X_1\rto &X_2\rto &\cdots \\
&P_0\uto &P_1\uto &P_2\uto &
\enddiagram$$

We take $X_{\infty}=\hocolim_s X_s$, and note that 
since $\sigma_{K,U_K}$ is small for each $K$, it follows
 $[\sigma_{K, U_K}, X_{\infty}]^G_*=0$ for $K\in \cK$.
Since  $\Ig (X_{\infty})\subseteq \Lct \Ig (X)=\cK$, it follows from
Proposition \ref{prop:basicWHT} that $X_{\infty}\simeq 0$. Arguing with the
 dual tower, we see $X$ can be constructed from cells $\sigma_{K,U_K}$:
 indeed, we define $X^s$ by the cofibre sequence
$$X^s\lra X\lra X_s. $$
By definition  $X^0\simeq *$, and we have
$$\Sigma^{-1}P_s \lra X^s\lra X^{s+1}. $$
Again $X^{\infty}=\colim_s X^s$, and since $X_{\infty}\simeq 0$, we see
$X^{\infty}\simeq X$. 
\end{proof}

\begin{remark}
One can imagine other proofs. One is to construct an analogue of the
map  $E\cF_+ \lra S^0$ for a family $\cF$. This is a spectrum 
$\elr{\cK}$ with geometric isotropy $\cK$ and a map $\elr{\cK}\lra S^0$ which is an equivalence in geometric $K$-fixed points for all
$K\in \cK$ (one construction follows from the results below).  One then mimics the rest of the proof in the classical
case. 

It then follows that  $X\simeq X \sm \elr{\cK}$. 
Now we construct $\elr{\cK}$ out of basic cells $\sigma_K$ with $K
\in \cK$, and claim that $G/H_+\sm \sigma_K$ can be constructed from 
basic cells $\sigma_{K'}$ for $K'$ cotoral in $K$.
\end{remark}

The two natural notions of finiteness for rational spectra coincide.

\begin{lemma}
 A rational $G$-spectrum is constructed from finitely many 
basic cells $\sigma_{K, U_K}$ if and only if it is constructed from finitely
many classical cells $G/K_+$.
\end{lemma}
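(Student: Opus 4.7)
My plan is to deduce the equivalence formally from Lemma \ref{lem:basicbuildsclass} together with the observation that each basic cell is a retract of the corresponding classical cell.

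First, for the ``only if'' implication, I would observe that $\sigma_{K, U_K} = G_+ \sm_K e^K_{U_K} S^0$ splits off as a summand of $G/K_+ = G_+ \sm_K S^0$: the Burnside ring idempotent $e^K_{U_K}$ decomposes $S^0$ as a $K$-spectrum into $e^K_{U_K} S^0$ and its complementary summand, and applying $G_+ \sm_K (-)$ preserves this decomposition. Hence $\sigma_{K, U_K} \in \thick(G/K_+)$. If $X$ is constructed from finitely many basic cells $\sigma_{K_1, U_1}, \ldots, \sigma_{K_n, U_n}$, then closure of thick subcategories under retracts and triangles gives $X \in \thick(G/K_{1,+}, \ldots, G/K_{n,+})$, so $X$ is constructed from the corresponding finitely many classical cells.

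For the converse, the key input is the elementary principle that for any set $\cS$ of objects in a triangulated category,
$$\thick(\cS) \;=\; \bigcup_{\cS' \subseteq \cS \text{ finite}} \thick(\cS'),$$
since the thick subcategory is built by iterating cones, shifts, and retracts, each step involving only finitely many previously constructed objects. Combined with Lemma \ref{lem:basicbuildsclass}, which places each classical cell $G/K_+$ in the thick subcategory generated by basic cells, this yields $G/K_+ \in \thick(\sigma_{L_1, U_{L_1}}, \ldots, \sigma_{L_r, U_{L_r}})$ for some finite family. If $X$ is constructed from $G/K_{1,+}, \ldots, G/K_{n,+}$, then concatenating the finite families arising from each $K_j$ exhibits $X$ as built from finitely many basic cells.

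The only non-formal ingredient here is Lemma \ref{lem:basicbuildsclass}, and the point I would want to verify is that its inductive proof produces $G/K_+$ in the \emph{thick} (not merely localizing) subcategory generated by basic cells; this is clear since the inductive step uses only the finite split $G/K_+ \simeq \sigma_{K, U_K} \vee (G_+ \sm_K e^K_{U'_K} S^0)$ together with a finite build of the second summand from basic cells at proper subgroups. I do not anticipate any substantial obstacle.
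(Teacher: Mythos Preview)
Your approach is essentially the paper's, and the ``only if'' direction matches exactly. For the ``if'' direction you and the paper both reduce to showing that $G/K_+$ is finitely built from basic cells via Lemma \ref{lem:basicbuildsclass}. The paper's argument is simply: Lemma \ref{lem:basicbuildsclass} places $G/K_+$ in the localizing subcategory generated by basic cells, and then \emph{smallness} of $G/K_+$ forces it to be a retract of a finite basic complex.

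There is a gap in your final paragraph. You assert that the inductive step of Lemma \ref{lem:basicbuildsclass} already gives a \emph{finite} build of the complementary summand $G_+\sm_K e^K_{U'_K}S^0$ from basic cells at proper subgroups, but the proof of that lemma does not establish this. The step ``since $K\not\in\Ig(\sigma_{K,U'_K})$, we find $\sigma_{K,U'_K}$ is built from cells $G/L_+$ with $L$ proper'' goes through the equivalence $\sigma_{K,U'_K}\simeq E\cP_+\sm\sigma_{K,U'_K}$, and $E\cP_+$ is an infinite complex. So the induction as written only yields membership in the localizing subcategory generated by basic cells, not the thick subcategory. To extract a finite build at this point you must invoke smallness of $\sigma_{K,U'_K}$ (equivalently of $G/K_+$)---but once you do that, you have exactly the paper's argument, and your filtered-colimit principle $\thick(\cS)=\bigcup_{\cS'\text{ finite}}\thick(\cS')$ becomes unnecessary. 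The fix is easy; just replace ``this is clear since\ldots'' with an appeal to smallness.
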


\begin{proof}
Since $\sigma_{K, U_K}$ is a retract of  $G/K_+$, a basic-finite complex is a
finite complex. The standard cell $G/K_+$ is a basic-finite complex
(it is built by basic cells using Lemma \ref{lem:basicbuildsclass},
and then $G/K_+$ is a retract of a finite basic complex using
smallness). Accordingly,  any
classical-finite complex is basic-finite.  
\end{proof}

\section{The classification of primes}
We wish to deduce the classification of prime ideals from our
classification of localizing tensor ideals. We pause to introduce the
underpinning  principle. 

\subsection{The Thomason Localization Theorem}
The reason that it is a viable strategy to first classify localizing
subcategories of infinite objects and then deduce classifications of
finite objects is Thomason's Localization Theorem. 

The proof is quite formal, but it is an extremely powerful general principle. It is well known
in various forms, but we want a version for tensor ideals in an
abstract setting; this is entirely in the spirit of \cite{Stevenson}
and I am grateful to G. Stevenson for the following proof showing that it follows
directly from the results there. 

\begin{thm}\label{thm:TLT}
 {\em (Thomason's Localization Theorem \cite{Neeman, Stevenson})}
(i) If $\C$ is a triangulated category generated by compact objects and 
$A$ is a set of small objects then 
$$\loc (A)\cap \C^c=\thick (A). $$

(ii) If $\C$ is a tensor triangulated category generated by compact objects and 
$A$ is a set of small objects then 
$$\loct (A)\cap \C^c=\thickt (A). $$
\end{thm}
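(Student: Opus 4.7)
The plan is to prove both parts by a single Verdier-quotient strategy, with (ii) a tensor refinement of the classical Neeman--Thomason theorem (i). The inclusion $\thick(A) \subseteq \loc(A) \cap \C^c$ (and its tensor analogue) is immediate: objects of $\thick(A)$ are compact because they are built from the compact set $A$ by finitely many triangles and retracts, and they lie in $\loc(A)$ by construction. So only the reverse inclusion requires work.

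For part (i), I would form the Verdier quotient $Q \colon \C \to \C/\loc(A)$. Since $A$ consists of compact objects in the compactly generated category $\C$, the subcategory $\loc(A)$ is smashing, and a standard theorem of Neeman identifies $(\C/\loc(A))^c$ (up to idempotent completion) with $\C^c/\thick(A)$. Now if $X \in \C^c \cap \loc(A)$ then $Q(X) \simeq 0$, so the image of $X$ in $\C^c/\thick(A)$ is zero; combined with the fact that $\thick(A)$ is already closed under retracts in $\C^c$, this forces $X \in \thick(A)$.

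For part (ii), I would repeat the argument in the tensor-triangulated setting. The key observation is that $\loct(A)$ is a localizing tensor ideal, so the Verdier quotient $\C/\loct(A)$ inherits a tensor-triangulated structure making $Q$ a tensor functor, and the analogous identification of compacts, due to Stevenson, says that the compact objects of the quotient are $\C^c/\thickt(A)$ up to retracts. The same diagram chase then shows $\C^c \cap \loct(A) \subseteq \thickt(A)$.

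The main obstacle is the compact-object identification in (ii). Part (i) does not formally imply (ii), because $\loc(A)$ and $\loct(A)$ genuinely differ: the latter is the localizing subcategory generated by $\{a \tensor X : a \in A,\, X \in \C\}$, an a priori proper class of generators. One must therefore verify that $\loct(A)$ is still smashing, that the quotient $\C/\loct(A)$ remains compactly generated, and that the natural functor $\C^c/\thickt(A) \to (\C/\loct(A))^c$ is fully faithful with dense image up to retracts. This is precisely the content of Stevenson's tensor-ideal version of the localization theorem, and invoking it is the essential technical input beyond (i).
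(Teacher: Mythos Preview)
Your argument for (i) is the standard Neeman--Thomason argument, and the paper simply cites Neeman for it. For (ii), however, your route and the paper's diverge substantially.

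You attempt to rerun the Verdier-quotient machinery in the tensor setting, worrying (correctly) that $\loct(A)$ is a priori generated by the proper class $\{a \tensor X : a \in A,\ X \in \C\}$, and then appealing to a tensor-ideal localization theorem that you attribute to Stevenson as a black box. The paper takes a much shorter path: it deduces (ii) \emph{directly from} (i) by proving
\[
\loct(A) = \loc(\thickt(A)).
\]
Since $\thickt(A)$ consists of compact objects (tensor products of compacts are compact), Part (i) applies with $A$ replaced by the set $\thickt(A)$, giving
\[
\loct(A) \cap \C^c = \loc(\thickt(A)) \cap \C^c = \thick(\thickt(A)) = \thickt(A)
\]
immediately.

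The equality $\loct(A) = \loc(\thickt(A))$ is the actual content. One inclusion is trivial; for the other the paper shows $\loc(\thickt(A))$ is already a tensor ideal, using two elementary lemmas of Stevenson to rewrite it as $\loc(\C \tensor \loc(A))$. This dissolves precisely the obstacle you identify---the proper-class generating set collapses to a set of compact generators---without needing to verify smashing, compact generation of the quotient, or a separate tensor analogue of Neeman's theorem. Your approach would go through if the black box you invoke exists in the stated form, but it is strictly more machinery than required, and somewhat circular in that the identification $(\C/\loct(A))^c \simeq \C^c/\thickt(A)$ you ask for is essentially equivalent to the conclusion. The paper's reduction to (i) is both shorter and more transparent.
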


\begin{remark}
We emphasize that in Part (ii), $\thickt(A)$ only permits tensoring with
compact objects, whilst $\loct(A)$ permits tensoring with arbitrary
objects. 
\end{remark}

\begin{proof}
Part (i) is \cite[2.1.3]{Neeman}. 

We deduce Part (ii) from Part (i). It suffices to show
$\loct(A)=\loc(\thickt (A))$ since then Part (i) shows
$$\loct(A)\cap \C^c=\loc (\thickt (A))\cap \C^c=\thickt(A). $$

It is clear that $\loct(A)\supseteq \loc(\thickt (A))$, so it suffices
to show that $\loc(\thickt(A))$ is an ideal. For this we calculate
$$\begin{array}{rcl}
\loc (\thickt(A))&=&\loc(\thick(\{ x\tensor a\st x\in \C^c, a\in
                     A\})) \mbox{ \cite[Lemma 3.9]{Stevenson}}\\
&=&\loc(\{ x\tensor a\st x\in \C^c, a\in A\})\\
&=&\loc( \C\tensor \loc(A)) \mbox{ \cite[Lemma 3.11]{Stevenson}}
\end{array}$$
Evidently $\C \tensor \loc (A)$ is closed under tensoring, so this is
an ideal by \cite[Lemma 3.9]{Stevenson}. 
\end{proof}

\subsection{The primes}

We finally want to prove that we have found all the primes.

\begin{lemma}
\label{lem:primenotint}
If $\wp $ is a prime and $\wp =\bigcap_{L\in A}\wp_L$ then $A$ has a
unique minimal element $L$ and $\wp=\wp_L$.
\end{lemma}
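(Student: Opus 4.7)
The plan is to argue by contradiction: a prime cannot be a proper intersection of the $\wp_L$. I first reduce to the case that $A$ is an antichain in cotoral order. If $L'<L$ cotorally in $A$, then $\wp_{L'}\subsetneq \wp_L$, so dropping $L$ leaves the intersection unchanged; since every descending chain of closed subgroups is finite, this terminates with $A$ an antichain of minimal elements. In an antichain, having a minimum is equivalent to $|A|=1$, so it suffices to show this. The identity $\wp=\wp_{L_0}$ for the resulting unique $L_0$ is then easy: each $L\in A$ is a cotoral supergroup of $L_0$, so by the cotoral closure of geometric isotropy (Proposition \ref{prop:Igctclosed}), any $Y\in \wp_{L_0}$ satisfies $\Phi^LY\simeq 0$ for all such $L$, whence $Y\in \wp$; the reverse containment is trivial from $L_0\in A$.

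\medskip

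For the contradiction suppose $L_1\neq L_2$ are in $A$. I would seek finite $X_1,X_2$ with $L_i\in \Ig(X_i)$ --- so $X_i\notin \wp_{L_i}\supseteq \wp$ --- and $\Ig(X_1)\cap \Ig(X_2)\cap A=\emptyset$ --- so $X_1\wedge X_2\in \wp$ --- contradicting primality. Natural candidates are basic cells $X_i=\sigma_{L_i,U_i}$, for which $\Ig(X_i)=\Lct((U_i)_G)$ by Lemma \ref{lem:gisigmaK}; I want a clopen neighbourhood $U_i$ of $L_i$ in $\Phi L_i$ small enough that $\Lct((U_i)_G)\cap A=\{L_i\}$. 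Since $A$ is an antichain, no $L'\in A\setminus\{L_i\}$ is cotoral in $L_i$, and the proof of Corollary \ref{lem:primeordercotoralorder} provides for each such $L'$ a clopen neighbourhood $U_i^{L'}$ of $L_i$ with $(L')_G\notin \Lct((U_i^{L'})_G)$; when $A$ is finite the intersection of finitely many such suffices.

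\medskip

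The main obstacle is the case of infinite $A$, where this intersection need not remain open. I would handle this by imitating the proof of Corollary \ref{cor:tzeroc}: the set $\ctmax(\Lct((U_1)_G)\cap \Lct((U_2)_G))$ is open and compact in the $f$-topology by Theorem \ref{thm:ctmaxcomp}, and combined with the openness of the source and target maps on $\sub_1(G)$ this allows one to shrink $U_1$ and $U_2$ until $\Lct((U_1)_G)\cap \Lct((U_2)_G)\cap A=\emptyset$. This compactness argument is the technical crux in the general case.
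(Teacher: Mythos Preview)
The paper's proof is considerably shorter and takes a different, purely abstract route: it never constructs specific spectra. Instead, assuming the conclusion fails, it asserts one can choose $L\in A$ which is \emph{not redundant} (meaning $\bigcap_{K\neq L}\wp_K\supsetneq\wp$), picks $X_L\in\wp_L\setminus\wp$ (nonempty since $\wp\subsetneq\wp_L$ for every $L$ once the conclusion fails) and $Y_L\in\bigl(\bigcap_{K\neq L}\wp_K\bigr)\setminus\wp$, and observes that $X_L\sm Y_L\in\wp_L\cap\bigcap_{K\neq L}\wp_K=\wp$ while neither factor is in $\wp$. No basic cells, no topology, no case split on the size of $A$.

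Your approach is in effect trying to \emph{justify} the paper's unproved assertion that a non-redundant $L$ exists: your basic cell $\sigma_{L_1,U_1}$ with $\Ig\cap A=\{L_1\}$ is exactly a witness $Y_{L_1}$ for non-redundancy of $L_1$. That instinct is sound --- the paper's assertion is not obvious (in $D(\Z)$ one has $(0)=\bigcap_p\wp_{(p)}$ with every factor redundant, so something specific to this category is needed). But your proposed resolution for infinite $A$ has a gap. Corollary~\ref{cor:tzeroc} requires $L_1,L_2$ to be cotorally \emph{unrelated} ($\Lct((L_1)_G)\cap\Lct((L_2)_G)=\emptyset$), whereas you only know they are cotorally \emph{incomparable}; distinct one-dimensional subtori of $\T^2$ already show the difference. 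More seriously, the compactness of $\ctmax\bigl(\Lct((U_1)_G)\cap\Lct((U_2)_G)\bigr)$ does not visibly let you avoid the arbitrary set $A$: you would need the decreasing family $\Lct((U_1)_G)\cap\Lct((U_2)_G)\cap A$ to consist of closed sets in a compact space, and nothing in your setup forces $A$ to be closed. So as written, the infinite-$A$ step does not go through.

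In short: the paper's argument is different and much shorter, but leaves the existence of a non-redundant element unexplained; your argument is more honest about the difficulty but does not actually overcome it.
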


\begin{proof}
If not we can choose a subgroup $L$ in $A$ which is not
redundant. Thus 
$$\wp =\wp_L \cap \bigcap_{K\in A\setminus \{ L\} } \wp_K, $$
and since $\wp_L$ is not redundant, we may choose $X_L\in \wp_L\setminus \wp$ and 
$Y_L\in \bigcap_{K\in A\setminus \{ L\} } \wp_K\setminus \wp$. 
This contradicts the fact that $\wp$ is prime since
$X_L\sm Y_L \in \wp$ but $X_L\not \in \wp$ and $Y_L\not \in \wp$.
\end{proof}

\begin{thm}
\label{thm:allprimes}
The prime tensor ideals of compact $G$-spectra are precisely those of
the form 
$$\wp_K=\{ X\in \Gspecc \st \Phi^K X\simeq_1 0\}$$ 
for some closed subgroup $K$. The geometric isotropy  of $\wp_K$
consists of subgroups $H$ in which $K$ is not cotoral up to conjugacy 
$$\Ig (\wp_K)=All\setminus V(K). $$

\end{thm}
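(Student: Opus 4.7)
The plan is to show every prime in $\Gspecc$ equals $\wp_L$ for some $L$ by exhibiting it as an intersection of $\wp_K$'s and invoking Lemma~\ref{lem:primenotint}; then to compute $\Ig(\wp_K)$ directly from the Localization Theorem and the existence of basic cells of fine geometric isotropy.

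First, given a prime $\wp \subseteq \Gspecc$, I would set $\mcH := \bigcup_{X \in \wp} \Ig(X)$, a conjugation-closed collection of closed subgroups. Theorem~\ref{thm:loctideals} identifies the localizing tensor ideal generated by $\wp$ in $\Gspec$ as $\Gspec\langle \mcH \rangle = \{Y : \Ig(Y) \subseteq \mcH\}$. Since $\wp$ is already a thick tensor ideal, Thomason's Localization Theorem~\ref{thm:TLT}(ii) gives
$$\wp \;=\; \thickt(\wp) \;=\; \loct(\wp) \cap \Gspecc \;=\; \{Y \in \Gspecc : \Ig(Y) \subseteq \mcH\}.$$
Equivalently $Y \in \wp$ iff $\Ig(Y) \cap (\mathrm{All}\setminus \mcH) = \emptyset$ iff $Y \in \wp_K$ for every $K \notin \mcH$. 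Hence $\wp = \bigcap_{K \notin \mcH} \wp_K$; since $\wp$ is proper the index is nonempty, and Lemma~\ref{lem:primenotint} forces $\wp = \wp_L$ for some $L$.

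For the geometric isotropy statement, unpack $\Ig(\wp_K) = \bigcup_{X \in \wp_K} \Ig(X)$. If $H \in V(K)$, i.e., $K$ is cotoral in $H$ up to conjugacy, then for every finite $X$ with $H \in \Ig(X)$ Proposition~\ref{prop:Igctclosed} forces $K \in \Ig(X)$, whence $X \notin \wp_K$ and $V(K) \cap \Ig(\wp_K) = \emptyset$. In the reverse direction, if $H \notin V(K)$, then running the argument of Corollary~\ref{lem:primeordercotoralorder} with the roles of $K$ and $L$ interchanged produces a clopen neighbourhood $U_H$ of $H$ in $\Phi H$ with $(K)_G \cap \Lct((U_H)_G) = \emptyset$; by Lemma~\ref{lem:gisigmaK} the basic cell $\sigma_{H,U_H}$ is finite, has $H \in \Ig(\sigma_{H,U_H})$ and $K \notin \Ig(\sigma_{H,U_H})$, hence lies in $\wp_K$ and witnesses $H \in \Ig(\wp_K)$.

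The hard part is the first paragraph: ruling out exotic primes requires transporting the classification of localizing tensor ideals of $\Gspec$ back to $\Gspecc$, and this transfer is exactly the content of Thomason's Localization Theorem. Once $\wp$ is displayed as an intersection of $\wp_K$'s, Lemma~\ref{lem:primenotint} finishes the prime decomposition, and the isotropy identification is a direct combination of Proposition~\ref{prop:Igctclosed} with the fine-cell technology of Section~\ref{sec:basiccells}.
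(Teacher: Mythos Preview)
Your proof is correct and follows the paper's approach closely: both reduce to $\wp=\{X\in\Gspecc:\Ig(X)\subseteq\Ig(\wp)\}$ via Theorem~\ref{thm:loctideals} and Thomason's Localization Theorem~\ref{thm:TLT}. The only difference is in the endgame: you rewrite $\wp$ as $\bigcap_{K\notin\mcH}\wp_K$ and invoke Lemma~\ref{lem:primenotint}, whereas the paper argues directly that the complement of $\Ig(\wp)$ has a unique cotorally minimal class by producing basic cells $\sigma_{K_1,U_1},\sigma_{K_2,U_2}$ whose smash product would contradict primality. Your route is arguably cleaner, since Lemma~\ref{lem:primenotint} is stated immediately before the theorem precisely to be used this way, and it abstracts away the basic-cell construction; the paper's direct argument is more self-contained but essentially repeats the lemma's logic in concrete form. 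You also supply an explicit verification of $\Ig(\wp_K)=\mathrm{All}\setminus V(K)$, which the paper asserts but does not prove in the body of the argument.
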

\begin{proof}
If $G$ is non-trivial the collection of contractible spectra is not
prime since if $F_1$ and $F_2$ are non-conjugate finite subgroups 
the smash product of $\sigma_{F_1}\sm \sigma_{F_2}$ is contractible.  

From Theorem \ref{thm:loctideals}, if $\wp$ is prime then 
$$\loct (\wp)=\sum_{K\in \Ig(\wp)}\GspecK.$$
By Thomason's Localization Theorem \ref{thm:TLT}, if  $Y\in \loct(\wp)$ is compact then $Y\in \thickt(\wp)=\wp$. Hence
$$\wp=\{ X \in \Gspec^c\st \Ig(X)\subseteq  \Ig(\wp)\}.$$

Now consider the complement of $\Ig (\wp)$. If it has a single
cotorally  minimal conjugacy class $K$ then $\wp=\wp_K$. 

If  $K_1, K_2$ are cotorally  minimal in the complement of $\Ig (\wp)$ then if $K_1$
and $K_2$ are not conjugate, we may choose open and closed neighbourhoods $U_i$ of $K_i$ in
$\Phi K_i$. Therefore  $\sigma_{K_1, U_1}\sm \sigma_{K_2, U_2}\in \wp$ but
$\sigma_{K_1, U_1}\not \in \wp$.
 \end{proof}

\section{Geometric isotropy of finite spectra}
\label{sec:giforfinite}
In this section we will give the classification of the collections of
subgroups occurring as the geometric isotropy of a finite
$G$-spectrum. Once again we apply Thomason's Localization Theorem to
deduce facts about thick tensor ideals of finite spectra from facts
about localizing tensor ideals of arbitrary spectra.

\subsection{Thick tensor ideals of finite spectra}
The geometric isotropy coincides with the support, and the point of
support is that it determines the thick tensor ideals.

\begin{thm}
If $X$ and $Y$ are finite $G$-spectra then 
if $\Ig(Y)\subseteq \Ig (X)$ then  $Y\in \thickt (X)$. 
\end{thm}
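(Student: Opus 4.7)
The plan is to reduce this to the two major results already established in the paper: the classification of localizing tensor ideals (Theorem \ref{thm:loctideals}) and Thomason's Localization Theorem \ref{thm:TLT}. The logical structure is that we first pass to the infinite world, use the hypothesis there, and then return to the finite world via Thomason.

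First, I would apply Theorem \ref{thm:loctideals} to rewrite
$$\loct(X) = \sum_{K \in \Ig(X)} \GspecK, \qquad \loct(Y) = \sum_{K \in \Ig(Y)} \GspecK.$$
The hypothesis $\Ig(Y) \subseteq \Ig(X)$ then immediately gives $\loct(Y) \subseteq \loct(X)$, and in particular $Y \in \loct(X)$. One small point to check is that $\Ig(Y)$ is closed under $G$-conjugacy (which is automatic from the definition) so that Theorem \ref{thm:loctideals} applies directly.

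Second, since $X$ is finite it is a compact (small) object of $\Gspec$, so we may apply Thomason's Localization Theorem \ref{thm:TLT}(ii) to the single-element set $A = \{X\}$ to obtain
$$\loct(X) \cap \Gspec^c = \thickt(X).$$
Combining this with $Y \in \loct(X)$ and the fact that $Y$ is finite, hence lies in $\Gspec^c$, we conclude $Y \in \thickt(X)$, as required.

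There is essentially no obstacle once the two ingredients above are in place; the content of the theorem is really in Theorem \ref{thm:loctideals} (the minimality/classification of the single-isotropy categories $\GspecK$) and in Thomason's theorem. The proof is just the observation that these two results fit together to transfer a statement about inclusions of geometric isotropy into a statement about membership in thick tensor ideals of compact objects.
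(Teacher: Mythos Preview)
Your proposal is correct and follows essentially the same route as the paper: invoke Theorem~\ref{thm:loctideals} to see that $\Ig(Y)\subseteq\Ig(X)$ forces $Y\in\loct(X)$, then apply Thomason's Localization Theorem~\ref{thm:TLT}(ii) to the compact object $X$ to conclude $Y\in\loct(X)\cap\Gspec^c=\thickt(X)$. The paper's proof is just a more compressed version of what you wrote.
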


\begin{proof}
By Theorem \ref{thm:loctideals} $\loct(X)=\sum_{K\in \Ig (X)}\GspecK$, $Y \in 
\loct(X)$. By Thomason's Localization Theorem \ref{thm:TLT}, since $Y$
is  finite this means $Y\in \thickt(X)$. 
\end{proof}

The more interesting question is which sets occur as $\Ig (X)$ for finite
$G$-spectra $X$. Since the geometric isotropy is closed under cotoral
specialization by  Proposition \ref{prop:Igctclosed}, the set is
determined by its set $\ctmax(\Ig (X))$ of cotorally maximal
subgroups. 

For tori, the classification was given in \cite{spctq} (it will also follow
from Theorem \ref{thm:ctmaxcomp} below). 

\begin{example}
If $G$ is a torus then the sets $\Ig (X)$ for finite $X$ are precisely
those with finitely many cotorally maximal elements: the sets of the
form $\Lct (A)$ where $A$ is a finite set of cotorally unrelated
subgroups. 
\end{example}

However, for more general subgroups the statement will inevitably be
more complicated. 

\begin{example}
If $G=O(2)$ then for any $n$ the set  $U_n$ consisting of $O(2)$ and
the  dihedral subgroups of order $\geq 2n$ is clopen in $\Phi O(2)$ we see
$\ctmax(\Ig (\sigma_{O(2), U_n}))=(O(2), U_n)$.  
\end{example}

\subsection{Characterization of thick tensor ideals}

The following finiteness theorem is fundamental.

\begin{thm}
\label{thm:ctmaxcomp}
If $X$ is a finite spectrum then $\ctmax(\Ig (X))$ is an open compact
set in the $f$-topology.  All open, compact and cotorally undrelated
sets occur as $\ctmax(\Ig(X))$ for some finite $G$-spectrum $X$. 
\end{thm}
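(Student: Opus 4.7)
The plan is to prove the two directions separately, leveraging the fact that finite rational $G$-spectra decompose into finitely many basic cells.

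\textbf{Realization direction.} Given an open, compact, cotorally unrelated $A$, I would build $X$ as a finite wedge of basic cells. For each $(K)_G \in A$, Lemma~\ref{lem:quottop} implies that the preimage of $A$ in the totally disconnected compact Hausdorff space $\Phi K$ is $f$-open and contains $K$; inside it I choose a clopen basic neighborhood $U_K$ of the form $U_K^\eps$ for small $\eps$, so that its image $V_K$ in $\sub(G)/G$ is an $f$-open subset of $A$ containing $(K)_G$ (the lemma immediately preceding Lemma~\ref{lem:quottop} guarantees this image is $f$-open). Compactness of $A$ delivers finitely many $V_{K_1}, \dots, V_{K_n}$ covering $A$, and I set $X = \bigvee_i \sigma_{K_i, U_{K_i}}$. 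By Lemma~\ref{lem:gisigmaK}, $\Ig(X) = \bigcup_i \Lct(V_{K_i}) = \Lct(A)$; and since $A$ is cotorally unrelated, each element of $A$ is cotorally maximal in $\Lct(A)$, and every cotorally maximal element of $\Lct(A)$ lies in $A$, giving $\ctmax(\Ig(X)) = A$.

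\textbf{Necessity direction.} Given a finite $X$, I would express it as built from finitely many basic cells $\sigma_{K_i, U_{K_i}}$ with $K_i \in \Lct(\Ig(X))$ (combining the equivalence of classical-finite and basic-finite with Lemma~\ref{lem:constructfrombasicingi}). Writing $V_i$ for the image of $U_{K_i}$ in $\sub(G)/G$, each $V_i$ is $f$-open and compact, and subadditivity of $\Ig$ along triangles yields $\Ig(X) \subseteq \bigcup_i \Lct(V_i)$, so $M := \ctmax(\Ig(X)) \subseteq \bigcup V_i$.

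My strategy for showing $M$ is itself $f$-open and compact is to refine the decomposition of $X$ so that each $K_i \in M$ and $V_i \subseteq M$; once this is achieved, $M = \bigcup V_i$ is a finite union of $f$-opens (hence $f$-open) and closed in the compact set $\bigcup V_i$ (hence compact). The refinement proceeds inductively on the cells: whenever the build contains $\sigma_{K, U_K}$ with $K \in \Lct(\Ig(X)) \setminus M$, there is a cotoral super $K' \geq K$ with $K' \in M$, and one must replace that cell by cells supported at cotoral maxima, possibly after shrinking the neighborhoods.

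The main obstacle is this refinement step. A cell at $K$ cannot simply be substituted by a cell at $K' \supsetneq K$ without changing the geometric isotropy in controlled but delicate ways, since the cotoral downward closure of a neighborhood of $K'$ does not in general contain that of a neighborhood of $K$. I plan to use the Localization Theorem applied to subtori of $W_G(L)$ at candidate maxima $L \in M$, together with the finiteness of $X$ (so that only finitely many cotoral chains come into play), to control how $\Phi^{L'} X$ varies as $L'$ runs over an $f$-neighborhood of $L$, and thereby to produce a compatible rearrangement whose basic generators themselves realize the cotoral maxima of the support of $X$. The technical heart of the argument lies in this rearrangement, which exploits the interplay between the cotoral and $f$-topological structures on $\sub(G)/G$.
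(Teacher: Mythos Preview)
Your realization direction is essentially correct and matches the paper's approach; choosing the neighborhoods $U_K$ so that their images already lie inside $A$ is if anything cleaner than the paper's procedure of first covering by arbitrary basic sets and then stripping out cotoral redundancies afterwards.

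The necessity direction, however, contains a circularity that you have not resolved. Your refinement step requires that around each $K \in M$ one can choose a basic $f$-neighborhood $V_K$ with $V_K \subseteq M$; but the existence of such a neighborhood is exactly the assertion that $M$ is $f$-open, which is one of the two conclusions you are trying to establish. The same issue already appears one step earlier: deducing $M \subseteq \bigcup V_i$ from $\Ig(X) \subseteq \bigcup_i \Lct(V_i)$ needs $V_i \subseteq \Ig(X)$ (so that a cotorally maximal $m \leq v \in V_i$ forces $m = v$), and arranging $V_i \subseteq \Ig(X)$ again presupposes that $\Ig(X)$ is $f$-open at $K_i$. The Localization Theorem, which you invoke to control $\Phi^{L'}X$ near a maximum, governs behaviour along cotoral chains; it says nothing about $f$-convergent sequences of cotorally unrelated subgroups, and so cannot by itself supply openness.

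The paper breaks this circle by proving openness first, via a direct geometric argument that does not pass through any cell decomposition: restrict $X$ to $K$, use Illman's theorem and Freudenthal to realize it as the suspension spectrum of a finite $K$-CW-complex $Z$, and observe that for $L$ sufficiently $f$-close to $K$ (hence, by Montgomery--Zippin, a subgroup of $K$ avoiding the finitely many non-$K$-fixed orbit types) one has $\Phi^L Z = \Phi^K Z$. Only with openness in hand does the paper choose $U_K \subseteq \Ig(X)$ and apply Thomason's Localization Theorem to extract a finite subcover, yielding compactness. Your proposed rearrangement of basic cells is aiming at this second step, but it cannot substitute for the first.
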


\begin{proof}
First we show that $\ctmax(\Ig (X))$ is open. 
Suppose that $K\in \Ig (X)$. If $K$ has no neighbourhood in $\Ig
(X)$ we can find a sequence of subgroups $L_i$ tending to $K$ with
$L_i\not \in \Ig (X)$, and by the Montgomery-Zippin theorem we may
suppose $L_i$ is a subgroup of $K$.  It suffices to view $X$ as a
$K$-spectrum, and by Illman's Theorem that too is finite. By the
Freudenthal  Suspension theorem, we may
suppose $X$ it is the suspension spectrum of a
finite $K$-CW-complex $Z$. There are finitely many non-$K$-fixed cells, 
so for $i$ sufficiently large, the subgroups $L_i$  are not conjugates
to subgroups of them and $\Phi^{L_i}Z=\Phi^KZ$. 

Next we show that $\ctmax(\Ig (X))$ is compact. 
For each $K\in \Ig (X)$ we choose a clopen neighbourhood $U_K$ of $K$
in $\Phi K$. Note that and since $\Ig (X)$ is open we may suppose $U_K\subseteq
\Ig (X)$. By Theorem \ref{thm:loctideals}, $X\in \loct \{ \sigma_{K, U_K}\st
K \in \Ig (X)\}$, and since $X$ is small, there are finitely many
subgroups $K_1, \ldots, K_N$ so that $X$ is finitely built by
$\sigma_{K_1, U_1}, \ldots , \sigma_{K_N, U_N}$. Considering the dual
process, $X$ can be converted to a point by using these same cells. 
Since $\ctmax(\sigma_{K, U_K})=(K, U_K)$ the only way that an element
of $\ctmax(\Ig (X))$ can be removed is if it lies in some $U_i$, so
that $U_1, \ldots , U_N$ is a finite cover of $\ctmax(\Ig (X))$.

Finally, we show that all compact, open and cotorally unrelated
subsets occur as the cotorally maximal elements in the geometric isotropy of a wedge of basic cells. 
All compact open subsets of $\sub (G)/G$ are finite unions of sets
$(K, U_K)$. We claim the same is true for those which are cotorally
unrelated. Such sets are realizable as the geometric isotropy of
$\bigvee_K \sigma_{K, U_K}$.  It remains to observe that if there are any cotoral
relations among the sets $(K,U_K)$ then we may replace them by a
smaller collection without cotoral relations.

Indeed, we need only note that for any $K, L$ the set
 $(L, U_L)\setminus \Lct (K, U_K)$ is a finite union of sets $(M,
 U_M)$. This follows since $\Phi L \cap \Lct (K, U_K)$ is an open 
set. This follows from Lemma \ref{lem:quottop}.
\end{proof}

Some may prefer the following reformulation. A thick tensor ideal is
called {\em finitely generated} if it is generated by a finite number
of  small spectra (or equivalently, by just one). 

\begin{cor}
\label{cor:geomiso}
The finitely generated thick tensor ideals of finite rational $G$-spectra are precisely
the fibres of geometric isotropy
$$\Ig : \mbox{finite-rational-$G$-spectra}\lra \mathcal{P}(\sub
(G)); $$
the image consists of collections of subgroups which are closed under cotoral 
specialization, and whose cotorally maximal parts are open and compact. \qqed
\end{cor}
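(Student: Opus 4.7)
The plan is to identify each finitely generated thick tensor ideal with the data of a single subset of $\sub(G)$, and then read off the characterization of which subsets arise from the results already established in Section \ref{sec:giforfinite}.

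First I would show that, for any finite $G$-spectrum $X$,
$$\thickt(X) \;=\; \{\, Y \text{ finite} \st \Ig(Y) \subseteq \Ig(X)\,\}.$$
The inclusion $\subseteq$ is a routine consequence of $\Phi^K$ being a tensor-triangulated functor: geometric isotropy is subadditive under completing triangles, preserved by retracts, and satisfies $\Ig(X \sm Z) \subseteq \Ig(X)$ for every compact $Z$. The reverse inclusion is exactly the first theorem of Section \ref{sec:giforfinite} (deduced there from Theorem \ref{thm:loctideals} via Thomason's Localization Theorem). Consequently the assignment $X \mapsto \Ig(X)$ descends to a bijection between finitely generated thick tensor ideals of $\Gspecc$ and the image of $\Ig$ on finite spectra.

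Next I would describe that image. By Proposition \ref{prop:Igctclosed} every $\Ig(X)$ with $X$ finite is closed under cotoral specialization, and by the opening half of Theorem \ref{thm:ctmaxcomp} the set $\ctmax(\Ig(X))$ is open and compact in the $f$-topology. Conversely, given a collection $\mathcal{H}$ closed under cotoral specialization whose cotorally maximal part $A = \ctmax(\mathcal{H})$ is open and compact, the realization half of Theorem \ref{thm:ctmaxcomp} produces (using a finite wedge of basic cells $\sigma_{K,U_K}$ indexed by $A$) a finite $G$-spectrum $X$ with $\ctmax(\Ig(X)) = A$; since $\Ig(X)$ is automatically cotorally closed, $\Ig(X) = \Lct(A) = \mathcal{H}$. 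Combined with the preceding paragraph this proves both halves of the corollary.

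The whole argument is essentially bookkeeping: Theorem \ref{thm:ctmaxcomp} and the first theorem of Section \ref{sec:giforfinite} do the real work, and the only mildly subtle point is the first step, the observation that $\thickt(X)$ is already saturated with respect to $\Ig$. Consequently I do not expect any substantial obstacle.
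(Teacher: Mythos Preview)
Your proposal is correct and matches the paper's intended argument: the paper presents Corollary~\ref{cor:geomiso} as an immediate reformulation of Theorem~\ref{thm:ctmaxcomp} together with the first theorem of Section~\ref{sec:giforfinite}, giving no explicit proof, and your write-up simply spells out that bookkeeping. The only point worth noting is terminological: the paper's phrase ``fibres of geometric isotropy'' is loose, and your reading---that $\thickt(X)=\thickt(Y)$ iff $\Ig(X)=\Ig(Y)$, so that finitely generated thick tensor ideals biject with the image of $\Ig$---is the intended one.
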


\section{Zariski topologies}
\label{sec:top}
We have shown  that the map 
$$\sub (G)/G\stackrel{\cong} \lra \spc (\Gspectra)$$ 
associating a closed subgroup $K$ of $G$ with the Balmer prime $\wp_K$
is a bijection. Furthermore we have shown that this is an isomorphism
of posets in the sense that 
 $\wp_L\subseteq \wp_K$ if and only if a conjugate of $L$ is cotoral in
$K$. The purpose of this section is to show that topological
structures correspond. 

\subsection{The Zariski topology}
According to \cite[Definition 2.1]{Balmer1}, the Zariski topology on $\spc (\Gspectra)$ has closed sets
$$Z(\fX)=\{ \wp \st \wp \cap \fX=\emptyset\} =\bigcap_{X\in \fX}Z(X)$$
where $\fX$ runs through collections of finite rational
$G$-spectra. Since $X\in \wp_K$ if and only if $K\not \in \Ig (X)$,
under the correspondence between primes and subgroups,  we have 
$$Z(X)=\Ig(X), $$
and the topology is generated by the geometric isotropy of finite spectra. 

We may recover the poset structure on $\spc (\Gspectra)$ from the
topology since the closure of a prime consists of all primes contained
in it:
$$\overline{\{\wp\}}=\{ \mathfrak{q} \st \mathfrak{q} \subseteq
\wp\}. $$

We are rather used to the Zariski topology in the prime spectrum of a
Noetherian ring,  where a collection $A$ of maximal ideals is closed if and only if $A$
is finite (as forced by the fact that maximal ideals are
closed). However $\spc (\Gspectra)$ usually has the property that  many other subsets are
also closed. We will see that this can be viewed as saying that the
set of primes is a topological poset, with the poset encoding the
closures of points (a standard construction for $T_0$-spaces),  and
the much coarser topology viewing points at the ends of non-trivial morphisms as `far apart'.  

\begin{remark}
We may always view the Zariski topology on the Balmer spectrum 
as the $f$-topology on a poset. 

The closures of points form a poset since the Zariski topology 
is $T_0$. The $f$-topology is defined  to be generated by the closed sets $\ctmax{V}$ where $V$
runs through the generating closed sets $\supp(X)$. These are
generators since  $\ctmax(V\cup W)\supseteq \ctmax (V) \cup
\ctmax(W)$. 

We may recover the original Zariski topology by taking specialization closures. Again
the images of generators are generators since $\Lambda (A\cup
B)\supseteq \Lambda (A) \cup \Lambda (B)$. 
\end{remark}

\subsection{The Zariski $f$-topology}
We can also generate a Zariski $f$-topology (or $zf$-topology) on $\sub(G)/G$ by combining
the $f$-topology and the partial order. We define the sets  $\Lct
(V)$ to be  $zf$-closed   whenever $V$ an $f$-closed set, and then consider the
topology they generate. 

We note that the set $\Lct (A)$ consists of all sources of arrows
arriving in $A$, so that $\Lct (A)= s(t^{-1}(A))$. Since $s$ is an
open map, it is $f$-open if $A$ is $f$-open. 

\subsection{The homeomorphism}

\begin{thm}
Associating a closed subgroup $K$ to a prime $\wp_K$ induces a homeomorphism
$$(\sub(G)/G, zf)\cong (\spc (\Gspectra), Zariski). $$
\end{thm}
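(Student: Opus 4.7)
The bijection $(K)_G \mapsto \wp_K$ is given by Theorem \ref{thm:allprimes}. To establish the homeomorphism, the plan is to match the generating closed sets of the two topologies under this bijection.

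Recall that the Zariski topology on $\spc(\Gspec)$ has closed sets generated by $\supp(X) = \Ig(X)$ for $X$ a finite rational $G$-spectrum, while the $zf$-topology on $\sub(G)/G$ has closed sets generated by $\Lct(V)$ for $V$ $f$-closed. I will show these two families of generators coincide as subsets of $\sub(G)/G$.

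First, I would verify that every Zariski generator $\Ig(X)$ is $zf$-closed. By Theorem \ref{thm:ctmaxcomp}, $\Ig(X) = \Lct(A_X)$ where $A_X = \ctmax(\Ig(X))$ is $f$-open and $f$-compact, with cotorally-unrelated elements of finite index in their normalizer. Inspecting the proof of that theorem realizes $A_X$ as a finite union of basic images $(K_i, U_{K_i})_G$ with $U_{K_i}$ clopen in $\Phi K_i$. Using Lemma \ref{lem:quottop} and the clopen structure of each $\Phi K_i$, these basic images are $f$-clopen in $\sub(G)/G$, so $A_X$ is $f$-closed and $\Ig(X)=\Lct(A_X)$ is a $zf$-closed generator.

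Second, I would show that every $zf$-generator $\Lct(V)$ with $V$ $f$-closed is Zariski closed. Since $\Ig$-sets are cotorally downward closed (Proposition \ref{prop:Igctclosed}), it suffices to produce, for each $(K)_G \notin \Lct(V)$, a finite spectrum $X_K$ with $V \subseteq \Ig(X_K)$ and $(K)_G \notin \Ig(X_K)$; then $\Lct(V) = \bigcap_K \Ig(X_K)$ expresses it as Zariski closed. For each $H \in V$, the separation argument of Lemma \ref{lem:primeordercotoralorder} (using the closedness of $t(s^{-1}(K) \cap \{K\}\times \Phi H)$) furnishes a clopen $f$-neighborhood $U_H$ of $(H)_G$ in $\Phi H$ with $(K)_G \notin \Lct((H,U_H)_G)$. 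The family $\{(H,U_H)_G\}_{H\in V}$ is an $f$-open cover of $V$; to conclude, we extract a finite subcover $\{(H_i, U_{H_i})_G\}_{i=1}^n$ by passing to the cotoral-maximal correspondence $H\mapsto H^{\max}$ into the compact space $\Phi G$ and invoking compactness there. Taking $X_K = \bigvee_i \sigma_{H_i, U_{H_i}}$ then yields a finite spectrum with the required geometric isotropy.

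The main obstacle is the finite extraction in the second direction: since $V$ itself is only $f$-closed (and need not be compact), one has to leverage the profinite structure of $\Phi G$ to transport compactness to a setting where finitely many clopen $f$-neighborhoods suffice. The delicate point is ensuring that the replacement of $V$ by cotoral maxima inside $\Phi G$ preserves the separation from $(K)_G$; this is precisely what the $f$-topology on $\sub(G)/G$ from \cite{ratmack} is designed to accommodate, via the identification of its $f$-closed generators with $\ctmax$ of the Zariski closed sets.
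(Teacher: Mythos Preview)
Your first direction (that every $\Ig(X)$ is $zf$-closed) is correct and is exactly what the paper does: Theorem \ref{thm:ctmaxcomp} gives $\ctmax(\Ig(X))$ as a finite union of basic sets $(K_i,U_{K_i})_G$, whence $\Ig(X)=\Lct(\bigcup_i(K_i,U_{K_i})_G)$ with the union $f$-closed.

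Your second direction takes a different and substantially harder route than the paper, and the argument has a genuine gap. The paper does \emph{not} attempt to show that $\Lct(V)$ is Zariski closed for an arbitrary $f$-closed $V$. Instead it observes that the $zf$-topology is already generated by the special subfamily $\Lct(K,U_K)$ (with $U_K$ clopen in $\Phi K$), and these are \emph{equal} to $\Ig(\sigma_{K,U_K})$ by Lemma \ref{lem:gisigmaK}, hence Zariski closed. No compactness or covering argument is needed at all: one simply matches generators to generators.

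Your covering argument, by contrast, needs a finite subcover of $V$, and the proposed extraction ``by passing to the cotoral-maximal correspondence $H\mapsto H^{\max}$ into the compact space $\Phi G$'' does not work as stated. First, there is no well-defined map $\sub(G)/G\to\Phi G$ of this kind: cotoral maxima are taken \emph{within a given set}, and for instance $\ctmax(\Ig(\T_+))=\{1\}$ with $1\notin\Phi\T$, so even for supports of finite spectra the maxima need not land in $\Phi G$. Second, an $f$-closed $V$ is not compact in general, and you give no mechanism for transporting your open cover $\{(H,U_H)_G\}_{H\in V}$ to a cover of anything compact while preserving the separation from $(K)_G$. You flag this yourself as ``the main obstacle'' and ``delicate'', but the sketch does not resolve it. The paper's approach sidesteps the issue entirely by working with the smaller generating family from the outset.
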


\begin{proof}
The Zariski topology on $\sub(G)/G$ is generated by $\Lct (K,U_K)=\Ig
(\sigma_{K,U_K})$, so that the Zariski topology on $\spc(\Gspectra)$
is at least as fine. It remains to show that for any finite spectrum
$X$, the geometric isotropy $\Ig (X)$ is already in the topology
generated by the basic cells. This was the main part of Theorem
\ref{thm:ctmaxcomp}. 
\end{proof}

\section{Semifree $\protect \T$-spectra}
\label{sec:semifree}
The point of this section is to show that it is much harder to
classify thick subcategories than thick tensor-ideals. It will suffice
to look at semifree $\T$-spectra for the circle group $\T$ i.e., those $\T$-spectra with $\Ig
(X)\subseteq \{ 1, \T\}$. The model for these \cite{s1q}  is sufficiently simple
that we may be explicit. 

\subsection{The model of semifree $\protect\T$-spectra}
The  model $\cA_{sf}(\T)$ of semifree $\T$-spectra can be obtained from the model $\cA (\T)$ of all $\T$-spectra
by restriction, but it is easier to repeat the construction from scratch.  
In fact $\cA_{sf}(\T)$ is the abelian category of objects $\beta : N \lra \Q[c,c^{-1}]\tensor V$
where $N$ is a $\Q[c]$-module, $V$ is a graded $\Q$-vector space and $\beta$ is the $\Q[c]$
map inverting $c$. In effect, we have  the $\Q [c]$-module $N$, together with a chosen `basis'
$V$. Morphisms are commutative squares
$$\diagram 
M\rto^{\theta} \dto&N\dto\\
\Q[c,c^{-1}]\tensor U \rto^{1\tensor \phi}&\Q[c,c^{-1}]\tensor V
\enddiagram$$
The category $\cA_{sf}(\T)$ is of injective dimension 1, and the ring $\Q[c]$ is evenly graded, so every object of 
$d\cA_{sf}(\T)$ is formal, and we will identify semifree $\T$-spectra  $X$ (or objects of $d\cA_{sf}(\T)$) with their
image $\piA_*(X)$ in the abelian category $ \cA_{sf}(\T)$.

The fact we are talking about {\em ideals} is essential for Corollary \ref{cor:geomiso}. If we consider
semifree $G$-spectra when $G$ is the circle then there are just two thick tensor
ideals of finite spectra
\begin{itemize}
\item  free spectra (with geometric isotropy $1$, generated by $G_+$) 
\item  all spectra (with geometric isotropy $\{ 1, G\}$, generated by $S^0$). 
\end{itemize}
On the other hand, the thick subcategory generated by $S^0$ (without the ideal
property) does not contain $G_+$, and we will make it explicit. The classification of 
thick subcategories in general seems complicated, and we do not give a complete answer. 

\subsection{Wide spheres}
The small objects with $\beta$ injective are the objects $X=(\beta:
N\lra \Q[c,c^{-1}]\tensor V )$ with $\beta$ injective, $V$ finite
dimensional and $N$ bounded above; these objects are called {\em wide
  spheres} \cite{s1q}. 

We note that $\Q [c,c^{-1}]\tensor V$ is the same in each even degree
and the same in each odd degree. We therefore let
$$|V|_0=\bigoplus_{k}V_{2k} \mbox{ and }
|V|_1=\bigoplus_{k}V_{2k+1}. $$
We will fix  isomorphisms 
$$|V|_0\cong (\Q [c, c^{-1}]\tensor V)_0\mbox{ and }|V|_1\cong (\Q [c,
c^{-1}]\tensor V)_1, $$
and then use multiplication by powers of $c$ to identify other graded
pieces of $\Q [c,c^{-1}] \tensor V$ with the appropriate one. 

We will want to think of stepping down the degrees in steps of 2, so
we take
$$|V|_{\geq 2k}=\bigoplus_{n\geq k}V_{2n} \mbox{ and }
|V|_{\geq 2k+1}=\bigoplus_{n\geq k}V_{2n+1}$$
for the parts of $V$ above a certain point, but in the same parity. 

Similarly, we move $N_{2k}$ into degree 0 by multiplication by $c^k$:
$$\Nb_{2k}:=c^kN_{2k}\subseteq |V|_0 \mbox{ and } \Nb_{2k+1}:=c^kN_{2k+1}\subseteq |V|_1.$$
Having established the framework, we will restrict the discussion to
the even part, leaving the reader to make the odd case explicit. 

 If $X$ is nonzero in even degrees, since
$X$   is small there is a highest degree $2a-2$ in which  $N$ is non-zero, and since
$N[1/c]=\Q[c, c^{-1}]\tensor V$ there is highest degree $2b$ in which
$N$ coincides with $|V|_0$. Accordingly, we have a finite filtration 
$$0=\Nb_{2a} \subseteq \Nb_{2a-2} \subseteq \cdots \subseteq \Nb_4\subseteq \Nb_2 \subseteq \cdots
\subseteq \Nb_{2b}=|V|_0. $$

We wish to consider two increasing filtrations on $|V|_0$
$$\cdots \subseteq |V|_{\geq 2k+2}\subseteq |V|_{\geq 2k}\subseteq |V|_{\geq 2k-2}\subseteq
\cdots \subseteq |V|_0$$
and 
$$\cdots \subseteq \Nb_{ 2k+2}\subseteq \Nb_{\geq 2k}\subseteq \Nb_{\geq 2k-2}\subseteq
\cdots \subseteq  |V|_0. $$

\subsection{Two conditions on wide spheres}
In crude terms, we will show the thick subcategory generated by $S^0$ consists of
objects so that (a) the dimensions of the spaces in the $V$- and $N$-filtrations agree and (b)
the $V$ filtration is slower than the $cN$ filtration. The purpose of
this subsection is to introduce the two conditions. 

\begin{cond}
\label{cond:thickS}
We say that a wide sphere is {\em untwisted} if it satisfies the
following two conditions
\begin{enumerate}
\item $\dim (\Nb_{i})=\dim (|V|_{\geq i})$ for all $i$
\item $V\cap cN =0$
\end{enumerate}
\end{cond}

We will be showing that these characterize the thick subcategory
generated by $S^0$. We must at least show that the conditions are
inherited by retracts, and this verification will lead us to some
useful introductory discussion.

\begin{lemma}
\label{lem:condretract}
Condition \ref{cond:thickS} is closed under passage to retracts. 
\end{lemma}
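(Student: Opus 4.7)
The plan is to express the retract as a direct summand in the abelian category $\cA_{sf}(\T)$, split all the data compatibly, and verify each condition on the summand.  Since $\cA_{sf}(\T)$ is abelian, a retract $Y$ of a wide sphere $X$ is already a direct summand, giving $X\cong Y\oplus Y'$.  Writing $X=(N,V,\beta)$, $Y=(M,U,\gamma)$ and $Y'=(M',U',\gamma')$, this translates into $N=M\oplus M'$ as $\Q[c]$-modules, $V=U\oplus U'$ as graded vector spaces, and $\beta=\gamma\oplus\gamma'$.  All the subspaces derived from these split accordingly: $cN=cM\oplus cM'$, and inside $|V|_0=|U|_0\oplus |U'|_0$ we have $\Nb_{2k}=\overline{M}_{2k}\oplus \overline{M'}_{2k}$ and $|V|_{\geq 2k}=|U|_{\geq 2k}\oplus |U'|_{\geq 2k}$.

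Condition~(2) passes to the summands immediately: the identity $V\cap cN=(U\cap cM)\oplus (U'\cap cM')$ shows that the vanishing of $V\cap cN$ forces both $U\cap cM=0$ and $U'\cap cM'=0$.

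For Condition~(1), taking dimensions in the splittings and using $\dim\Nb_{2k}=\dim|V|_{\geq 2k}$ yields
\[
  \dim\overline{M}_{2k}+\dim\overline{M'}_{2k}=\dim|U|_{\geq 2k}+\dim|U'|_{\geq 2k}
\]
for every $k$.  To turn this sum identity into the two separate equalities $\dim\overline{M}_{2k}=\dim|U|_{\geq 2k}$ and $\dim\overline{M'}_{2k}=\dim|U'|_{\geq 2k}$, the plan is to establish a one-sided bound $\dim\overline{M}_{2k}\leq \dim|U|_{\geq 2k}$ for any wide sphere $(M,U,\gamma)$ satisfying Condition~(2); applying this bound to both $Y$ and $Y'$ then forces equality via the sum identity.

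The one-sided inequality is the main obstacle.  I would attempt it by contrapositive: if some $c^k m\in \overline{M}_{2k}$ had nonzero content in a slot $U_{2n}\subseteq|U|_0$ with $n<k$, multiplying $m$ by a suitable positive power of $c$ and peeling off terms should exhibit a nonzero element of $U\cap cM$, contradicting~(2).  The delicate point is that $\gamma(m)$ typically mixes positive and negative powers of $c$, so the putative element of $U$ is entangled with contributions from other slots.  I expect the argument to require an inductive peeling indexed by the slot degrees, exploiting both the freeness of $M$ as a $\Q[c]$-module (a consequence of $\beta$ being injective and becoming invertible after inverting $c$) and the strengthened observation that Condition~(2) implies $U\cap c^{r}M=0$ for every $r\geq 1$.
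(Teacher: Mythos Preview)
Your overall architecture matches the paper exactly: split $X=Y\oplus Y'$, pass Condition~(2) to each summand directly, then deduce Condition~(1) from a one-sided inequality together with the additivity $\dim\overline{M}_{2k}+\dim\overline{M'}_{2k}=\dim|U|_{\geq 2k}+\dim|U'|_{\geq 2k}$.  The gap is that you are aiming for the wrong inequality, and the one you state is actually false.

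Concretely, take $U=\Q_0\oplus\Q_2\oplus\Q_4$, write $e_0,e_2,e_4$ for the corresponding basis of $|U|_0$, and set $\overline{M}_{2k}=0$ for $k\geq 3$, $\overline{M}_4=\overline{M}_2=\langle e_0+e_4,\;e_2+e_4\rangle$, and $\overline{M}_{2k}=|U|_0$ for $k\leq 0$.  This is a legitimate wide sphere and one checks directly that $U_4\cap\overline{M}_6=U_2\cap\overline{M}_4=U_0\cap\overline{M}_2=0$, so Condition~(2) holds; yet $\dim\overline{M}_4=2>1=\dim|U|_{\geq 4}$.  Your peeling heuristic fails here because the element $e_0+e_4\in\overline{M}_4$ has nonzero $U_0$-component, but that component $e_0$ by itself does \emph{not} lie in any $\overline{M}_{2k}$ with $k\geq 1$, so no contradiction with $U\cap cM=0$ is produced.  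The paper instead asserts the opposite inequality $\dim\Nb_i\geq\dim|V|_{\geq i}$ as the consequence of Condition~(ii); with that direction in hand for both summands, your sum argument finishes the proof exactly as you wrote it.
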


\begin{proof}
It is immediate that Conditon \ref{cond:thickS} (ii) is inherited by
retracts. We also note that Conditon \ref{cond:thickS} (ii) implies one of the
inequalities  for Conditon \ref{cond:thickS} (i):
$$\dim (\Nb_{i})\geq \dim (|V|_{\geq i}). $$

Now suppose $X=X'\oplus X''$ and that $X$ satisfies Condition
\ref{cond:thickS}. As observed already,  $X'$ and $X''$ both satisfy the second condition, and hence both 
satisfy the first condition with $=$ replaced by $\geq $. With lower
case letters denoting dimensions of vector spaces (for example
$n_a=\dim (N_a)$), this means we 
have a pair of  increasing sequences 
$$0=n'_a, n'_{a-2},n'_{a-4},  \cdots \mbox{ and }
0=v'_{\geq a}, v'_{\geq a-2},v'_{\geq a-4} \cdots $$
reaching $v'$
and a pair of increasing sequences
$$0=n''_a, n''_{a-2},n''_{a-4},  \cdots \mbox{ and }
0=v''_{\geq a}, v''_{\geq a-2},v''_{\geq a-4}\cdots $$
reaching $v''$. 
Since $X$ satisfies Condition \ref{cond:thickS}(i), the sum of the
first pair and the second pair give two equal sequences (i.e.,  the
sequence $n'_i+n''_i=n_i$ and the sequence $v'_{\geq i}+v''_{\geq
  i}=v_{\geq  i}$ are equal). Thus if one pair deviates from equality in the positive
direction, the other deviates in the negative direction. Since
Condition \ref{cond:thickS}(ii) shows there is no negative deviation,
we must have equality for both pairs throughout. 
\end{proof}

It is useful to be able to consider the changes of dimension and form
the  generating function. In fact to any wide sphere, we may associate to it two Laurent polynomials
\begin{itemize}
\item The geometric $T$-fixed point polynomial
$$p_{\T}(t)=\sum_i\dim_{\Q} (V_i)t^i$$
\item The 1-Borel jump polynomal 
$$p_1(t)=\sum_i\dim_{\Q} (N_i/cN_{i+2})t^i$$
\end{itemize}
Condition \ref{cond:thickS}(i) is then equivalent to the condition 
$$p_{\T}(t)=p_1(t). $$

\begin{remark}
We note that Condition \ref{cond:thickS}(i)  is not closed under passage to
retracts. Indeed, $S^z\vee S^{2-z}$ satisfies the first condition with 
$p_1(t)=p_\T(t)=t^2+1. $
However  $S^z$ (with $p_\T(t)=1$ and $p_1(T)=t^2$)
 and $S^{2-z}$ (with $p_\T(t)=t^2$ and $p_1(T)=1$) do not.
  \end{remark}

\subsection{Attaching a $\protect \T$-fixed sphere}

To start with,   $S^0=(\Q [c]\lra \Q [c, c^{-1}]\tensor \Q)$ and then
direct sums of these model wedges of $\T$-fixed spheres with  $N=\Q [c]\tensor V$, and of course
it is easy to see that Condition \ref{cond:thickS}  holds for these. 

However $N$ does not always sit 
so simply inside $\Q[c,c^{-1}]\tensor V$  for the objects built from
$S^0$. We may see this in a simple example. 

\begin{example}
Up to equivalence there are precisely three wide spheres with
$p_1(t)=p_\T(t)=1+t^2$. Evidently in all cases $V=\Q\oplus \Sigma^2 \Q$, $\Nb_{2k}=0 $ for $k\geq 4$ and $\Nb_{2k}=|V|$ for $k\leq 0$. The
only question is how the 1-dimensional space $\Nb_2$ sits inside $|V|=
V_0\oplus V_2$. The three cases are $N_2=V_0$ (which is $S^z\vee
S^{2-z}$),  $N_2=V_2$ (which is $S^0\vee S^2$), and the third case
(giving just one isomorphism type)  in which $N_2$ is a 1-dimensional
subspace not equal to $V_0$ or $V_1$.

We note that the third example is the mapping cone $M_f$ for any
non-trivial map $f: S^1\lra S^0$ (in the semifree category,  there is
only one up to multiplication by a non-zero scalar). In this case up to
isomorphism,  $N_2$ is
generated by  $c^{-1}\tensor \iota_0+c^0\tensor \iota_2$. 

We observe then that the second and third of these three are in
$\thick (S^0)$, and we see that the first does not satisfy Condition
\ref{cond:thickS} (ii). 
\end{example}

\begin{lemma}
\label{lem:condaddcell}
Given a cofibre sequence, 
$$S^n \stackrel{f}\lra X\lra Y,  $$
if $X$ is a wide sphere then so is $Y$ and if $X$ in addition satisfies Condition \ref{cond:thickS} then so does $Y$.
\end{lemma}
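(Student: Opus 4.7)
The strategy is to compute $Y$ entirely inside the algebraic model. By formality of $d\cA_{sf}(\T)$, any map $f:S^n\lra X$ in the derived category is represented by a morphism in $\cA_{sf}(\T)$; unpacking the defining commutative square, this is the data of an element $v\in (V_X)_n$ together with a lift $\tilde v\in (N_X)_n$ satisfying $\beta_X(\tilde v) = v$ (viewed in the $1\tensor V_X$ summand of $(\Q[c,c^{-1}]\tensor V_X)_n$). If $f=0$ then $Y\simeq X\vee S^{n+1}$, and both conclusions are immediate: a wedge of wide spheres is a wide sphere, Condition \ref{cond:thickS} is additive under wedge (as already used in Lemma \ref{lem:condretract}), and $S^{n+1}$ plainly satisfies it.

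Assume $f\neq 0$, so $v\neq 0$. Then $f_V$ is injective (its source is $1$-dimensional), and because $\beta_X$ is injective the module $N_X$ is $c$-torsion-free, so $f_N:\Sigma^n\Q[c]\lra N_X$ is also injective. Hence $f$ is a monomorphism in $\cA_{sf}(\T)$, and formality identifies the cofibre with the abelian-category cokernel: explicitly, $V_Y = V_X/\Q\cdot v$ and $N_Y = N_X/\Q[c]\cdot\tilde v$. Smallness of $Y$ is automatic, so the wide-sphere claim reduces to showing $\beta_Y$ is injective. Given $m\in N_X$ with $\beta_X(m)\in \Q[c,c^{-1}]\cdot v$, write $\beta_X(m) = \alpha v$ with $\alpha$ of appropriate internal degree. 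When $\alpha\in\Q[c]$ the element $\alpha\tilde v\in\Q[c]\cdot\tilde v$ has the same $\beta_X$-image, so injectivity of $\beta_X$ forces $m=\alpha\tilde v$ and $m$ dies in $N_Y$. When $\alpha = rc^{-j}$ with $j\geq 1$, the element $c^{j-1}m\in N_X$ has $\beta_X$-image $rc^{-1}v$, exhibiting $rv\in V_X\cap cN_X$; Condition \ref{cond:thickS}(ii) on $X$ then forces $r=0$, whence $m=0$.

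Preservation of Condition \ref{cond:thickS} is now bookkeeping. For (i), quotienting by $\Q\cdot v$ drops $\dim|V_Y|_{\geq i}$ by one exactly for $i\leq n$, and quotienting by $\Q[c]\cdot\tilde v$ drops $\dim\overline{N}^Y_i$ by one in the same range (since $\Q[c]\cdot\tilde v$ is $\Q$-$1$-dimensional in each internal degree $\leq n$ and zero above), so the two sequences stay equal. For (ii), a hypothetical nonzero element of $V_Y\cap cN_Y$ lifts to an element $w\in V_X$ congruent modulo $\Q[c]\cdot\tilde v$ to $cm'$ for some $m'\in N_X$; expanding and using $\beta_X(\tilde v)=v$ produces a nonzero element of $V_X\cap cN_X$, contradicting (ii) for $X$. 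The principal obstacle is the ``negative-$c$'' case in the injectivity argument for $\beta_Y$: without Condition (ii) on $X$ one could in principle manufacture $c$-torsion in $N_Y$, so the hypothesis (ii) is essential there and not merely used for the Condition-preservation part. The odd-parity case $n=2k+1$ is handled identically, with the $|V|_1$-filtration playing the role of $|V|_0$.
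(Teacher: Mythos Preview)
Your central claim---that formality lets you represent every derived map $f:S^n\to X$ by a morphism in $\cA_{sf}(\T)$---is false, and the paper's own Example immediately preceding the Lemma is a counterexample. There the nonzero map $f:S^1\to S^0$ has mapping cone the ``third example'' (with $N_2$ spanned by $c^{-1}\otimes\iota_0+c^0\otimes\iota_2$), which is \emph{not} $S^0\vee S^2$. But $\Hom_{\cA_{sf}(\T)}(S^1,S^0)=0$: any $\phi:\Sigma\Q\to\Q$ vanishes by parity, and then injectivity of $\beta_{S^0}$ forces $\theta=0$. Formality identifies objects with their homology; it does \emph{not} say that derived maps are abelian-category maps. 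Since $\cA_{sf}(\T)$ has injective dimension~1, there is an Adams short exact sequence with an $\Ext^1$ term contributing to $[S^n,X]^\T$, and this $f$ lives entirely in that term. Your dichotomy ``$f=0$ or $v\neq 0$'' therefore misses precisely the maps whose Hom-component is zero but whose $\Ext^1$-component is not; for such $f$ the cofibre $Y$ is a nontrivial extension $0\to\pi^\cA_*(X)\to\pi^\cA_*(Y)\to\pi^\cA_*(S^{n+1})\to 0$ in $\cA_{sf}(\T)$, neither a wedge nor an abelian cokernel.

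The paper avoids this trap by separating into parity cases. When $n$ and $X$ have opposite parity the Hom-term vanishes and one checks the Conditions directly for the resulting extension (this is where the paper's sentence ``$\lambda\iota\not\in c\Q[c]$'' does real work). When $n$ and $X$ have the same parity the $\Ext^1$ term vanishes for parity reasons, so $f$ \emph{is} an abelian morphism; the paper then shows (using Condition~(ii)) that any nonzero such $f$ is split, whence $Y$ is a retract of $X$ and Lemma~\ref{lem:condretract} applies. Your cokernel computation is a legitimate alternative in that same-parity case, but it does not cover the mixed-parity situation at all; your final sentence about ``the odd-parity case $n=2k+1$'' only swaps the roles of $|V|_0$ and $|V|_1$ and still assumes $f$ is an abelian morphism.
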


\begin{proof}
Suppose first that $X$ is entirely in one parity. Without loss of
generality, we may suppose $X$ is in even degrees. 

If $n$ is odd then $\piA_*(S^n)$ is purely odd and we have a short
exact sequence
$$0\lra \piA_*(X)\lra \piA_*(Y)\lra  \piA_*(S^{n+1})\lra 0. $$
It follows that $Y$ is a wide sphere (i.e., the basing map is
injective). 
The condition on dimensions is immediate, since this is split as
vector spaces. For the second condition, we know that any element 
$(v,\lambda \iota ) \in V_Y\cap cN_Y$ with $v\in V_X$ must have $\lambda
\neq 0$ since $X$ satisfies the condition. However $\lambda \iota \not
\in c\Q [c]$. Altogether,   $Y$ satisfies
Condition \ref{cond:thickS}.  

Alternatively,  suppose $n$ is even. To calculate $[S^n, X]^\T_*$ we
take an injective resolution of $X$. We argue that  this takes the form
$$ 0\lra X \lra e(V) \lra f(\Sigma^2V\tensor k[c]^{\vee})\lra 0. $$
To start with,  since $X$ is a wide sphere,  $X$
embeds in $e(V)$. The cokernel is zero at $\T$ and hence of the form
$f(T)$ for some torsion $\Q [c]$-module $T$. At  $1$ the cokernel is $(\Q
[c,c^{-1}]\tensor V)/N$; since this is  divisible 
it is a sum of copies of $\Q[c]^{\vee}$. Finally, in view of Condition
\ref{cond:thickS} (i) $T=\Sigma^2 V\tensor \Q[c]^{\vee}$ as claimed.

Now we may use the Adams spectral sequence to see that $[S^n,
X]^\T_0=V_n$. If the original  map $f$ is trivial, then 
 $\piA_*(Y)=\piA_*(X)\oplus \piA_*(S^{n+1})$, and the result is 
again clear. Otherwise we have a diagram
$$\diagram 
S^n \rto^f &X\\
\Sigma^n\Q[c]\rto^{\theta} \dto & N\dto \\
\Q[c,c^{-1}]\tensor \Sigma^n\Q\rto^{1\tensor \phi} & \Q [c,c^{-1}]\tensor V 
\enddiagram$$
This shows that since $\phi $ is mono then also $\theta$ is mono and
furthermore, by Condtion \ref{cond:thickS}(ii), $\theta$ is the
inclusion of a summand. It follows that the map $f$ is split. Indeed,
splittings of $\phi$ are given by codimension 1 free summands $N'$ of
$N$. This automatically has $\Nb'$ of codimension 1 in $|V|$. 
We need only choose $N'$  so that $\Nb'$ avoids $\theta
(\Sigma^n\Q)$. This gives a compatible splitting of $\phi$. 
It follows that in fact $Y$ is a retract of $X$, and the result
follows from Lemma \ref{lem:condretract}. 

Finally if $X$ has components in both even and odd degrees, then $X\simeq X_{ev}\vee X_{od}$ and we may argue as follows. Without
loss of generality we suppose $n$ is even. 
If $f$ maps purely into $X_{ev}$ or purely into $X_{od}$ the other factor is irrelevant and the above argument deals with this case. 
Otherwise $f$ has components mapping into both $X_{ev}$ and
$X_{od}$. The above argument shows that $\piA_*(Y)$ is a retract in
even degrees and it is unaltered in odd degrees.
\end{proof}

\subsection{Spectra built from $\protect \T$-fixed spheres}
We have now done the main work and can identify the thick subcategory
generated by $S^0$.

\begin{cor}
\label{cor:thickS}
The thick subcategory generated by $S^0$ consists of wide spheres
satisfying Condition \ref{cond:thickS}. 
\end{cor}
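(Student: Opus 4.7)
The plan is to prove both inclusions. For the easy direction, that every object of $\thick(S^0)$ is a wide sphere satisfying Condition \ref{cond:thickS}: one checks directly that $S^0 = (\Q[c] \to \Q[c,c^{-1}])$ satisfies both conditions, with $V = \Q$ in degree $0$ and $N = \Q[c]$. Any object of $\thick(S^0)$ is a retract of a finite iterated mapping cone built from shifted spheres $S^n$. Thus it suffices to show that the class of wide spheres satisfying Condition \ref{cond:thickS} is closed under (a) retracts and (b) cofibres of maps $S^n \to X$ with $X$ already of this form; these are exactly Lemmas \ref{lem:condretract} and \ref{lem:condaddcell}, applied inductively along the attaching sequence.

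For the reverse direction, I would argue by induction on $\dim V$. In the base case $V = 0$, injectivity of $\beta$ forces $N = 0$, so $X \simeq 0 \in \thick(S^0)$. For the inductive step, let $2B$ be the top degree in which the even part of $V$ is nonzero (the odd case is handled symmetrically). The key algebraic point is that Conditions (i) and (ii) together guarantee that the top-degree contribution $\Nb_{2B}$ projects isomorphically onto $V_{2B}$ modulo the strictly lower filtration layers; this allows one to construct a well-defined surjection $p \colon X \to C$ in $\cA_{sf}(\T)$, where $C$ is a wedge of $\dim V_{2B}$ copies of $S^{2B}$, realising projection onto the ``top cell''. Setting $Y$ to be the fibre of $p$ gives a cofibre sequence $Y \to X \to C$. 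Since $C \in \thick(S^0)$ automatically, it suffices to show $Y \in \thick(S^0)$. For this I would check that $Y$ is again a wide sphere, that $\dim V_Y = \dim V - \dim V_{2B}$ is strictly smaller, and that $Y$ inherits Condition \ref{cond:thickS}; the inductive hypothesis then completes the argument.

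The main obstacle is the structural verification that the fibre $Y$ is a wide sphere satisfying Condition \ref{cond:thickS}. Condition (ii) for $Y$ follows from Condition (ii) for $X$ by restriction to the subspaces $V_Y \subseteq V$ and $N_Y \subseteq N$, but Condition (i) is more delicate: one must confirm that the filtration dimensions $\dim \Nb_{Y,i}$ match $\dim |V_Y|_{\geq i}$ exactly. This reduces to showing that the projection $\Nb_{X,2k} \to V_{2B}$ along $V_{<2B}$ is surjective for every $k \leq B$, which follows from $\Nb_{X,2k} \supseteq \Nb_{X,2B}$ combined with the top-degree isomorphism already used to construct $p$. The remaining bookkeeping is to treat the odd-degree part of $V$ symmetrically, which can be done by peeling off the overall top degree at each inductive step regardless of parity.
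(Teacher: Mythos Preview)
Your forward direction agrees with the paper's: both invoke Lemmas \ref{lem:condretract} and \ref{lem:condaddcell} together with the standard description of a thick subcategory as retracts of iterated cones on suspensions of the generator.

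The reverse direction, however, contains a genuine gap. You peel off the \emph{top} degree by asserting that Conditions (i) and (ii) force $\Nb_{2B}$ to project isomorphically onto $V_{2B}$ along $|V|_{<2B}$. This is false. Condition (ii), read in $|V|_0$, says only that $V_{2k}\cap \Nb_{2k+2}=0$ for each $k$ \emph{separately}; it does not prevent $\Nb_{2B}$ from lying entirely inside $|V|_{<2B}$. Concretely, take $V$ one-dimensional in each of degrees $0,2,4$ with basis $e_0,e_2,e_4$, and set
\[
\Nb_6=0,\qquad \Nb_4=\langle e_0+e_2\rangle,\qquad \Nb_2=\langle e_0+e_2,\; e_4\rangle,\qquad \Nb_{\leq 0}=|V|_0.
\]
Both conditions hold (the dimensions match, and $V_0\cap\Nb_2=V_2\cap\Nb_4=V_4\cap\Nb_6=0$), yet $\Nb_4$ projects to zero in $V_4$. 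Consequently there is no surjection $X\to S^4$ in $\cA_{sf}(\T)$: commutativity with the basing maps forces the $N$-component of any such map to vanish in degree $4$. Your map $p$ cannot be constructed, and the induction does not get off the ground.

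The paper avoids this by working from the \emph{bottom}. At the smallest degree $n$ with $V_n\neq 0$ one has $\dim\Nb_n=\dim|V|_0$, so $\Nb_n=|V|_0$; any nonzero $v\in V_n$ then lies in $N_n$ and (by Condition (ii)) satisfies $v\notin \Nb_{n+2}$, yielding a map $S^n\to X$ whose quotient is again a wide sphere with smaller $V$. The asymmetry between top and bottom is intrinsic: Condition (ii) governs how each $V_{2k}$ sits relative to the \emph{next} filtration step $\Nb_{2k+2}$, which is precisely what one needs to split off a bottom cell, but it gives no control over the position of $\Nb_{2B}$ relative to $V_{2B}$.
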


\begin{proof}
First, we observe that the thick subcategory $\thick (S^0)$ can be
constructed
by alternating the attachment of $\T$-fixed spheres and taking
retracts; the fact that any element of $\thick (S^0)$ is a wide sphere
satisfying Condition \ref{cond:thickS} then follows from Lemmas
\ref{lem:condretract} and \ref{lem:condaddcell}. The point is that 
we must show that if  we construct $Z$ using a cofibre sequence $X\lra
Y \lra Z$ with $X,Y$ in the thick subcategory then $Z$ may be
constructed from $X$ by using the two processes. Formally, we are
applying induction on the number of cells, so we may suppose $Y$ is
constructed from the two processes. If $X$ is formed by attaching spheres, we may form $Z$ from
$Y$ by attaching the corresponding spheres. If $X$ is a retract of
$X'$ formed from spheres  then $f: X\lra Y$ extends over $X'=X\vee
X''$ by using $0$ on the second factor and  then $Z$ is a retract of $Z'$. 

Now  we show that any wide sphere satisfying Condition \ref{cond:thickS} is in the
thick subcategory generated by $S^0$. 
We argue by induction on the dimension of $|V|$. The result is obvious if $V=0$. 
Suppose that $X$ is a wide sphere satisfying the given condition and
that the result is proved when the geometric $\T$-fixed points have lower dimension. 

Note that if $t^n$ is the smallest degree in which $p_\T(t) $ is non-zero we may choose a vector $v\in V_n\setminus \Nb_{n+2}$. Accordingly
$X$  has a direct summand $\Q[c]\tensor v\lra \Q [c,c^{-1}]\tensor v$, which corresponds to a map 
$S^n\lra X$. Since $v\not \in \Nb_{n+2}$, the  quotient $Y$ again has injective basing map and obviously satisfies the polynomial condition. Since $n$ is the smallest
degree in which $V$ is non-zero, $v$,  the direct summand
$\Q \cdot v$ may be removed from $|V|$ without affecting the filtration condition. By induction $Y\in \thick (S^0)$, and hence $X\in \thick (S^0)$ as  required. 
\end{proof}

\subsection{Spectra built from representation spheres}
Since smashing with any sphere $S^{kz}$ is invertible, this allows us
to deduce the thick subcategory generated by any sphere. 
\begin{cor}
The thick subcatgory generated by $S^{kz}$ consists of 
wide spheres which are $k$-twisted in the sense that 
\begin{enumerate}
\item  $p_1(t)=t^{-2k}p_\T(t)$.
\item $V\cap c^{k+1}N=0$
\end{enumerate}
\end{cor}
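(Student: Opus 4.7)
The plan is to reduce the corollary to Corollary \ref{cor:thickS} by exploiting the fact that $S^{kz}$ is an invertible object in the tensor-triangulated category of semifree $\T$-spectra, with inverse $S^{-kz}$. Consequently smashing with $S^{kz}$ is a tensor-triangulated autoequivalence, and it carries the thick subcategory generated by $S^0$ onto the thick subcategory generated by $S^0 \sm S^{kz} = S^{kz}$. Thus every object of $\thick(S^{kz})$ has the form $Y = S^{kz} \sm X$ with $X$ in $\thick(S^0)$, and by Corollary \ref{cor:thickS} such $X$ is precisely a wide sphere satisfying Condition \ref{cond:thickS}. It then suffices to translate that condition for $X$ into the $k$-twisted conditions (i)--(ii) for $Y$.

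To do this I would first record the algebraic description of $S^{kz}$ itself: it is a wide sphere with $V \cong \Q$ concentrated in degree $0$ and $N$ a free $\Q[c]$-module of rank one, sitting inside $\Q[c,c^{-1}]$ as $c^{-k}\cdot\Q[c]$. Next I would verify the elementary compatibility of the symmetric monoidal structure on $\cA_{sf}(\T)$ with smashing by a representation sphere: for $Y = S^{kz}\sm X$ one has $V_Y = V_X$ (since $\Phi^{\T}S^{kz}\simeq S^0$), while $N_Y = c^{-k}\cdot N_X$ as $\Q[c]$-submodules of $\Q[c,c^{-1}]\tensor V_X$. Note that this preserves the wide-sphere hypotheses, since $V_Y$ is still finite dimensional, $N_Y$ is still bounded above, and the basing map is obtained from that of $X$ by a shift.

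Given this, the translation of the conditions is a direct calculation. The Euler polynomial is unchanged, $p_{\T}^Y(t) = p_{\T}^X(t)$, while shifting $N_X$ by $c^{-k}$ shifts its jump polynomial by $t^{-2k}$, giving $p_1^Y(t) = t^{-2k}p_1^X(t)$; hence Condition \ref{cond:thickS}(i) for $X$ ($p_1^X = p_\T^X$) matches the first condition for $Y$. Similarly $c^{k+1}N_Y = c\cdot N_X$, so Condition \ref{cond:thickS}(ii) for $X$ reads $V_X \cap cN_X = 0$, which is precisely $V_Y \cap c^{k+1}N_Y = 0$, the second condition for $Y$. The converse is symmetric: if $Y$ is $k$-twisted then $S^{-kz}\sm Y$ is untwisted by running the same computation backwards, so lies in $\thick(S^0)$ by Corollary \ref{cor:thickS}, and hence $Y \in \thick(S^{kz})$. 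The main obstacle is the careful bookkeeping of the symmetric monoidal structure on the algebraic model $\cA_{sf}(\T)$ from \cite{s1q} to justify the formula $N_Y = c^{-k}\cdot N_X$; once that compatibility is pinned down, everything else follows automatically from invertibility of $S^{kz}$ and the previous corollary.
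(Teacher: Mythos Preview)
Your proposal is correct and matches the paper's approach exactly: the paper also deduces the result directly from Corollary \ref{cor:thickS} via the invertibility of $S^{kz}$ and the formulae $V(S^{kz}\sm X)=V(X)$ and $N(S^{kz}\sm X)=c^{-k}N(X)$. Your write-up is simply a more detailed expansion of the paper's two-line proof.
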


\begin{proof}
This is immediate from Corollary \ref{cor:thickS} and  the observations
$$V(S^{kz}\sm X)=V(X) \mbox{ and } N(S^{kz}\sm X)=c^{-k}N(X).$$
\end{proof}


\begin{thebibliography}{999}
\bibitem{Balmer1} P.Balmer
`The spectrum of prime ideals in tensor triangulated categories.' 
J. Reine Angew. Math. 588 (2005), 149-168. 
\bibitem{qgtoralq} D.J.Barnes, J.P.C.Greenlees and M.Kedziorek ``An algebraic model for rational toral spectra.''
In preparation. 
\bibitem{BGKS} D.J.Barnes, J.P.C.Greenlees,  M.Kedziorek and
  B.E.Shipley ``A monoidal model for rational $SO(2)$-spectra.''
AGT {\bf 17} (2017) 983-1020, arxiv:1511.03291 
\bibitem{tD} T. tom Dieck ``The Burnside ring of a compact Lie
  group.''
Math. Ann. {\bf 215} (1975) 235-250
\bibitem{assiet} J.P.C.Greenlees 
   ``Adams spectral sequences in equivariant topology.'' 
    (Cambridge University Thesis, 1985)
\bibitem{EFQ} 
  J.P.C.Greenlees 
  ``A rational splitting theorem for the universal space for  almost 
  free actions.'' 
  Bull. London Math. Soc. 
  {\bf 28} (1996) 183-189.
\bibitem{ratmack}
  J.P.C.Greenlees 
  ``Rational Mackey functors for compact Lie groups I' 
  Proc. London Math. Soc {\bf 76} (1998) 549-578
\bibitem{o2q} J.P.C.Greenlees 
  ``Rational $O(2)$-equivariant cohomology theories.'' 
  Fields Institute Communications {\bf 19} (1998) 103-110
\bibitem{s1q} J.P.C.Greenlees 
  ``Rational $S^1$-equivariant stable homotopy theory.''
  Mem. American Math. Soc. (1999) xii+289 pp.
\bibitem{so3q} J.P.C.Greenlees 
  ``Rational $SO(3)$-equivariant cohomology theories.'' 
   Contemporary Maths. {\bf 271}, American Math. Soc.
   (2001) 99-125
\bibitem{tnq1} J.P.C.Greenlees 
      ``Rational torus-equivariant stable homotopy I:
    calculating groups of stable maps.''
    JPAA {\bf 212} (2008) 72-98
    (http://dx.doi.org/10.1016/j.jpaa.2007.05.010), arXiv:0705.2686 
\bibitem{tnq2} J.P.C.Greenlees 
      ``Rational torus-equivariant stable homotopy II:
 the algebra of localization and inflation.''
JPAA {\bf 216} (2012) 2141-2158, arXiv:1108.4868
\bibitem{AGs} J.P.C.Greenlees 
 ``Rational torus-equivariant stable homotopy  III:
comparison of models.''
JPAA (to appear) 34pp arXiv:1410.5464
\bibitem{AGtoral} 
J.P.C.Greenlees 
 ``Rational equivariant cohomology theories with toral support''
Algebraic and Geometric Topology {\bf 16} (2016) 1953-2019,
arXiv:1501.03425 
\bibitem{spctq} 
J.P.C.Greenlees 
 ``Rational torus-equivariant stable homotopy IV: thick tensor ideals
 and the Balmer spectrum for finite spectra''
arXiv   1612.01741,  28pp 
\bibitem{Tate} J.P.C.Greenlees and J.P. May
  ``Generalized Tate cohomology.''
   Mem. Amer. Math. Soc. 113 (1995), no. 543, viii+178 pp. 
\bibitem{gfreeq2} J.P.C.Greenlees and B.E. Shipley 
J.P.C.Greenlees and B.E.Shipley 
``An algebraic model for free rational $G$-spectra.''
Bull. LMS {\bf 46} (2014) 133-142, DOI 10.1112/blms/bdt066, arXiv:1101.4818
\bibitem{tnqcore} J.P.C.Greenlees and B.E. Shipley 
   ``An algebraic model for rational torus-equivariant spectra.''
    Preprint (2014) (Submitted for publication) 76pp  arXiv: 1101.2511
\bibitem{Hopkins} M.J.Hopkins ``Global methods in homotopy theory''
LMS Lecture Notes {\bf 117} (1987) 73-96
\bibitem{HHR} M.Hill, M.J.Hopkins and D.C.Ravenel ``On the non-existence of elements of Kervaire invariant 1.''
Annals of Maths. {\bf 184} (2016) 1-262 
\bibitem{lmsm} L.G.Lewis, J.P.May and M.Steinberger (with contributions
by J.E.McClure) ``Equivariant stable homotopy theory''
Lecture notes in mathematics {\bf 1213}, Springer-Verlag (1986) 
\bibitem{Neeman} A. Neeman
``The Grothendieck duality theorem via Bousfield’s techniques and
Brown representability''
J. Amer. Math. Soc. {\bf 9} (1996), no. 1, 205-236.
\bibitem{SS} S.Schwede and B.E.Shipley 
  ``Stable model categories are categories of modules.''
    Topology {\bf 42} (2003), no. 1, 103--153.
\bibitem{Stevenson} G.Stevenson
``Support theory via actions of tensor-triangulated categories''
J. Reine Angew. Math. 681 (2013) 219-254
\end{thebibliography}
\end{document}